\newtheorem{theorem}{Theorem}[section]
\newtheorem{lemma}[theorem]{Lemma}
\newtheorem{proposition}[theorem]{Proposition}
\newtheorem{corollary}[theorem]{Corollary}
\theoremstyle{definition}
\newtheorem{example}[theorem]{Example}
\newtheorem{remark}[theorem]{Remark}
\newcommand{\excise}[1]{}
\newcommand{\Spec}{\operatorname{Spec}}
\newcommand{\Ext}{\operatorname{Ext}}
\newcommand{\id}{\operatorname{id}}
\renewcommand{\dim}{\operatorname{dim}}
\newcommand{\crk}{\operatorname{crk}}
\renewcommand{\and}{\qquad\text{and}\qquad}
\newcommand{\Hom}{\operatorname{Hom}}
\newcommand{\bigmid}{\;\big{|}\;}
\newcommand{\Bigmid}{\;\Big{|}\;}
\newcommand{\Edge}{\operatorname{Edge}}
\renewcommand{\Vert}{\operatorname{Vert}}
\newcommand{\uk}{\underline{k}}
\newcommand{\gr}{\operatorname{gr}}
\newcommand{\Z}{\mathbb{Z}}
\newcommand{\N}{\mathbb{N}}
\newcommand{\R}{\mathbb{R}}
\newcommand{\C}{{\mathcal{C}}}
\newcommand{\IH}{\operatorname{IH}}
\newcommand{\cG}{\mathcal{G}}
\newcommand{\cGgred}{\cG_{g,\operatorname{red}}}
\newcommand{\cGg}{\cG_{g}}
\newcommand{\cC}{\mathcal{C}}
\newcommand{\cRT}{\mathcal{RT}}
\newcommand{\cPT}{\mathcal{PT}}
\newcommand{\cPGgop}{\mathcal{PG}_g^\op}
\newcommand{\cRGgop}{\mathcal{RG}_g^\op}
\newcommand{\cPGg}{\mathcal{PG}_{\!g}}
\newcommand{\cO}{\mathcal{O}}
\newcommand{\cOg}{\cO_g}
\newcommand{\cOgsm}{\cO_g^{\operatorname{small}}}
\newcommand{\cOgti}{\cO_g^{\operatorname{tiny}}}
\newcommand{\OS}{\operatorname{OS}}
\newcommand{\Rep}{\operatorname{Rep}}
\newcommand{\Aut}{\operatorname{Aut}}
\newcommand{\cGgop}{\cGg^{\op}}
\newcommand{\cGopg}{\cGg^{\op}}
\newcommand{\cGop}{\mathcal{G}^{\op}}
\newcommand{\op}{{\operatorname{op}}}
\renewcommand{\Vert}{\operatorname{Vert}}
\newcommand{\Out}{\operatorname{Out}}
\newcommand{\Mor}{\operatorname{Mor}}
\newcommand{\ue}{\underline{e}}
\newcommand{\uf}{\underline{f}}
\newcommand{\um}{\underline{m}}
\newcommand{\un}{\underline{n}}
\newcommand{\uv}{\underline{v}}
\newcommand{\OIr}{\operatorname{OI}^r}
\newcommand{\UConf}{\operatorname{UConf}}
\newcommand{\Sw}{\widetilde{S}}
\newcommand{\tS}{\widetilde{S}}
\newcommand{\ith}{i^{\operatorname{th}}}
\newcommand{\banana}{
      \begin{sideways}$\hspace{-1pt}\ominus$\end{sideways}}
\DeclareMathOperator{\FI}{FI}
\DeclareMathOperator{\OI}{OI}
\newcommand{\PTi}{\mathbf{PT}}
\newcommand{\RTi}{\mathbf{RT}}
\begin{document}
\spacing{1.2}
\noindent{\LARGE\bf The contraction category of graphs}\\

\noindent{\bf Nicholas Proudfoot and Eric Ramos}\\
Department of Mathematics, University of Oregon,
Eugene, OR 97403\\

{\small
\begin{quote}
\noindent {\em Abstract.}
We study the category whose objects are graphs of fixed genus and whose morphisms
are contractions.  We show that the corresponding contravariant module categories are Noetherian
and we study two families of modules over these categories.  
The first takes a graph to a graded piece of the homology
of its unordered configuration space and the second takes a graph to an intersection homology group whose dimension
is given by a Kazhdan--Lusztig coefficient; in both cases we prove that the module is finitely generated.
This allows us to draw conclusions about 
torsion in the homology groups of graph configuration spaces, and about the growth of Betti numbers
of graph configuration spaces and Kazhdan--Lusztig coefficients of graphical matroids.
We also explore the relationship between our category and outer space, which is used in the study of outer
automorphisms of free groups.
\end{quote} }

\section{Introduction}
We are interested in ways of assigning a vector space or abelian group to a graph that are contravariantly functorial with respect to contractions of graphs.
A contraction, which is defined precisely in Section \ref{sec:main defs}, preserves the genus (first Betti number) of a graph,
so we consider the category $\cGg$ whose objects are graphs of genus $g$ and whose morphisms are contractions.
For any commutative ring $k$, we define $\Rep_k(\cGgop)$ to be the category of functors from $\cGgop$ to $k$-modules.
An object of this category is called a {\bf \boldmath{$\cGgop$}-module with coefficients in \boldmath{$k$}}.

\subsection{Noetherianity and growth}\label{intro-growth}
For any category $\C$, a module $M\in\Rep_k(\C)$ is called {\bf finitely generated} if 
there exist finitely many objects $x_1,\ldots,x_r$ of $\C$ along with elements $v_i\in M(x_i)$ such that, for any object
$x$ of $\C$, $M(x)$ is spanned over $k$ by the images of the elements $v_i$ along the maps induced by 
all possible morphisms $f_i:x_i\to x$.  If every submodule of a finitely generated module is itself finitely generated, 
the category $\Rep_k(\C)$ is said to be {\bf locally Noetherian}.

Sam and Snowden have developed powerful machinery for proving that module categories are locally Noetherian.
They define what it means for $\C$ to be {\bf quasi-Gr\"obner}, and they show that, if $\C$ is quasi-Gr\"obner, then
$\Rep_k(\C)$ is locally Noetherian for any Noetherian commutative algebra $k$ \cite{sam}.  The most prominent example
of a quasi-Gr\"obner category is the category $\FI$ of finite sets with injections; the fact that $\Rep_k(\FI)$ is locally Noetherian
has been used to prove stability patterns in coinvariant algebras and in the cohomology groups of configuration spaces 
and other moduli spaces \cite{CEF}, in the homology groups of congruence subgroups \cite{Put}, and in the syzygies of Segre embeddings \cite{Sno}.  

In the prequel to this paper, the authors built on work of Barter \cite{Barter} to prove that the opposite category $\cG_0^\op$
of trees with contractions is quasi-Gr\"obner \cite{PR-trees}.  The technical heart of this paper is the extension of this
result to arbitrary genus.

\begin{theorem}\label{noetherian}
For any non-negative integer $g$,
the category $\cGgop$ is quasi-Gr\"obner, and therefore the category $\Rep_k(\cGgop)$ is locally Noetherian
for any Noetherian commutative algebra $k$.
\end{theorem}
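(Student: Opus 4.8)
The plan is to exhibit $\cGgop$ as the target of a functor $\Phi$ from a category that can be recognized as Gr\"obner, with $\Phi$ essentially surjective and satisfying Sam and Snowden's property (F); the theorem then follows from \cite{sam}, bootstrapping off the genus-zero case established in \cite{PR-trees}. I would let $\mathcal{D}_g$ be the category whose objects are a connected graph $G$ of genus $g$ together with a spanning tree $T\subseteq G$, a root and child-ordering making $T$ a planar rooted tree, and a total order on the $g$ edges of $G\smallsetminus T$; a morphism is a contraction of $G$ that restricts on spanning trees to a morphism of planar rooted trees in the sense of \cite{PR-trees} and carries extra edges to extra edges, an extra edge being allowed to become a loop when its endpoints are identified. (Note that, since genus is preserved, a contraction never contracts an extra edge, so the $g$ extra edges are rigid data carried along by every morphism.) Let $\Phi\colon\mathcal{D}_g^{\op}\to\cGgop$ be the functor that forgets all decorations.

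Essential surjectivity of $\Phi$ is immediate, since every connected graph has a spanning tree. For property (F), fix $G\in\cGgop$. As $G$ has finitely many spanning trees, each carrying finitely many rootings, child-orderings, and extra-edge orderings, there are only finitely many decorated graphs lying over $G$; adjoining the finitely many one-edge expansions of $G$ (split a vertex, the reverse of an edge contraction), each with its finitely many admissible decorations, produces a finite list of objects of $\mathcal{D}_g^{\op}$ equipped with morphisms to $G$. Any morphism $\Phi(A)\to G$ in $\cGgop$ --- that is, any contraction of $\Phi(A)$ onto $G$ --- either identifies $\Phi(A)$ with $G$, in which case $A$ is one of the finitely many decorated lifts of $G$, or factors through the contraction of a single last edge, hence through one of the listed one-edge expansions; the genus-zero combinatorics of \cite{PR-trees} ensures that these factorizations can be chosen to respect the tree-ordering conventions built into $\mathcal{D}_g$, so the list is dominating and (F) holds.

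The heart of the matter is showing that $\mathcal{D}_g^{\op}$ is Gr\"obner. I would encode a decorated graph as a planar rooted tree bearing at most $2g$ marked vertices --- the endpoints of the extra edges --- together with the purely finite datum recording which of the $g$ ordered extra edges joins which ordered pair of marked vertices, a pair possibly being a single vertex (a loop). This yields an encoding of the objects and morphisms of $\mathcal{D}_g^{\op}$ by words, obtained from the encoding of planar rooted trees in \cite{PR-trees} by allotting a bounded number of extra symbols for the marks and appending the finite attaching datum, and I would check that the admissible order used there extends to an admissible order here. The genuinely new content --- the reason this is an extension of the genus-zero theorem rather than a restatement of it --- is verifying that introducing the marked vertices and the $g$ extra edges preserves the well-ordering of the relevant morphism sets and the finiteness of the set of leading terms, with particular care for contractions that identify two marked vertices and thereby create loops or parallel edges. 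This is the step I expect to be the main obstacle; essentially all of the technical work lives there.

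Granting that $\mathcal{D}_g^{\op}$ is Gr\"obner, the fact that $\Phi$ is essentially surjective with property (F) lets us invoke \cite{sam} to conclude that $\cGgop$ is quasi-Gr\"obner, whereupon the same reference gives that $\Rep_k(\cGgop)$ is locally Noetherian for every Noetherian commutative ring $k$.
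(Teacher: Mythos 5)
Your proposal follows the same architecture as the paper: pass to a category of graphs decorated by a planar-rooted spanning tree plus data on the $g$ extra edges, prove that category is Gr\"obner by reducing to Barter's theorem on planar rooted trees, and show the forgetful functor to $\cGgop$ is essentially surjective with property (F). That is exactly what the paper does (the paper's $\cPGg$ is your $\mathcal{D}_g$ up to the issue noted below), so the outline is sound. However, there are three genuine gaps.

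First, your objects of $\mathcal{D}_g$ carry a total order on the extra edges but no orientations. If an extra edge is a loop, reversing it fixes the planar rooted spanning tree, the ordering, and the graph, and is \emph{not} homotopic to the identity through very cellular maps (the degree of the restriction to the loop is an invariant). So $\mathcal{D}_g$ has nonidentity automorphisms, and a Gr\"obner category is required to have none. The paper avoids this by rigidifying with a chosen graph isomorphism $\tau: R_g \to G/T$, which in particular orients each extra edge.

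Second, the property (F) argument via one-edge expansions does not work for $g \geq 2$. Given a contraction $\varphi : \Phi(A) \to G$, the canonical decorated factorization contracts exactly the tree edges that $\varphi$ contracts; the residual contraction to $G$ then contracts up to $g$ extra edges, so the intermediate graph can have as many as $|G|+g$ edges. If you insist on factoring through a graph with $|G|+1$ edges, the first factor must already contract some extra edges and hence fails to be a morphism of $\mathcal{D}_g$. The fix is to take, as in the paper's Lemma~\ref{F}, \emph{all} rigidified graphs with at most $|G|+g$ edges and all contractions from them to $G$; one then directly checks the factorization without any induction.

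Third, and most substantially, the Gr\"obner part --- which you correctly flag as where all the work is --- is only gestured at, and the proposed encoding (mark the $\leq 2g$ endpoints $w_1,\ldots,w_{2g}$) is not the right labeling. Under the translation to Barter's order embeddings, the label of a vertex $w'$ of the target pulls back from $\max\varphi^{-1}(w')$, which need not equal $w_j$ even when $\varphi(w_j)=w_j'$; a direct mark on $w_j$ is therefore not transported correctly and fails to detect exactly those tree-contractions that lift to rigidified-graph contractions. The paper instead labels each vertex $w$ by the vector recording, for each $j$, whether $w \geq w_j$ (labels in $S=\{0,1\}^{2g}$), and Lemma~\ref{extra labels} shows that a planar-rooted-tree contraction is compatible with this labeling if and only if it extends to a contraction of rigidified graphs. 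Property (G2) then reduces to a well-quasi-ordering statement for $S$-labeled planar rooted trees, i.e.\ a labeled version of Kruskal's tree theorem (Theorem~\ref{kruskal} and Corollary~\ref{labeled G2}); your sketch never invokes Kruskal-type wqo, which is the actual engine of the proof.
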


Theorem \ref{noetherian} is useful for proving that specific $\cGgop$-modules are finitely generated, and this gives some control
over their dimension growth.  More precisely, we say that a module is {\bf finitely generated in degrees \boldmath{$\leq d$}}
if the objects $x_1,\ldots,x_r$ in the definition of finite generation may be taken to be graphs with at most $d$ edges.
If $k$ is a field and $M$ is finitely generated in degrees $\leq d$, then the dimension of $M(G)$ is constrained by a polynomial of degree $d$
in the number of edges of $G$ (Proposition \ref{bounded growth}).  Furthermore, if we fix a graph and modify it by either
subdividing edges or ``sprouting'' new leaves at a fixed set of vertices, then the dimension of $M$ evaluated on the modified graph
behaves as a polynomial of degree at most $d$ in the subdivision and sprouting parameters (Corollaries \ref{actual polynomial-sub}
and \ref{actual polynomial-spr}).

Sometimes we have no control of the generation degree of a finitely generated module, but we can still control its growth.
We say that $M$ is {\bf \boldmath{$d$}-small} if it is a subquotient of a module that is finitely generated in degrees $\leq d$, and 
{\bf \boldmath{$d$}-smallish} if it admits a filtration whose associated graded is $d$-small.  Theorem \ref{noetherian} implies that
$d$-small modules are finitely generated, and it is not hard to prove that the same is true for $d$-smallish modules (Proposition \ref{smallish fg}).
The degree of generation of such modules may be much larger than $d$, but for the purposes of the results mentioned in the
previous paragraphs, they grow as if they were finitely generated in degrees $\leq d$.  This will be important for the two classes
of examples that we study in detail, which we describe below.

\subsection{Homology of configuration spaces}\label{sec:introhom}
Given a graph $G$ and a positive integer $n$, the \textbf{\boldmath{$n$}-stranded unordered configuration space of \boldmath{$G$}} 
is the topological space
\[
\UConf_n(G) := \big\{(x_1,\ldots,x_n) \in G^n \bigmid x_i \neq x_j\big\}\big{/}S_n.
\]
The homology groups of these spaces have been extensively studied in settings both theoretical \cite{ADK,A,KP} and applied \cite{F}.

One powerful technique for studying these groups, which is applied for example in \cite{ADK}, is to fix the graph $G$ and consider the direct
sum of the homology groups of $\UConf_n(G)$ for all $n$.  This direct sum is a module over a polynomial ring with generators indexed by the edges
of $G$, where a variable acts by ``adding a point'' to the corresponding edge.  An orthogonal approach is to fix $n$ and vary $G$.
This approach has been used in a number of recent works \cite{RW,R,L,PR-trees}, and it is the approach that we take here.
In particular, the homology of $\UConf_n(G)$ is functorial with respect to contractions (Section \ref{sec:func}), 
and therefore defines an object of $\Rep_{\Z}(\cGgop)$.

\begin{theorem}\label{treefing}
Fix natural numbers $g$, $i$, and $n$. The $\cGopg$-module
\[
G \mapsto H_i\big(\UConf_n(G); \Z\big)
\]
is $(g+i+n)$-small.  In particular, it is finitely generated.
\end{theorem}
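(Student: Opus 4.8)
The plan is to realize the $\cGgop$-module $M\colon G\mapsto H_i(\UConf_n(G);\Z)$ as a subquotient of an explicit $\cGgop$-module built from a combinatorial chain complex, and then to quote Theorem~\ref{noetherian}. First I would fix, for each $n$, a functorial combinatorial model for $H_*(\UConf_n(-);\Z)$ --- such as the (reduced) \'Swi\k{a}tkowski complex; compare \cite{ADK} and, for genus zero, \cite{PR-trees}. Concretely this is a complex $S_\bullet(G)$ of free abelian groups with $H_*(S_\bullet(G))\cong H_*(\UConf_n(G);\Z)$, in which $S_j(G)$ carries a natural basis whose elements record a choice of $j$ ``activated'' vertices of $G$ (of the smallest valence the model permits) together with a decoration --- marked half-edges and an edge-monomial --- of weight $n-j$. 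The homology-level functoriality of Section~\ref{sec:func} refines to a functorial chain map $S_\bullet(G')\to S_\bullet(G)$ for each contraction $G\to G'$; in particular each $S_j$ is a $\cGgop$-module, the differential is a morphism of $\cGgop$-modules, and $M=\Ker\partial_i/\im\partial_{i+1}$ is a subquotient of $S_i$. It therefore suffices to exhibit $M$ as a subquotient of some $\cGgop$-module that is finitely generated in degrees $\le g+i+n$ --- that is, to show $M$ is $(g+i+n)$-small --- after which finite generation is immediate from Theorem~\ref{noetherian}.

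The crux is a support estimate: every class in $H_i(\UConf_n(G))$ is represented by a cycle supported on a subgraph $G_0\subseteq G$ whose edge count is controlled by $i$ and $n$. Here one uses that $\UConf_n(G)$ is $n$-dimensional, so $i\le n$, and --- more delicately --- that a degree-$i$ class must devote on the order of $i$ of its $n$ strands to the activated vertices; together these force $G_0$ to be takeable with at most $i+n$ edges. Given such a $G_0$, one collapses a subforest $F\subseteq G$, chosen maximal subject to being disjoint from a spanning subforest of $G_0$ and to identifying no two of the distinguished vertices of $G_0$. This is a contraction $\pi\colon G\to G':=G/F$, hence a morphism in $\cGgop$; the subgraph $G_0$ persists inside $G'$, and since contracting a forest leaves the genus unchanged, $|\Edge(G')|\le|\Edge(G_0)|+g\le g+i+n$. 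The chosen class thus lies in the image of the map $H_i(\UConf_n(G'))\to H_i(\UConf_n(G))$ induced by $\pi$, with $G'$ having at most $g+i+n$ edges; packaging these maps together exhibits $M$ as a subquotient of a $\cGgop$-module generated in degrees $\le g+i+n$, as required.

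I expect the main obstacle to be precisely this support estimate and the bookkeeping around it. One must choose $G_0$, and then the collapsed forest $F$, carefully enough that (i) the activated vertices stay distinct and of admissible valence in $G'$, (ii) no more than $g$ edges of $G'$ lie outside $G_0$, and (iii) the separate counts really do add up to $g+i+n$ rather than to some larger linear expression in $i$ and $n$; this is where one genuinely needs both the inequality $i\le\dim\UConf_n(G)=n$ and the fact that a degree-$i$ cycle consumes many strands, and it is convenient to work with the \emph{reduced} \'Swi\k{a}tkowski complex, which collapses chains of bivalent vertices and so spares one from carrying along stray subdivision points. This same reliance on the cycle structure --- as opposed to arbitrary chains of $S_i$, which need not be so economically supported --- is what makes $(g+i+n)$-smallness the natural output, rather than a clean bound on the degree of generation. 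A comparatively routine prerequisite is to check that the chain maps attached to contractions assemble into $\cGgop$-module structures compatibly with the functoriality of Section~\ref{sec:func}, which is what legitimizes the ``subquotient'' step in the first paragraph.
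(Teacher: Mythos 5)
You have the right framework and the right target bound, but the proposal diverges from the paper's argument at a crucial point and, in doing so, replaces a tractable chain-level computation with a harder (and unproven) homology-level support estimate.

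The paper's proof works entirely at the level of chains: it shows directly that the $\cGopg$-module $G \mapsto \Sw(G)_{i,n}$ is generated in degrees $\leq g+i+n$, and then $(g+i+n)$-smallness of homology follows at once because homology is a subquotient of this chain module. The mechanism is explicit. A basis element $\sigma$ of $\Sw(G)_{i,n}$ has $i$ distinguished vertices carrying half-edge differences and an edge-monomial of degree $n-i$; one declares an edge \emph{distinguished} if it is a loop, joins two distinguished vertices, appears in the monomial, or is the edge of one of the chosen half-edges, and one counts (using that the induced subgraph on the distinguished vertices has genus at most $g$, with loops handled separately) that there are at most $g+i+n$ distinguished edges. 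If $|G| > g+i+n$ there is a non-loop, non-distinguished edge $e$, and one verifies by the explicit formula for $\widetilde\varphi^*$ that $\sigma$ is the pullback of the corresponding basis element along the simple contraction $G\to G/e$. That is the whole argument.

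Your proposal declines this route on the grounds that ``arbitrary chains of $S_i$ \ldots need not be so economically supported,'' and pivots instead to a support estimate on cycles representing homology classes. This is a misconception worth isolating: generation of $\Sw(\cdot)_{i,n}$ in low degree does not require that every chain be supported on few edges, only that a generating set of $\Sw(G)_{i,n}$ pull back from small graphs --- and the basis elements do, exactly as above. Having abandoned the chain-level bound, you are forced to prove that every class in $H_i(\UConf_n(G))$ has a cycle representative supported on a subgraph $G_0$ with at most $i+n$ edges and then is actually pulled back along a forest contraction; you acknowledge this as ``the main obstacle'' and do not establish it. Beyond being unproven, it is genuinely harder than what is needed: a cycle is a $\Z$-linear combination of basis elements whose individual supports may be disjoint, so there is no a priori bound on the support of a cycle, and the argument that one can modify the representative to concentrate it, while keeping the distinguished vertices separated and of admissible valence after collapsing a forest, is exactly the bookkeeping the paper's distinguished-edge device is designed to avoid. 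Note also that even if it worked, your route would prove the stronger statement that $H_i(\UConf_n(-))$ is \emph{generated} in degrees $\leq g+i+n$, which the paper does not claim (and which one should not expect from a homology functor); the point of ``$d$-small'' is precisely that it is inherited by subquotients of the chain module without any control on cycle representatives.
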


One concrete consequence of Theorems \ref{noetherian} and \ref{treefing} is that we obtain some control of the type of torsion that can appear in these homology groups.
We know from the work of Ko and Park that the only torsion that can appear in $H_1\big(\UConf_n(G); \Z\big)$ is 2-torsion \cite[Corollary 3.6]{KP}.
Furthermore, this torsion carries extremely interesting information: it is trivial if and only if $G$ is planar!
The topological meaning of torsion in higher degree homology is more mysterious, but we can at least show that there is a bound on the type
of torsion that can occur.

\begin{corollary}\label{torsion-cor}
For any triple $(g,i,n)$ of positive integers, there exists a constant $d_{g,i,n}$ such that for every graph $G$ of genus $g$, the torsion part of
$H_i\big(\UConf_n(G); \Z\big)$ has exponent at most $d_{g,i,n}$.
\end{corollary}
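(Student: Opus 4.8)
The plan is to deduce Corollary \ref{torsion-cor} from Theorems \ref{noetherian} and \ref{treefing} by a standard Noetherianity-plus-finite-generation argument, applied not to the integral homology module itself but to the family of its mod-$p$ reductions. First I would fix $(g,i,n)$ and set $d = g+i+n$. By Theorem \ref{treefing}, the $\cGopg$-module $M$ given by $G \mapsto H_i(\UConf_n(G);\Z)$ is $d$-small, hence finitely generated over $\Z$; in particular there is a finite list of graphs $x_1,\ldots,x_r$ and elements $v_j \in M(x_j)$ generating $M$. The key point is that a prime $p$ divides the exponent of the torsion of $M(G)$ for some $G$ of genus $g$ if and only if the mod-$p$ homology $H_i(\UConf_n(G);\mathbb{F}_p)$ has dimension strictly larger than the rank of $M(G)$, i.e.\ if and only if the natural map $M\otimes_\Z \mathbb{F}_p \to H_i(\UConf_n(-);\mathbb{F}_p)$ fails to be an isomorphism on $G$, equivalently (by universal coefficients) if and only if $\mathrm{Tor}^\Z_1(H_{i-1}(\UConf_n(G);\Z),\mathbb{F}_p)\neq 0$.

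Next I would show that only finitely many primes $p$ can arise this way. Consider $N \in \Rep_\Z(\cGopg)$ defined by $G\mapsto H_{i-1}(\UConf_n(G);\Z)$, which is $(g+i-1+n)$-small by Theorem \ref{treefing}, hence finitely generated, say by elements $w_1,\ldots,w_s$ supported on graphs $y_1,\ldots,y_s$. Each torsion subgroup $(N(y_k))_{\mathrm{tors}}$ is a finitely generated abelian group, so its exponent is divisible only by primes in a fixed finite set $S$. I claim that for $p\notin S$ the reduction map $M\otimes_\Z \mathbb{F}_p \to H_i(\UConf_n(-);\mathbb{F}_p)$ is surjective — indeed $M\otimes_\Z\mathbb{F}_p$ is generated by the images of the $v_j$, and on each generating object the universal-coefficient sequence splits once $p$ avoids the torsion of $N$ on the relevant graphs; surjectivity on generators plus functoriality gives surjectivity everywhere. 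It then suffices to bound, for the remaining finitely many primes $p\in S$, the $p$-primary exponent of the torsion uniformly in $G$.

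For a single prime $p\in S$, apply Theorem \ref{noetherian}: the category $\Rep_{\Z_p}(\cGopg)$ (or $\Rep_{\Z/p^m}$) is locally Noetherian. I would argue that $M_{(p)} := M\otimes_\Z \Z_{(p)}$ is finitely generated over $\Z_{(p)}$, so its torsion part $T_p$, being a submodule, is finitely generated by local Noetherianity; a finitely generated $\cGopg$-module over $\Z_{(p)}$ that is everywhere torsion is killed by a single power of $p$ — namely $p^m$, where $p^m$ kills each of the finitely many generating values $T_p(x_j)$ — since annihilation of generators propagates along morphisms. Taking $d_{g,i,n}$ to be the product over $p\in S$ of these powers $p^{m_p}$ yields the uniform bound. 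The main obstacle I anticipate is the bookkeeping in the first two paragraphs: cleanly isolating the statement "a prime contributes to torsion somewhere only if it divides the exponent of the torsion of $N$ on one of finitely many fixed graphs," which requires the universal coefficient theorem together with the fact that finite generation of $M$ is witnessed on finitely many objects; once the set $S$ of bad primes is finite, the reduction to Noetherianity for each $p\in S$ is routine.
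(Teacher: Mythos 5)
Your proposal contains the key idea in its third paragraph, but the route taken in the first two paragraphs is both unnecessary and erroneous, and the paper's proof is a one-line application of the core observation you eventually make.

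The paper argues as follows: let $T_{g,i,n}\in\Rep_{\Z}(\cGgop)$ assign to each $G$ the torsion subgroup of $H_i(\UConf_n(G);\Z)$. This is a submodule of the finitely generated module of Theorem \ref{treefing}, so by Theorem \ref{noetherian} it is itself finitely generated, say by $w_1,\ldots,w_s$ at graphs $y_1,\ldots,y_s$. Each $w_j$ has finite order $n_j$, and since the transition maps $\varphi^*$ are group homomorphisms, every image $\varphi^*(w_j)$ has order dividing $n_j$. Taking $d_{g,i,n}=\mathrm{lcm}(n_1,\ldots,n_s)$ finishes the proof. There is no need to isolate bad primes or to pass to $\Z_{(p)}$ coefficients.

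Your first paragraph has a genuine error: the chain of equivalences conflates torsion in $H_i$ with torsion in $H_{i-1}$. By the universal coefficient theorem, the cokernel of $M(G)\otimes\mathbb{F}_p\to H_i(\UConf_n(G);\mathbb{F}_p)$ is $\mathrm{Tor}^{\Z}_1\bigl(H_{i-1}(\UConf_n(G);\Z),\mathbb{F}_p\bigr)$, which detects $p$-torsion in $H_{i-1}$, not in $M=H_i$. The $p$-torsion of $M(G)$ is detected instead by $\mathrm{Tor}^{\Z}_1(M(G),\mathbb{F}_p)\neq 0$, which is an intrinsic condition on $M(G)$ and does not appear in that cokernel. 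Your second paragraph then compounds the problem: knowing the torsion of $N(y_k)$ on the \emph{generating} graphs $y_k$ does not control the torsion of $N(G)$ elsewhere, because $\Z$-linear combinations of images of torsion-free generators can themselves be torsion. (For instance, $\Z\oplus\Z/2$ is generated by $(1,0)$ and $(1,1)$, both of infinite order.) That is precisely the difficulty the corollary is meant to overcome, and it can only be overcome by passing to the torsion \emph{submodule} and invoking Noetherianity to make it finitely generated. Once you make that move, you have the full answer, as in your third paragraph, and the localization at each $p$ and the bound on the set of primes are both superfluous.
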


\begin{remark}
In this work we only consider \textit{unordered} configurations of points, mainly because the tools we use largely derive from the paper \cite{ADK}
and this is the setting in which they work. It is likely that one can obtain analogues of Theorem \ref{treefing} and Corollary \ref{torsion-cor}
for \textit{ordered} configuration spaces, starting by reproving certain results 
from \cite{ADK} in the ordered setting. 
\end{remark}

\subsection{Kazhdan--Lusztig coefficients}
Kazhdan--Lusztig polynomials of matroids are analogues of Kazhdan--Lusztig polynomials of Coxeter groups.
Just as Kazhdan--Lusztig polynomials of Weyl groups can be interpreted as Poincar\'e polynomials of certain intersection homology groups,
the same is true of Kazhdan--Lusztig polynomials of graphical (or, more generally, realizable) matroids.
See \cite{KLS} for a survey that explores this analogy in depth.

More precisely, given a graph $G$, we can define a complex variety $X_G$, called the {\bf reciprocal plane}, 
with the property that the coefficient of $t^i$ in the Kazhdan--Lusztig
polynomial of $G$ is equal to the dimension of $\IH_{2i}(X_G)$.  These homology groups are functorial with respect to contractions \cite{fs-braid},
thus we obtain an object of $\Rep_\mathbb{C}(\cGgop)$.

\begin{theorem}\label{kl-main}
Fix a natural number $g$ and a positive integer $i$.
The $\cGopg$-module
\[
G \mapsto \IH_{2i}(X_G)
\]
is $(2i-1+g)$-smallish.  In particular, it is finitely generated.
\end{theorem}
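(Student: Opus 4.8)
The plan is to reduce the theorem to the deletion--contraction recursion for Kazhdan--Lusztig polynomials of matroids and then argue by a double induction, on the genus $g$ on the outside and on $i$ on the inside, in analogy with the proof of Theorem~\ref{treefing} but building a filtration rather than exhibiting a subquotient directly. Recall that $\IH_{2i}(X_G)$ is the degree-$i$ piece of the Kazhdan--Lusztig module of the graphical matroid $M(G)$, with dimension equal to the coefficient of $t^i$ in $P_G(t)$, and that by \cite{fs-braid} the rule $G\mapsto\IH_{2i}(X_G)$ is a $\cGopg$-module. I would construct a finite filtration of this module whose associated graded is $(2i-1+g)$-small; Theorem~\ref{noetherian} and Proposition~\ref{smallish fg} then give finite generation.

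For the base cases: if $g=0$ then $M(G)$ is Boolean for every forest $G$, so $P_G(t)=1$ and $\IH_{2i}(X_G)=0$ for all $i\geq 1$, and the zero module is trivially $(2i-1)$-small. For $g\geq1$ the inner induction starts at $i=1$, where one uses only that $\IH_0(X_-)$ is the constant functor $\underline{\mathbb C}$ -- which is $g$-small, since every connected genus-$g$ graph contracts onto the bouquet of $g$ loops -- together with the pushforward lemma below; the resulting contribution is $(g+1)$-small, matching the bound $2\cdot 1-1+g$.

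The engine of the inductive step is a deletion--contraction recursion for $P_G$ in the style of Braden--Huh--Matherne--Proudfoot--Wang and Braden--Vysogorets: for an edge $e$ of $G$ that is neither a loop nor a bridge -- such $e$ exists whenever $M(G)$ is not Boolean, and $\IH_{2i}(X_G)=0$ otherwise -- one expresses $P_G(t)$ as $P_{G\setminus e}(t)$ plus an integer combination of products $t^a\chi_N(t)P_{N'}(t)$ with $a\geq 1$, where $N$ and $N'$ range over proper minors of $M(G)$. Three features make this usable: (i) $G\setminus e$ has genus $g-1$, and every minor of positive genus that appears has genus at most $g-1$; (ii) every summand other than $P_{G\setminus e}$ carries a positive power of $t$, i.e.\ comes with a Gysin/Lefschetz shift lowering homological degree by at least $2$, so every genus-$g$ contribution lies in strictly smaller homological degree; and (iii) the genus-$g$ contributions only involve Kazhdan--Lusztig polynomials of graphs of the form $G/F$ with $F$ a forest, each of which has fewer edges than $G$. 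To promote this to a statement about $\cGopg$-modules I must get around the facts that deletion is not a morphism in $\cGg$ and that the choice of $e$ is not canonical: rather than forming ``$G\mapsto\IH_{2i}(X_{G\setminus e})$'', I would filter $\IH_{2i}(X_-)$ by the submodules generated by the images of all iterated contraction maps $\IH_{2i}(X_{G/F})\to\IH_{2i}(X_G)$, identify the successive quotients with $\cG_{g-1}^{\,\op}$-modules assembled from the genus-$\leq g-1$ data in (i), and induce those up to $\cGopg$ along the ``add an edge'' functor, which raises generation degree by one and so turns a $(2i-1+(g-1))$-small $\cG_{g-1}^{\,\op}$-module into a $(2i-1+g)$-small $\cGopg$-module. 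The characteristic-polynomial factors $\chi_N$ contribute modules generated in bounded degree (at most $g-1$, for dimension reasons in the spirit of Proposition~\ref{bounded growth}), and the genus-$g$ part contributes the image, under the exact pushforward functor $N\mapsto\big(G\mapsto\bigoplus_{e}N(G/e)\big)$, of the $(2i-3+g)$-small module $\IH_{2i-2}(X_-)$ -- available by the inner induction, or $g$-small when $i=1$ -- hence a $(2i-2+g)$-small module. Since $d$-smallness is preserved under extensions, the finite filtration of $\IH_{2i}(X_-)$ that results has $(2i-1+g)$-small associated graded, which is what we want.

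The principal obstacle is exactly the middle step above: turning the edge-dependent, deletion-flavored matroid recursion into an honest finite filtration of the $\cGopg$-module $\IH_{2i}(X_-)$, naturally in $\cGg$, and then checking that the combined bookkeeping of genus, homological degree, and edge count telescopes to precisely $2i-1+g$ rather than to something larger. The supporting pushforward lemma -- that $N\mapsto\big(G\mapsto\bigoplus_e N(G/e)\big)$ is exact and sends $d$-small (resp.\ $d$-smallish) modules to $(d{+}1)$-small (resp.\ $(d{+}1)$-smallish) modules -- is routine once one observes that a generator of $N$ supported on a graph $H$ with at most $d$ edges, regarded as an element of $N(G_0/e_0)$ for each of the finitely many graphs $G_0$ obtained from $H$ by splitting a vertex and inserting an edge $e_0$, is a generator of the pushforward supported on a graph with at most $d+1$ edges.
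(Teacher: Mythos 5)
Your approach is genuinely different from the paper's, but it contains a gap that is fatal as written, and it is the one you yourself flag as the ``principal obstacle.'' The deletion--contraction recursion you invoke is a \emph{numerical} identity among Kazhdan--Lusztig polynomials of matroids; it has no known lift to a natural filtration or long exact sequence of intersection homology groups. Worse, deletion is fundamentally incompatible with the category at hand: $\IH_{2i}(X_G)$ is \emph{not} functorial with respect to edge deletions (this is the reason the paper's sequel \cite{MPR}, which enlarges $\cGg$ to allow deletions, has no analogue of this theorem). So ``$G\mapsto\IH_{2i}(X_{G\setminus e})$'' is not a $\cGgop$-module and cannot sit in a filtration of $\IH_{2i}$, and your proposed substitute --- filtering $\IH_{2i}$ by images of contraction maps $\IH_{2i}(X_{G/F})\to\IH_{2i}(X_G)$ --- does not reproduce the deletion term or the characteristic-polynomial corrections: the successive quotients of that filtration are not governed by the deletion--contraction data, and there is no natural ``add an edge'' functor from $\cG_{g-1}^{\op}$-modules to $\cGgop$-modules that would assemble them. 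Your pushforward $N\mapsto\bigl(G\mapsto\bigoplus_e N(G/e)\bigr)$ also quietly changes nothing at the level of genus (contracting a non-loop edge preserves genus), so it does not mediate between $\cG_{g-1}^{\op}$ and $\cGgop$ as the induction requires.

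The paper avoids all of this by using a filtration that \emph{is} available: the spectral sequence of \cite{fs-braid} (Theorem~\ref{spectral}), which filters $\IH_{2i}$ by flats inside the fixed category $\cGgop$, with $E^1$ page built from Orlik--Solomon groups $\OS_*$ of the flats and lower $\IH$ of the contractions $G/F$. Smallness of each $E^1$ term then follows from Lemma~\ref{os-small} ($\OS_i$ is $(g+i)$-small) combined with Lemma~\ref{graph together} (gluing small modules over a smooshing), and the crucial drop from $2i+g$ to $2i-1+g$ comes from the vanishing $\IH_{2(i-q)}(X_H)=0$ unless $2(i-q)<p$ or $(p,q)=(0,i)$ \cite[Proposition 3.4]{EPW}, which kills the $q=0$ row. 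That vanishing plays the role your ``homological degree drop (ii)'' was supposed to play, but it is extracted from a concrete geometric input rather than from a hypothetical categorified recursion. If you want a salvageable version of your idea, the thing to prove would be a naturality statement for the \cite{fs-braid} decomposition itself, not for deletion--contraction; as it stands your sketch does not supply the filtration it relies on.
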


For example, Theorem \ref{kl-main} combines
with the results on subdivision described in Section \ref{intro-growth}
to imply that the $\ith$ Kazhdan--Lusztig polynomial of the matroid associated with the $n$-cycle
is a polynomial in $n$ of degree at most $i$.  Indeed, the formulas for these coefficients appearing in \cite{PWY} demonstrate
that this bound is sharp (Example \ref{cycle}).

\subsection{Outer automorphisms of free groups}
A further motivation for studying the category $\cGg$ and its modules is that this category
is closely related to $\Out(F_g)$, the outer automorphism group of a free group on $g$ generators.
This group is in many ways analogous to various arithmetic groups and to mapping class groups of surfaces,
and much work has gone into exploring its cohomology; see Vogtmann's ICM address \cite{Vogtmann-ICM} for a survey.

We call a graph $G$ of genus $g\geq 2$ {\bf reduced} if it has no bridges and no vertices of valence 2.
If we consider the full subcategory of $\cGg$ consisting of reduced graphs and replace it with an equivalent small
category, we obtain a category whose nerve is a classifying space for $\Out(F_g)$ (Corollary \ref{Kp1}).
This observation leads to the following theorem.

\begin{theorem}\label{ext}
Fix a non-negative integer $g$ and a commutative ring $k$.  Let $M\in\Rep_k(\cGgop)$ be the module that
assigns $k$ to every reduced graph and $0$ to every non-reduced graph,
with all nontrivial transition functions equal to the identity.  Then there is a canonical $k$-algebra isomorphism
$$\Ext^*_{\Rep_k(\cGgop)}(M,M) \;\cong\; H^*(\Out(F_g); k).$$
\end{theorem}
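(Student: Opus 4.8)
The plan is to push the computation onto the full subcategory of reduced graphs, where the $\Ext$-algebra of the constant module is, by a standard fact, the cohomology of the nerve. Fix a small skeleton of $\cGgop$ and let $\mathcal{R}\subseteq\cGgop$ be the full subcategory on the reduced graphs, with inclusion $j\colon\mathcal{R}\into\cGgop$. The first thing I would prove is a combinatorial lemma: $\mathcal{R}$ is a \emph{sieve} in $\cGgop$, i.e.\ if $G'$ is reduced and $\phi\colon G'\to G$ is a contraction in $\cGg$, then $G$ is reduced. Since collapsing a subforest may be done one edge at a time and isomorphisms preserve reducedness, it suffices to treat the collapse of a single non-loop edge $e=vw$ of $G'$. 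Such a collapse creates no new bridge, because a bridge of $G'/e$ pulls back to a bridge of $G'$; and the new vertex $[vw]$ of $G'/e$ has valence $\deg_{G'}(v)+\deg_{G'}(w)-2\geq 3+3-2=4$, so $G'/e$ still has minimum valence $3$. (For $g\leq 1$ I would check the theorem directly, under the convention that the reduced graphs of genus $0$ and $1$ are the point and the single loop, with automorphism groups $\Out(F_0)=1$ and $\Out(F_1)\cong\Z/2$.)

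Because $\mathcal{R}$ is a sieve, the right Kan extension $j_*$ along $j$ is ``extension by zero'': $(j_*N)(G)=N(G)$ for $G$ reduced and $(j_*N)(G)=0$ otherwise, with the only nonzero transition maps being those of $N$. In particular $M=j_*\underline{k}$, where $\underline{k}\in\Rep_k(\mathcal{R})$ is the constant functor. The restriction functor $j^*\colon\Rep_k(\cGgop)\to\Rep_k(\mathcal{R})$ is exact, since kernels and cokernels in these categories are computed objectwise, so its right adjoint $j_*$ preserves injectives; furthermore $j_*$ is exact, is fully faithful with $j^*j_*\cong\id$, and has essential image the modules that vanish on every non-reduced graph (here one uses the sieve property, which guarantees there is no morphism in $\cGgop$ from a non-reduced graph to a reduced one). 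A fully faithful exact functor that preserves injectives identifies $\Ext$-algebras, so $\Ext^*_{\Rep_k(\cGgop)}(M,M)\cong\Ext^*_{\Rep_k(\mathcal{R})}(\underline{k},\underline{k})$ as graded $k$-algebras; concretely, $j_*$ sends an injective resolution of $\underline{k}$ to one of $M$, and $\Hom_{\Rep_k(\cGgop)}(M,-)$ applied to it returns $\Hom_{\Rep_k(\mathcal{R})}(\underline{k},-)$ of the original.

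To finish, I would invoke the standard identification, for any small category $\mathcal{D}$, of $\Ext^*_{\Rep_k(\mathcal{D})}(\underline{k},\underline{k})$ with $H^*(|N\mathcal{D}|;k)$ as graded rings: the constant functor has a bar resolution by direct sums of representable projectives $P_d=k[\Hom_{\mathcal{D}}(d,-)]$, and since $\Hom_{\Rep_k(\mathcal{D})}(P_d,\underline{k})=\underline{k}(d)=k$, applying $\Hom(-,\underline{k})$ to this resolution yields precisely the simplicial cochain complex of the nerve $N\mathcal{D}$, with the Yoneda product corresponding to the cup product. Applying this with $\mathcal{D}=\mathcal{R}$, and observing that $|N\mathcal{R}|$ is homeomorphic to the realization of the nerve of the full subcategory of reduced graphs inside $\cGg$ — which is a classifying space for $\Out(F_g)$ by Corollary~\ref{Kp1} — gives the desired isomorphism $\Ext^*_{\Rep_k(\cGgop)}(M,M)\cong H^*(\Out(F_g);k)$.

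I expect the main obstacle to be bookkeeping rather than a genuine difficulty: one must verify that every isomorphism in the chain is multiplicative, i.e.\ that the Yoneda product is carried to the cup product on $H^*(|N\mathcal{R}|;k)$ and that this is compatible with the product transported along $j_*$. This is routine but cleanest with a careful choice of DG- (or $A_\infty$-) models, or with a direct appeal to the ring-level statement $\Ext^*_{\Rep_k(\mathcal{D})}(\underline{k},\underline{k})\cong H^*(B\mathcal{D};k)$; a subsidiary point is to fix the small skeleton of $\mathcal{R}$ before speaking of injective resolutions or the nerve.
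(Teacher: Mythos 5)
Your proof is correct and takes essentially the same approach as the paper: the paper likewise observes that restriction to reduced graphs is exact and left adjoint to the exact extension-by-zero functor, uses this to transport the $\Ext$-algebra to $\Ext^*_{\Rep_k(\cGgred^\op)}(\uk,\uk)$, and then invokes Theorem~\ref{nerve-ext} together with Corollary~\ref{Kp1} (that the nerve of the small model of $\cGgred$ is a $K(\Out(F_g),1)$). The only differences are expository: you supply a proof of the ``sieve'' fact (contractions of reduced graphs are reduced), which the paper records only as an unproved remark, and you unpack the homological step (a fully faithful exact functor preserving injectives identifies $\Ext$-algebras) a bit more explicitly than the paper's terse appeal to exactness and adjunction.
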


Our proof of Theorem \ref{ext} relies on the very non-trivial theorem of
Culler and Vogtmann that outer space is contractible \cite{CV}.  Since $\Out(F_g)$ acts on outer space with finite stabilizers,
the rational cohomology of the quotient coincides with the rational cohomology of $\Out(F_g)$.  
We stress, however, that Theorem \ref{ext} holds for arbitrary coefficients. 

\excise{As a sample application of this Theorem \ref{ext}, we compute the first cohomology of 
$\Out(F_2)\cong\operatorname{GL}(2,\Z)$ with coefficients in an arbitrary field $k$ (Section \ref{sec:sample}),
with close attention to the dependence of the answer on the characteristic of $k$.
This calculation can be performed by other means (Remark \ref{direct}), and the result is surely not new to experts.
Still, we hope that it serves as an example of how one can use the category $\Rep_k(\cGgop)$ to perform non-trivial calculations of cohomology groups of $\Out(F_g)$ in arbitrary characteristic.}

\subsection{Relationship to earlier and later work}
We briefly address the relationship between this paper and the two related works \cite{PR-trees} and \cite{MPR}.
\begin{itemize}
\item
This paper generalizes the authors' previous paper \cite{PR-trees}, in which we prove Theorems \ref{noetherian}
and \ref{treefing} for the category $\cG_0$ of trees.  
The proof of Theorem \ref{noetherian} takes the argument used in \cite{PR-trees} as a starting point
and builds on this argument in order to treat graphs of higher genus.  While the idea of applying the techniques
of \cite{sam} is the same, there is a significant additional layer of technical difficulty in the higher genus setting.

Once we have established Theorem \ref{noetherian}, the proof of Theorem \ref{treefing} for arbitrary genus
is nearly identical to the proof in the genus 0 case.  Theorem \ref{kl-main} has no direct analogue in the genus 0 setting
because Kazhdan--Lusztig polynomials of trees are trivial.  The same goes for Theorem \ref{ext} because $\Out(F_0)$
is the trivial group.

\item
A more recent preprint of Miyata and the authors \cite{MPR}
deals with modules over a category $\cG$ whose objects are graphs of arbitrary genus and whose morphisms are built out
of contractions and edge deletions.  In particular, the category $\cG_g$ is the full subcategory of $\cG$ consisting of graphs
of genus $g$.  In that paper, we improve upon Theorems \ref{noetherian} and \ref{treefing} and Corollary \ref{torsion-cor}
by proving analogous results for the category $\cG$.  The proof of the analogue of Theorem \ref{noetherian} in that paper
also uses the machinery of \cite{sam}, but employs a different approach that is not based on choosing a spanning tree.
We believe that the approach based on spanning trees is better adapted to studying the Hilbert series of a module,
as outlined in \cite[Section 1.3]{Ramos-Hilbert}.

Theorem \ref{kl-main} has no analogue in \cite{MPR} because the intersection homology groups of the reciprocal plane
are not functorial with respect to deletions.  There is also nothing in that paper about automorphism groups of free groups.
\end{itemize}

\vspace{\baselineskip}
\noindent
{\em Acknowledgments:}
NP is supported by NSF grant DMS-1565036.  ER is supported
by NSF grant DMS-1704811. The authors would like to thank Melody Chan, Jim Davis, Dan Dugger, 
Steven Sam, Paul Seymour, Dev Sinha, and Karen Vogtmann for valuable conversations.

\section{Graph categories}
We begin by fixing terminology and conventions about graphs and trees and defining all of the various categories of decorated graphs
with which we will work in this paper.  The reader may want to skim this section at first and refer back to it as needed.

\subsection{Graphs}\label{sec:main defs}
By a {\bf graph}, we will mean a finite CW complex of dimension at most 1.
The 0-cells are called {\bf vertices} and the 1-cells are called {\bf edges}.  We will write $|G|$
for the number of edges of $G$.  If $G$ is a non-empty connected graph, we define the {\bf genus}
of $G$ to be the rank of the first homology group, or equivalently the number of edges minus the number of vertices plus 1.
If we refer to a graph of genus $g$, we will always implicitly mean that the graph is non-empty and connected.

If $f:G\to G'$ is a map of CW complexes, we say that $f$ is {\bf very cellular} if it takes every vertex to a vertex
and every edge to either a vertex or an edge.  An edge that maps to a vertex will be called a {\bf contracted edge}.
If $G$ and $G'$ are graphs, we define a {\bf graph morphism} from $G$ to $G'$ to be an equivalence class of very cellular maps,
where two very cellular maps are equivalent if and only if they are homotopic through very cellular maps.
We note that a graph morphism $\varphi:G\to G'$ induces a well defined map on vertex sets, and it also makes sense to talk
about the set of edges that are contracted by $\varphi$.

We define a {\bf smooshing} to be a surjective graph morphism with connected fibers,
and we define a {\bf contraction} to be a smooshing with contractible fibers.
In particular, any automorphism of $G$ is a contraction from $G$ to itself, which necessarily has no contracted edges.
More generally, a contraction is a smooshing between two graphs of the same genus.
We denote by $\cGg$ the category whose objects are graphs of genus $g$ and whose morphisms are contractions.

\subsection{Trees}\label{sec:trees}
The definitions in Sections \ref{sec:trees} and \ref{sec:rigid} will be used only in Section \ref{repcat}, where we prove Theorem \ref{noetherian}.

A {\bf tree} is a graph of genus 0, and a {\bf rooted tree} is a pair consisting of a tree and a vertex, which is called the {\bf root}.
The vertex set of a rooted tree is equipped with a natural partial order in which $v\leq w$ if and only if the unique path from $v$ to the root
passes through $w$ (so the root is maximal).  
A {\bf leaf} of a rooted tree is a minimal vertex with respect to this partial order.  

For any vertex $v$, we define a {\bf descendant} of $v$ to be a vertex covered by $v$ in the partial order.
A {\bf planar rooted tree} is a rooted tree along with a linear order on the set of descendants of each vertex $v$.
This induces a depth-first linear order on the entire vertex set of the tree.
A {\bf contraction of rooted trees} is a contraction of trees that preserves the root, and a {\bf contraction of planar rooted trees}
is a contraction of rooted trees with the additional property that,
if $v$ comes before $w$ in the depth-first order, then the first vertex in the preimage of $v$
comes before the first vertex in the preimage of $w$.
Let $\cRT$ and $\cPT$ be the contraction categories of rooted trees and planar rooted trees, respectively.

\begin{remark}\label{barter}
Barter \cite{Barter} defines the category $\RTi$ whose objects are rooted trees and whose morphisms 
are pointed order embeddings on vertex sets, 
along with the category $\PTi$ whose objects are planar rooted trees and whose morphisms are pointed
order embeddings that preserve the depth-first linear order.
In \cite[Proposition 2.4]{PR-trees}, we prove that $\RTi$ is equivalent to $\cRT^\op$, 
and a similar argument shows that $\PTi$ is equivalent to $\cPT^\op$.
We will make use of Barter's work, via these equivalences, in Section \ref{repcat}.
\end{remark}

Finally, we will need a labeled version of the above definitions.  Let $S$ be a finite set.
We define an {\bf \boldmath{$S$}-labeled planar rooted tree} to be a triple $(T,v,\ell)$, where
$(T,v)$ is a planar rooted tree and $\ell$ is a function from the set of vertices of $T$ to $S$.
The most naive way to define a contraction $\varphi:(T,v,\ell)\to(T',v',\ell')$ of labeled planar rooted trees would be 
to say that it is a contraction of planar rooted trees with the property that the pullback of $\ell'$ along $\varphi$ is equal to $\ell$.
This, however, is not quite what we want.  
If $\varphi:(T,v)\to (T',v')$ is a contraction of planar rooted trees and $\varphi^*:(T',v')\to(T,v)$ is the corresponding
pointed order embedding under the equivalence of Remark \ref{barter}, we want to impose the condition that the pullback
of $\ell$ along $\varphi^*$ is equal to $\ell'$.  The proof of \cite[Proposition 2.4]{PR-trees} tells us that 
$\varphi^*(w') = \max\varphi^{-1}(w')$, so the appropriate condition for $\varphi:(T,v,\ell)\to(T',v',\ell')$ to be 
an $S$-labeled contraction is that $\ell'(w') = \ell(\max\varphi^{-1}(w'))$ for all $w'\in T'$.
Equivalently, we say that a vertex $w$ of $T$ is {\bf \boldmath{$\varphi$}-maximal} if $u\leq w$ for all vertices $u$ with $\varphi(u)=\varphi(w)$,
and we say that $\varphi$ is an $S$-labeled contraction if and only if $\ell'\circ\varphi(w) = \ell(w)$ for all $\varphi$-maximal vertices $w$.

\subsection{Rigidified graphs}\label{sec:rigid}
If $G$ is a graph, a {\bf spanning tree} of $G$ is a contractible sub-complex of $G$ containing all of the vertices.
A {\bf rigidified graph} of genus $g$ is a graph of genus $g$ along with a choice of spanning tree 
and an ordering and orientation of the $g$ {\bf extra edges} that are not in the spanning tree.  More formally, 
fix once and for all a graph $R_g$ with one vertex and $g$ loops, called the {\bf rose of genus \boldmath{$g$}}.  Then a planar rooted graph of genus $g$ is a 
quadruple $(G,T,v,\tau)$, where $G$ is a graph of genus $g$, $(T,v)$ is a planar rooted spanning tree of $G$,
and $\tau$ is a graph isomorphism from $R_g$ to the quotient space $G/T$.

We denote by $\cPGg$ the category whose objects are rigidified graphs
of genus $g$ and whose morphisms are contractions that restrict to contractions of planar rooted trees
(in particular, only edges in the spanning tree can be contracted) 
and are compatible with the order and orientations of the extra edges.
We use the letter P in the notation because $\mathcal{PG}_0 \cong \cPT$.
The point of this definition is that rigid graphs are graphs with just enough extra structure to eliminate all nontrivial automorphisms.

\subsection{Reduced graphs}\label{sec:reduced}
Most of the definitions in Sections \ref{sec:reduced} and \ref{sec:marked} will be used only in Section \ref{sec:oc}, where we discuss connections
to outer automorphism groups of free groups.  The one exception is that the notion of a half-edge also appears in Section \ref{sec:cc}.

Fix a graph $G$.
A {\bf half-edge} of $G$ is defined to be an end, in the sense of \cite{ends}, of the relative interior of an edge.
For any half-edge $h$, there is an associated edge $e(h)$ and a vertex $v(h)$ that is incident to $e(h)$.
For any pair $(e,v)$ consisting of an edge and a vertex incident to that edge, there are either one or two half-edges $h$
with $e(h)=e$ and $v(h)=v$, depending on whether or not $e$ is a loop.
For any vertex $v$, the {\bf valence} of $v$ is defined to be the number of half-edges $h$ with $v(h) = v$.

An edge of $G$ is called a {\bf bridge} if deleting the edge increases the number of connected components.
We call a non-empty connected graph with no bridges and no vertices of valence 2 {\bf reduced}.  We also define the unique graph with one
vertex and one edge to be reduced, even though the vertex has valence 2.  
Intuitively, the idea is that any non-empty connected graph may be obtained from a reduced graph by subdividing
edges and ``uncontracting'' bridges, and there are finitely many isomorphism classes of reduced graphs of any fixed genus.
For example, there are two reduced graphs of genus 2 up to isomorphism, namely the rose $R_2 = \infty$ and the melon $\banana$.

\begin{remark}
If $G$ is reduced and $\varphi:G\to G'$ is a contraction, then $G'$ is also reduced.
For example, all contractions with domain equal to the melon are either automorphisms or maps to the rose, 
and all contractions with domain equal to the rose are automorphisms.
\end{remark}

We define $\cGgred$ to be the full subcategory of $\cGg$ whose objects are reduced graphs.
In the next section, we will want to talk about the nerve of this category, but one can only define the nerve of a small category.
For this reason, we choose a list $G_1,\ldots,G_r$ that includes a unique representative of each isomorphism class of reduced graphs of genus $g$,
and we let $\cGgred^{\operatorname{small}}$ be the full subcategory of $\cGgred$ with objects $G_1,\ldots,G_r$.
Thus $\cGgred^{\operatorname{small}}$ is a small category that is equivalent to $\cGgred$.

\subsection{Marked reduced graphs}\label{sec:marked}
If $G$ is a graph of genus $g$, a {\bf marking} of $G$ is a homotopy class of homotophy equivalences from the rose $R_g$ to $G$.
(Note that a marking is not required to be a graph morphism.)  A {\bf marked graph} of genus $g$ is a pair $(G,f)$, where $G$ is
a graph of genus $g$ and $f$ is a marking of $G$.  We observe that the set of all markings of $G$ is a torsor for $\Out(F_g)$.
A {\bf contraction} from $(G,f)$ to $(G',f')$ is a contraction $\varphi:G\to G'$
such that $f' = f\circ \varphi$.  
We define {\bf outer category} $\cOg$ to be the category whose objects are marked reduced graphs
of genus $g$ and whose morphisms are contractions.  The group $\Out(F_g)$ acts on $\cOg$
in a natural way, fixing the graph but changing the marking.

As in Section \ref{sec:reduced}, we would like to define a small subcategory of $\cO_g$ that is equivalent to $\cOg$.
We will do this in two
subtlely different ways, which we now describe.
Recall that we have chosen representatives $G_1,\ldots,G_r$ of the isomorphism classes of reduced graphs of genus $g$.
Let $\cOgsm$ be the full subcategory of $\cOg$ consisting of objects of the form $(G_i,f)$ for some $i$ and any marking $f$
of $G_i$.
Note that there are still isomorphisms between distinct objects
of $\cOgsm$.  Specifically, if $f$ is a marking of $G$ and $\varphi:G\to G$ 
is a nontrivial automorphism of $G$, then $f$ and $f\circ\varphi$
are distinct markings of $G$ but $\varphi:(G,f)\to (G,\varphi\circ f)$ is an isomorphism.  To eliminate this phenomenon,
we choose for each $G_i$ a representative of each $\Aut(G_i)$ orbit in the set of markings of $G_i$, and we define
$\cOgti$ to be the subcategory of $\cOg$ generated by these objects.  Note that the natural inclusions
$$\cOgti\subset\cOgsm\subset\cOg$$ are both equivalences.  

\begin{example}\label{outer-1-cat}
There is only one reduced graph of genus 1 up to isomorphism, namely the cycle $R_1$.  A marking of $R_1$
is the same as an orientation of the loop.  The category $\cO_1^{\operatorname{small}}$ has two objects, 
related by the action $\Out(F_1) \cong S_2$,
corresponding to the two choices of marking of $R_1$.
Neither object has nontrivial automorphisms.
The category $\cO_1^{\operatorname{tiny}}$ has only one object, and it has no nontrivial automorphisms.
We discuss the nerves of these categories in Example \ref{Sinfty}.
\end{example}

The advantage of working with $\cOgsm$ is that the action of $\Out(F_g)$ on $\cOg$ restricts to an action on $\cOgsm$, 
where it acts freely on the set of objects.
The advantage of working with $\cOgti$ is that it is a poset category in the following sense.

\begin{proposition}\label{poset}
If $(G,f)$ and $(G',f')$ are objects of $\cOgti$, then $|\Mor_{\cOgti}\big((G,f), (G',f')\big)|\leq 1$.
Furthermore, if there exists a morphism in both directions, then $(G',f')=(G,f)$.
In particular, the set of objects of $\cOgti$ admits a poset structure with $(G',f')\leq (G,f)$ if and only if there
exists a morphism from $(G,f)$ to $(G',f')$.
\end{proposition}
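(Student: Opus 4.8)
The plan is to prove the two assertions separately. The first — that a morphism in each direction forces the two objects to coincide — is elementary, and I would argue as follows. Suppose $\varphi\colon(G,f)\to(G',f')$ and $\psi\colon(G',f')\to(G,f)$ are morphisms of $\cOgti$. Every contraction factors canonically as the quotient by its subforest of contracted edges followed by an isomorphism, and contracting a $k$-edge subforest decreases the number of edges by $k$; hence a contraction from a graph to itself contracts no edges and is an automorphism. Applying this to $\psi\circ\varphi\colon G\to G$, and noting that each edge contracted by $\varphi$ is contracted by $\psi\circ\varphi$, we see that $\varphi$ contracts no edges, so it is an isomorphism $G\to G'$. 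As $G$ and $G'$ are among the fixed representatives of the isomorphism classes of reduced graphs of genus $g$, we get $G=G'$; and as $\varphi\in\Aut(G)$ carries $f$ to $f'$ while $f$ and $f'$ are both among the chosen representatives of the $\Aut(G)$-orbits of markings, we get $f=f'$. (Taking $(G',f')=(G,f)$, the same argument shows that the only endomorphism of an object of $\cOgti$ is the identity.)

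For the second assertion — at most one morphism between any two objects — we may assume $g\geq 2$, since for $g\leq 1$ the category $\cOgti$ has a single object (for $g=1$ this is Example \ref{outer-1-cat}). Fix $g\geq 2$, so that every reduced graph of genus $g$ has all vertices of valence at least $3$, and let $\varphi_1,\varphi_2\colon(G,f)\to(G',f')$ be morphisms. Since $\varphi_i\circ f\simeq f'$ for $i=1,2$ and $f\colon R_g\to G$ is a homotopy equivalence, composing with a homotopy inverse of $f$ gives $\varphi_1\simeq\varphi_2$ as continuous maps. It remains to promote this homotopy to an equality of graph morphisms, and this is the step where reducedness enters in an essential way: the naive hope that an edge of $G$ can be recovered, up to sign, from its coordinate functional on $H_1(G;\Z)$ already fails for graphs containing small edge cuts, so a genuine argument is required. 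My plan is to pass to universal covers.

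Lift $\varphi_1,\varphi_2$ to homotopy equivalences $\widetilde\varphi_1,\widetilde\varphi_2\colon\widetilde G\to\widetilde G'$ of the universal covers, chosen so that a homotopy $\varphi_1\simeq\varphi_2$ lifts to a homotopy between them. By compactness of $G\times[0,1]$ this lifted homotopy has tracks of uniformly bounded length, so $\widetilde\varphi_1$ and $\widetilde\varphi_2$ stay a bounded distance apart in the tree $\widetilde G'$; being lifts of homotopy equivalences of finite graphs they are coarse (proper homotopy) equivalences, so each induces a homeomorphism between the spaces of ends of $\widetilde G$ and $\widetilde G'$, and being a bounded distance apart they induce the \emph{same} homeomorphism. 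The trees $\widetilde G$ and $\widetilde G'$ have all vertices of valence at least $3$ (since $G$ and $G'$ do); in particular they have no valence-$2$ vertices and no finite branches. For each edge $\widetilde e'$ of $\widetilde G'$, the contraction $\widetilde\varphi_i$ carries a unique uncollapsed edge $\widetilde e_i$ of $\widetilde G$ homeomorphically onto $\widetilde e'$, and the preimage under $\widetilde\varphi_i$ of the midpoint of $\widetilde e'$ is a single point $p_i\in\widetilde e_i$; removing $p_i$ separates $\widetilde G$, and the resulting partition of its ends is the pullback, along the end-map of $\widetilde\varphi_i$, of the (nontrivial) partition of the ends of $\widetilde G'$ cut out by $\widetilde e'$. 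Since the two end-maps agree, $p_1$ and $p_2$ induce the same partition of the ends of $\widetilde G$; but in a tree with all vertices of valence at least $3$, two points inducing the same separation of the ends must lie in a common closed edge, whence $\widetilde e_1=\widetilde e_2$. Letting $\widetilde e'$ range over all edges shows that $\widetilde\varphi_1$ and $\widetilde\varphi_2$ collapse the same subforest of $\widetilde G$; a little more bookkeeping with the bounded-distance property pins down orientations and the images of the collapsed subtrees, so $\widetilde\varphi_1=\widetilde\varphi_2$ and hence $\varphi_1=\varphi_2$. (Alternatively, once $\varphi_1$ and $\varphi_2$ are known to collapse the same subforest $F\subseteq G$, one can conclude by writing $\varphi_i=\bar\varphi_i\circ q$ with $q\colon G\to G/F$ the quotient map and $\bar\varphi_i$ isomorphisms, and applying to $\bar\varphi_2^{-1}\circ\bar\varphi_1$ the classical fact that an automorphism of a reduced graph which is homotopic to the identity is the identity.)

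The main obstacle is precisely this last passage — from a homotopy between two contractions of reduced graphs to the combinatorial conclusion that they collapse the same edges. Everything around it (the canonical factorization of contractions, the edge-count bookkeeping, and the rigidity of automorphisms of reduced graphs) is routine. The universal-cover argument sketched above is essentially the graph-theoretic core of the standard fact that the spine of outer space carries a well-defined poset structure, so one could alternatively deduce the proposition from the theory of \cite{CV}.
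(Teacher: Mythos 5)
Your argument is correct in outline but takes a genuinely different route from the paper's, so a comparison is in order. The paper disposes of the "at most one morphism" claim in a few lines by invoking two results from the literature: first, for $G=G'$, that an automorphism of a reduced graph homotopic to the identity is the identity (Zimmermann, \cite[Lemma 1]{Zim}); second, for $G\neq G'$, that two homotopic contractions $\varphi,\psi\colon G\to G'$ differ by post-composition with an automorphism of $G'$ (Smillie--Vogtmann, \cite[Lemma 1.3]{SV}), which reduces to the first case. Your universal-cover-and-ends argument is, in effect, a from-scratch proof of the Smillie--Vogtmann input: you show directly that two homotopic contractions of reduced genus-$g$ ($g\ge 2$) graphs collapse the same edges, using that bounded homotopies lift to bounded-track homotopies on universal covers and hence induce the same map on ends, together with the fact that a tree with all vertices of valence at least $3$ reconstructs its edges from end-separations. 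That core separation argument is correct as you've stated it. What each approach buys: the paper's is shorter and cleaner, at the cost of two citations; yours is self-contained and exposes the geometric mechanism (reducedness gives you valence $\ge 3$ on the universal cover, which is exactly what makes end-separations rigid), which is instructive. Your treatment of the antisymmetry ("furthermore") clause via edge-counting and orbit-representative bookkeeping is correct and more explicit than the paper, which leaves that part implicit.

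One caveat: the phrase "a little more bookkeeping with the bounded-distance property pins down orientations and the images of the collapsed subtrees, so $\widetilde\varphi_1=\widetilde\varphi_2$" is doing real work that you should spell out if you want the primary argument to stand alone. It does go through — once the uncollapsed edges agree, the matching of endpoints is forced by the common end-map, and the images of collapsed components are then determined by incidence — but as written it is a gap. Your parenthetical alternative (reduce to an automorphism of $G/F\cong G'$ homotopic to the identity and cite Zimmermann) closes that gap cleanly and is in fact how the paper concludes, so I would lean on it. You also gloss over $g\le 1$: it is not enough that $\cOgti$ has a single object; you must also verify the endomorphism monoid is trivial (for $g=1$, orientation-reversal is not homotopic to the identity on the loop, which is where the argument lives). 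This is easy but worth a sentence.
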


\begin{proof}
If $g\leq 1$, the proposition is trivial, so we assume that $g \geq 2$.  We begin by proving the proposition when $G=G'$.
In this case, the proposition says that, if $\sigma$ is an automorphism of $G$ that is homotopic to the identity, then $\sigma$
must in fact be equal to the identity.  This is proved in \cite[Lemma 1]{Zim}.

Next we consider the case where $G \neq G'$. Suppose that $f$ is a marking of $G$ and $\varphi:G \rightarrow G'$ and $\psi:G \rightarrow G'$
are contractions with $\varphi \circ f = \psi \circ f$.  This implies that $\varphi$ is homotopic to $\psi$.
By \cite[Lemma 1.3]{SV}, $\varphi$ and $\psi$ differ by an automorphism $\sigma$ of $G'$.  Since $\varphi$ is homotopic to $\psi$,
$\sigma$ is homotopic to the identity, therefore $\sigma$ is equal to the identity by the previous paragraph.
Thus $\varphi=\psi$, as desired.
\end{proof}

\section{Local Noetherianity}\label{repcat}
The purpose of this section is to prove Theorem \ref{noetherian}, which says that $\Rep_k(\cGgop)$ is locally Noetherian
for any Noetherian commutative ring $k$.

\subsection{Gr\"obner theory of categories}

Let $\C$ be an essentially small category and $x$ an object of $C$. 
We define $\C_x$ to be the set of equivalence classes of morphisms out of $x$,
where $f\in\Mor_\C(x,y)$ is equivalent to $g\in\Mor_\C(x,y')$ if there exists an isomorphism $h$ from $y$ to $y'$ such that $h\circ f = g$.
The set $\C_x$ comes equipped with a natural quasi-order defined by putting 
\[
f \leq g \iff \text{ there exists a morphism $h$ with } h \circ f = g.
\]
Note that it is possible to have $f\leq g$ and $g\leq f$ even if the targets of $f$ and $g$ are not isomorphic, hence $\leq$ is only a quasi-order.
An infinite sequence $f_0, f_1, f_2, \ldots$ of elements of $\C_x$ is called {\bf bad} 
if there is no pair of indices $i < j$ such that $f_i \leq f_j$.
The category $\C$ is said to satisfy property {\bf (G2)} if, for every object $x$ of $\C$, $\C_x$ admits no bad sequences.
The category $\C$ is said to satisfy property {\bf (G1)} if, for every object $x$ of $\C$, $\C_x$ admits a linear order $\preceq$ that is compatible with post-composition
in the following sense:
if $f,g\in \Mor_\C(x,y)$, $h\in\Mor_\C(y,z)$, and $f\preceq g$, then $h \circ g \preceq h \circ f$.
The category $\C$ is called \textbf{Gr\"obner} 
if it satisfies properties (G1) and (G2) and has no endomorphisms other than the identity maps.

\begin{remark}
Sam and Snowden \cite{sam} explain that the motivation for properties (G1) and (G2) is deeply rooted in Gr\"obner basis theory 
from commutative algebra, with $\leq$ playing the role of the natural divisibility order on monomials and $\preceq$ 
playing the role of a term order such as the lexicographic order.
\end{remark}

Let $\C$ and $\C'$ be categories and let $\Phi:\C' \rightarrow \C$ be a functor. We say that $\Phi$ satisfies property \textbf{(F)} if, for all objects $x$ of $\C$, 
there exists a finite collection of objects $y_1,\ldots,y_r$ of $\C'$ and morphisms $f_i:x \rightarrow \Phi(y_i)$ such that, for any object $y$ of $\C'$ and any morphism $f:x \rightarrow \Phi(y)$, there exists a morphism $g:y_i \rightarrow y$ with $f = \Phi(g) \circ f_i$. 
We say $\C$ is \textbf{quasi-Gr\"obner} if there exists a Gr\"obner category $\C$ and an essentially surjective functor $\Phi:\C' \rightarrow \C$ satisfying property (F).

The motivation for these definitions comes from the following two theorems, both of which are of fundamental
importance in our work.

\begin{theorem}\label{fg}{\em \cite[Proposition 3.2.3]{sam}}
If $\Phi: \cC \rightarrow \cC'$ has property (F) and $M$ is a finitely generated $\cC'$-module,
then $\Phi^*M$ is a finitely generated $\cC$-module.
\end{theorem}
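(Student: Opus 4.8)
The plan is to build an explicit finite generating set for $\Phi^*M$ out of a finite generating set for $M$, using property (F) to transport the generators through $\Phi$. Recall that, since $M$ is a functor from $\cC'$ to $k$-modules and $\Phi\colon\cC\to\cC'$, the pullback $\Phi^*M = M\circ\Phi$ is a $\cC$-module with $(\Phi^*M)(y) = M(\Phi(y))$, whose transition map along a morphism $h\colon y\to y'$ of $\cC$ is $M(\Phi(h))$.

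First I would choose generators for $M$: finitely many objects $x_1,\ldots,x_s$ of $\cC'$ and elements $w_k\in M(x_k)$ such that, for every object $z$ of $\cC'$, the module $M(z)$ is spanned over $k$ by the elements $M(g)(w_k)$ as $k$ ranges over $1,\ldots,s$ and $g$ over all morphisms $x_k\to z$. Then, for each $k$, I would apply property (F) to the object $x_k$ of $\cC'$, obtaining finitely many objects $y_{k,1},\ldots,y_{k,r_k}$ of $\cC$ together with morphisms $f_{k,i}\colon x_k\to\Phi(y_{k,i})$ satisfying the factorization property in the definition of (F). The claim is that the finitely many elements $u_{k,i} := M(f_{k,i})(w_k)\in M(\Phi(y_{k,i})) = (\Phi^*M)(y_{k,i})$ generate $\Phi^*M$ as a $\cC$-module.

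To verify this I would take an arbitrary object $y$ of $\cC$ and an arbitrary element $\xi\in(\Phi^*M)(y) = M(\Phi(y))$. Applying the spanning property of the $w_k$ with $z = \Phi(y)$ expresses $\xi$ as a $k$-linear combination of elements $M(g)(w_k)$ with $g\colon x_k\to\Phi(y)$ a morphism of $\cC'$. Since $y$ is an object of $\cC$ and $g$ is a morphism $x_k\to\Phi(y)$, property (F) applied to $x_k$ furnishes an index $i$ and a morphism $h\colon y_{k,i}\to y$ of $\cC$ with $g = \Phi(h)\circ f_{k,i}$; functoriality of $M$ then gives $M(g)(w_k) = M(\Phi(h))\big(M(f_{k,i})(w_k)\big) = (\Phi^*M)(h)(u_{k,i})$. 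Hence every summand, and therefore $\xi$, lies in the $k$-span of images of the $u_{k,i}$ under morphisms of $\cC$, which is exactly finite generation of $\Phi^*M$.

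I do not expect a real obstacle here: the argument is pure bookkeeping. The one point that needs care is the variance and orientation of $\Phi$ — the theorem is stated with $\Phi\colon\cC\to\cC'$, so property (F) must be read with objects of the \emph{target} $\cC'$ playing the role of ``$x$'' in its definition, and the factorization it provides for $g\colon x_k\to\Phi(y)$ must hold for the specific $y$ under consideration, which is why the definition quantifies over all objects $y$. It is also worth observing that property (F) is invoked only $s$ times, once per generator of $M$, so the generating set produced for $\Phi^*M$ is genuinely finite; essential surjectivity of $\Phi$ is not used for this statement.
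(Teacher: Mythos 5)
Your proof is correct and is the standard argument: pull back a finite generating set for $M$ along the morphisms furnished by property (F), then use the factorization to show these pullbacks generate $\Phi^*M$. The paper itself does not prove this statement but cites it from Sam--Snowden \cite[Proposition 3.2.3]{sam}, whose proof is exactly the one you give; you also correctly flag the one genuine subtlety, namely that the orientation of $\Phi$ in Theorem \ref{fg} is reversed relative to the paper's statement of the definition of property (F), so that objects of the target $\cC'$ play the role of ``$x$.''
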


\begin{theorem}\label{repcatnoeth}{\em \cite[Theorem 1.1.3]{sam}}
If $\C$ is quasi-Gr\"obner and $k$ is a Noetherian commutative ring, then $\Rep_k(\C)$ is locally Noetherian.
\end{theorem}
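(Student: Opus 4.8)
This is \cite[Theorem 1.1.3]{sam}; the strategy I would follow is that of Sam and Snowden, which splits into a reduction from the quasi-Gr\"obner case to the Gr\"obner case and a core Noetherianity statement for Gr\"obner categories, the latter being where (G1) and (G2) are actually used.

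For the reduction, suppose $\C$ is quasi-Gr\"obner, witnessed by a Gr\"obner category $\cD$ and an essentially surjective functor $\Phi:\cD\to\C$ with property (F), and suppose the Gr\"obner case is already known. Given a finitely generated $M\in\Rep_k(\C)$ and a submodule $N\subseteq M$, I would first invoke Theorem \ref{fg} to conclude that $\Phi^*M$ is finitely generated over $\cD$, so that its submodule $\Phi^*N$ is finitely generated over $\cD$ by the Gr\"obner case. It then remains to descend this conclusion along $\Phi$: a finite generating set of $\Phi^*N$ consists of elements of the groups $N(\Phi(y))$, and using property (F) together with essential surjectivity of $\Phi$ one checks by a short diagram chase that these elements, combined with the morphisms $f_i$ supplied by (F), generate $N$ as a $\C$-module.

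The Gr\"obner case is the substantive part. First I would make the standard reduction, via short exact sequences and induction on the number of generators, to proving that every submodule $N$ of a principal projective $P_x=k[\Mor_\C(x,-)]$ is finitely generated. Now the combinatorics enters: $P_x$ is governed by the set $\C_x$ of morphisms out of $x$ (modulo isomorphism of the target) with its divisibility quasi-order, and property (G2) says precisely that $\C_x$ is well-quasi-ordered, so every up-closed subset of $\C_x$ has only finitely many minimal elements. Property (G1) provides a linear order $\preceq$ on $\C_x$ compatible with post-composition, which lets one assign to each nonzero $v\in N(y)$ an initial morphism $\operatorname{in}(v)\in\Mor_\C(x,y)$; as $y$ and $v$ range over everything, the set of initial morphisms is up-closed in $\C_x$, hence has finitely many minimal elements. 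Picking one element of $N$ realizing each minimal initial morphism and running a Gr\"obner-style division algorithm should show that these finitely many elements generate $N$.

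The hard part is making that division algorithm terminate while simultaneously controlling coefficients over a general Noetherian ring $k$ rather than a field --- and this is exactly why both (G1) and (G2) are needed. The term order $\preceq$ from (G1) ensures that each reduction step strictly lowers the initial morphism, and the well-quasi-ordering from (G2) rules out infinite strictly descending sequences, so the reduction process cannot continue forever; meanwhile, for each fixed value of the initial morphism the relevant coefficients form a $k$-module, and Noetherianity of $k$ together with the finiteness of the set of minimal initial morphisms keeps this bookkeeping under control. Assembling these ingredients is the categorical analogue of the Hilbert basis theorem carried out in \cite[Section 3]{sam}.
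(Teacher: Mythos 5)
The paper does not give a proof of this result: it is cited verbatim as \cite[Theorem 1.1.3]{sam}, and the burden of proof rests entirely with Sam and Snowden. Your outline correctly reconstructs the shape of their argument --- reduce from quasi-Gr\"obner to Gr\"obner via property (F) and a pullback lemma, then handle the Gr\"obner case by reducing to submodules of principal projectives and running a Gr\"obner-basis-style argument on $\C_x$ --- so there is no structural divergence to discuss.

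One technical caveat on your final paragraph, though: (G2) asserts well-quasi-ordering of $\C_x$ under the \emph{divisibility} quasi-order $\leq$, while the reduction steps in your division algorithm strictly decrease the \emph{term order} $\preceq$ from (G1). These are different relations, and wqo of $\leq$ does not on its own preclude an infinite strictly $\preceq$-descending sequence; nor does (G1), as stated in the paper, require $\preceq$ to be a well-order. The way the Sam--Snowden argument actually closes is via ascending chain conditions rather than termination of a reduction: one shows that the map sending a submodule $N\subseteq P_x$ to its initial data (the assignment $f\mapsto\{\text{leading coefficients of elements of }N\text{ with leading term }f\}\subseteq k$) lands in a poset of ``monomial ideals with $k$-coefficients'' that satisfies ACC, with wqo of $\C_x$ and Noetherianity of $k$ supplying the ACC; and (G1) is then used to show that two nested submodules with equal initial data coincide. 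This gives ACC on submodules of $P_x$ directly. Morally it is the Hilbert basis theorem argument you describe, but the well-foundedness is extracted from the divisibility order, not from $\preceq$.
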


\subsection{The category of rigidified graphs of fixed genus is Gr\"obner}
We begin with the following translation of Barter's work to our setting.

\begin{theorem}\label{grobnertrees}
The category $\cPT^\op \cong \mathcal{PG}_0^\op$ of planar rooted trees is Gr\"obner.
\end{theorem}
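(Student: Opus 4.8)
The plan is to deduce this from Barter's results via the equivalences recorded in Remark \ref{barter}, namely that $\cPT^\op$ is equivalent to the category $\PTi$ of planar rooted trees with pointed, depth-first-order-preserving order embeddings on vertex sets. First I would observe that being Gr\"obner is invariant under equivalence of categories (properties (G1), (G2), and the no-nontrivial-endomorphisms condition all pass through an equivalence, since $\C_x$ depends only on the equivalence class of $\C$ and the object $x$), so it suffices to prove that $\PTi$ is Gr\"obner, and for that one can quote the relevant result of Barter \cite{Barter} directly, or redo the short argument. The identification $\mathcal{PG}_0 \cong \cPT$ (hence $\mathcal{PG}_0^\op \cong \cPT^\op$) is already noted in Section \ref{sec:rigid}, so the statement about $\mathcal{PG}_0^\op$ comes for free once the statement about $\cPT^\op$ is established.

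The substance, then, is checking the three defining conditions for $\PTi$. For the absence of nontrivial endomorphisms: a pointed order embedding of a finite vertex set into itself that preserves the depth-first order is a bijection, and being an order-isomorphism fixing the root it must be the identity. For property (G1), I would fix a planar rooted tree $(T,v)$ and linearly order $\PTi_{(T,v)}$ — the set of order embeddings out of $(T,v)$, up to the appropriate equivalence — by encoding each embedding as the word giving the images (in depth-first order) of the vertices of $T$, and then comparing these words with a suitable lexicographic-type order; one then checks this order is compatible with post-composition in the required (contravariant, after passing to $\cPT^\op$) sense. For property (G2), the key point is that $\PTi_{(T,v)}$, viewed with its natural divisibility quasi-order, has no bad sequences; this is where one invokes a Higman-type / well-partial-order argument on labeled planar trees, exactly as in Barter's treatment and as reproduced in \cite[Section 2]{PR-trees}.

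I expect the main obstacle to be purely bookkeeping rather than conceptual: namely, carefully matching up the combinatorial model (pointed depth-first-order-preserving embeddings on vertex sets) with the categorical model (contractions of planar rooted trees) so that the quasi-order $\leq$ on $\cPT^\op_{(T,v)}$ and the term order $\preceq$ really do correspond to divisibility of words and a lexicographic order, with all the variance conventions (opposite categories, $\varphi \leftrightarrow \varphi^* = \max\varphi^{-1}(-)$) lined up correctly. Since \cite[Proposition 2.4]{PR-trees} already supplies the dictionary and Barter \cite{Barter} already supplies the well-quasi-order and term-order arguments for $\RTi$ and $\PTi$, the proof is essentially a citation: $\PTi$ is Gr\"obner by Barter's work, $\PTi \cong \cPT^\op$ by Remark \ref{barter}, and $\cPT^\op \cong \mathcal{PG}_0^\op$ by the definition of $\cPGg$; being Gr\"obner is preserved under equivalence, so we are done.
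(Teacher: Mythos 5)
Your proposal is correct and matches the paper's proof, which likewise simply cites Barter's result that $\PTi$ is Gr\"obner and invokes Remark \ref{barter} to transfer this to $\cPT^\op$. The extra detail you give about checking (G1), (G2), and the endomorphism condition is a faithful unpacking of what Barter proves, but the paper treats the whole thing as a two-line citation.
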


\begin{proof}
Barter proves that $\PTi$ is Gr\"obner \cite{Barter}, and the same is true
of $\cPT^\op$ by Remark \ref{barter}. 
\end{proof}

Our goal in this section is to extend Theorem \ref{grobnertrees} to the category $\cPGgop$ for arbitrary genus $g$.
We begin with the following corollary of Theorem \ref{grobnertrees}.

\begin{corollary}\label{G1}
For any natural number $g$, the category $\cPGgop$ satisfies property (G1).
\end{corollary}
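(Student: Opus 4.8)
The goal is to show that $\cPGgop$ satisfies property (G1): for each rigidified graph $\Gamma=(G,T,v,\tau)$ of genus $g$, we must equip the set $(\cPGgop)_\Gamma$ of equivalence classes of morphisms out of $\Gamma$ with a linear order $\preceq$ that is compatible with post-composition. The key observation is that a morphism $\varphi$ out of $\Gamma$ in $\cPGgop$ — that is, a morphism into $\Gamma$ in $\cPGg$ — is a contraction that, by definition of $\cPGg$, restricts to a contraction of the planar rooted spanning trees and is compatible with the ordering and orientation of the $g$ extra edges. Since the extra edges are never contracted and are rigidly labelled, such a morphism is determined by its restriction to the spanning tree together with the (trivial, because rigid) bookkeeping on the extra edges. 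In other words, there is a faithful forgetful functor $U\colon \cPGgop \to \cPT^\op$ sending $\Gamma$ to its spanning tree $(T,v)$ and sending a morphism to its restriction, and $U$ induces an injection $(\cPGgop)_\Gamma \hookrightarrow (\cPT^\op)_{(T,v)}$.

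First I would make the functor $U$ and the induced map on $(-)_\Gamma$ precise, checking that a morphism in $\cPGg$ into $\Gamma$ really is recorded faithfully by the underlying contraction of planar rooted trees — here the point is that once the spanning tree data and the depth-first order are fixed, the isomorphism $\tau\colon R_g \to G/T$ has no nontrivial automorphisms to absorb, so distinct morphisms restrict to distinct morphisms of planar rooted trees, and conversely the "compatible with order and orientation of extra edges'' condition is either satisfiable in a unique way or not at all. Next, I would invoke Theorem \ref{grobnertrees}: $\cPT^\op$ is Gr\"obner, hence satisfies (G1), so $(\cPT^\op)_{(T,v)}$ carries a linear order $\preceq_T$ compatible with post-composition. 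Then I would pull this order back along the injection $(\cPGgop)_\Gamma \hookrightarrow (\cPT^\op)_{(T,v)}$: define $\varphi \preceq \psi$ in $(\cPGgop)_\Gamma$ iff $U(\varphi)\preceq_T U(\psi)$. This is automatically a linear order because it is the restriction of a linear order along an injection.

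The remaining step is to verify compatibility with post-composition: if $f,g\colon \Gamma\to \Gamma'$ in $\cPGgop$ (morphisms out of $\Gamma$), $h\colon \Gamma'\to\Gamma''$, and $f\preceq g$, then $h\circ g \preceq h\circ f$. This follows because $U$ is a functor, so $U(h\circ f)=U(h)\circ U(f)$ and likewise for $g$; then $U(f)\preceq_T U(g)$ gives $U(h)\circ U(g)\preceq_T U(h)\circ U(f)$ by compatibility of $\preceq_T$ in $\cPT^\op$, which is exactly $h\circ g\preceq h\circ f$ after applying the definition of $\preceq$. One also needs that $U$ respects the equivalence relation defining the sets $\C_x$ (two morphisms differing by an isomorphism of the target go to equivalent morphisms), which is immediate since $U$ carries isomorphisms of rigidified graphs to isomorphisms of planar rooted trees — and in fact rigidified graphs have no nontrivial automorphisms, so the equivalence relation is trivial on both sides.

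I expect the only real subtlety — it is more a matter of care than of difficulty — to be the faithfulness claim: one must be sure that the extra-edge data genuinely contributes nothing, i.e. that a morphism of $\cPGg$ into $\Gamma$ cannot fail to be determined by its tree part. This rests on the design principle stated in Section \ref{sec:rigid} that rigidified graphs have trivial automorphism groups, together with the fact that contractions in $\cPGg$ contract only tree edges and must match up the ordered oriented extra edges on the nose. Once that is in hand, everything else is formal transport of the linear order along a faithful functor, so no computation is needed.
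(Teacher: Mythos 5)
Your overall strategy—pull back the Barter linear order from $\cPT^\op$ along the spanning-tree restriction—is the same as the paper's, but your argument has a genuine gap: the claimed injectivity of the induced map $(\cPGgop)_\Gamma \to (\cPT^\op)_{(T,v)}$ is false, so ``restrict a linear order along an injection'' does not apply. The point you are missing is that $(\cPGgop)_\Gamma$ ranges over morphisms with varying source. Two contractions $\varphi:\Gamma'\to\Gamma$ and $\psi:\Gamma''\to\Gamma$ in $\cPGg$ can have the same (or equivalent) spanning-tree restrictions while $\Gamma'$ and $\Gamma''$ are non-isomorphic rigidified graphs, because the extra edges of $\Gamma'$ and $\Gamma''$ may attach at different vertices within a fiber of the tree restriction. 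Concretely, take $g=1$ and $\Gamma=R_1$; let $T'$ be the two-vertex path $a\text{--}b$ with root $a$, let $\Gamma'$ carry its extra loop at $a$, and let $\Gamma''$ carry its extra loop at $b$. Both contract to $R_1$ by collapsing the tree edge, giving identical restrictions $T'\to T$, yet $\Gamma'\not\cong\Gamma''$ as rigidified graphs (any graph isomorphism would have to send $a$ to $b$, destroying the root). So the pullback relation fails antisymmetry; it is only a total preorder.

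Your faithfulness observation (``a morphism of $\cPGg$ into $\Gamma$ is determined by its tree part'') is correct and is exactly what the paper uses, but it is a statement about morphisms with a \emph{fixed} source and target, which is weaker than injectivity on $(\cPGgop)_\Gamma$. The paper's proof proceeds as you do up to obtaining a compatible \emph{partial} order from the pullback, then takes an arbitrary linear refinement $\preceq$, and finally verifies that $\preceq$ is still compatible with composition. That last verification is where fixed-source faithfulness earns its keep: the (G1) compatibility condition only ever compares two morphisms $\varphi,\psi$ out of the \emph{same} source $\Gamma'$, and if $\varphi\neq\psi$ then their tree restrictions differ, so the pullback preorder already strictly orders them and the refinement necessarily agrees with it on that pair; compatibility then transfers from the Barter order. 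Your plan is salvageable by replacing the injectivity claim with this refinement-plus-verification step.
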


\begin{proof}
Fix a rigidified graph $(G,T,v,\tau)$ of genus $g$.
We need to define a linear order $\preceq$ on equivalence classes of contractions of rigidified graphs with target $(G,T,v,\tau)$ 
(since we are working with the opposite category) that is compatible with pre-composition.
By Theorem \ref{grobnertrees}, we know that $\cPT^\op$ satisfies property (G1), which means that we have a linear
order on contractions of planar rooted trees with target $(T,v)$ which is compatible with pre-composition.  
Since a contraction of rigidified graphs restricts to a contraction of planar rooted trees, this induces a partial order on 
contractions of rigidified graphs with target $(G,T,v,\tau)$ that is compatible with pre-composition.
Let $\preceq$ be any linear refinement of this partial order.  We claim that $\preceq$ is also compatible with pre-composition.

To see this, suppose that $\varphi,\psi:(G',T',v',\tau')\to (G,T,v,\tau)$ are contractions with $\varphi\prec\psi$ 
and $\sigma:(G'',T'',v'',\tau'')\to (G',T',v',\tau')$
is an arbitrary contraction.  Since $\varphi\neq\psi$ and a contraction of rigidified graphs is determined by its restriction to
the spanning tree, the restrictions of $\varphi$ and $\psi$ to $(T',v')$ must be distinct.  This implies
that these restrictions are comparable in the Barter order, and therefore that the restrictions of $\varphi\circ\sigma$ and $\psi\circ\sigma$
to $(T'',v'')$ are comparable in the Barter order.  Since $\preceq$ refines the Barter order, we may conclude 
that $\varphi\circ\sigma\prec\psi\circ\sigma$.
\end{proof}

Our next task is to prove that $\cPGgop$ satisfies property (G2). 
We begin by stating a version of Kruskal's tree theorem for labeled planar rooted trees.
Let $S$ be a finite set.  If $(T,v,\ell)$ and $(T',v',\ell')$ are $S$-labeled planar rooted trees, we define $(T',v',\ell')\leq (T,v,\ell)$
if there exists a contraction from $(T,v,\ell)$ to $(T',v',\ell')$.  This defines a quasi-order on the set of isomorphism classes of $S$-labeled planar rooted trees.

\begin{theorem}\label{kruskal}
Let $S$ be a finite set.
The quasi-order on the set of isomorphism classes of $S$-labeled planar rooted trees admits no bad sequences.
\end{theorem}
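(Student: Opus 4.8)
The plan is to deduce Theorem~\ref{kruskal} from the classical Kruskal tree theorem by encoding an $S$-labeled planar rooted tree as an ordinary finite rooted tree (or planar rooted tree) whose vertices are labeled by a well-quasi-ordered set, and then checking that the quasi-order we have defined refines (or coincides with) the embeddability quasi-order to which Kruskal's theorem applies. Concretely, recall from Remark~\ref{barter} that a contraction $\varphi:(T,v,\ell)\to(T',v',\ell')$ corresponds, under the equivalence $\cPT^\op\cong\PTi$, to a pointed depth-first-order-preserving embedding $\varphi^*:(T',v')\into(T,v)$, with the labeling compatibility $\ell'(w')=\ell\big(\varphi^*(w')\big)=\ell\big(\max\varphi^{-1}(w')\big)$. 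So ``$(T',v',\ell')\leq(T,v,\ell)$'' is precisely the statement that $(T',v')$ embeds into $(T,v)$ as a planar rooted tree by a map that matches up labels. This is exactly the hypothesis of the planar (i.e., left-to-right-order-preserving) version of Kruskal's theorem with labels in the finite poset $S$ (equipped with the trivial/discrete partial order, under which $S$ is automatically a well-quasi-order because it is finite).

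First I would state carefully which form of Kruskal's theorem I am invoking: for any well-quasi-ordered label set $Q$, the set of finite rooted planar trees with vertices labeled by $Q$, quasi-ordered by ``there is an infimum-preserving (or just root-and-order-preserving, depending on the exact variant one needs) embedding that is $\leq_Q$ on labels,'' has no bad sequences. Since the paper already uses Barter's work heavily and Barter's results are themselves a repackaging of Kruskal/Nash-Williams, I would cite the standard reference (e.g., Nash-Williams' minimal bad sequence argument, or Kruskal's original paper) rather than reprove it. Second, I would spell out the dictionary: a morphism in $\cPT$ is a contraction of planar rooted trees, which by Barter's equivalence is a depth-first-order-preserving pointed embedding in the opposite direction; the $S$-labeled refinement defined in Section~\ref{sec:trees} is engineered (this is the content of the discussion around $\varphi$-maximal vertices) precisely so that the label of a vertex in the target equals the label of the corresponding embedded vertex in the source. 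Hence a bad sequence for the $S$-labeled contraction quasi-order is literally a bad sequence for the labeled planar-tree embedding quasi-order, and the latter cannot exist.

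The main obstacle — really the only nontrivial point — is making sure the \emph{planar} structure is handled correctly: one must use the version of Kruskal's theorem that controls the left-to-right order on children (equivalently the induced depth-first order), not just the unordered rooted-tree version, and one must check that ``depth-first-order-preserving embedding'' on the $\PTi$ side translates to exactly the right notion of embedding on the tree-minor side. I expect this to be routine given that \cite[Proposition~2.4]{PR-trees} and \cite{Barter} have already set up the $\cPT^\op\cong\PTi$ equivalence and identified $\varphi^*(w')=\max\varphi^{-1}(w')$; the $S$-labeling simply rides along on the vertices. A secondary, even more trivial, point is that $S$ being finite is a well-quasi-order, so no hypothesis on $S$ beyond finiteness is needed. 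I would close by noting that an essentially equivalent statement is proved in \cite{PR-trees} in the unlabeled case, and that adding finitely many labels preserves the property of admitting no bad sequences (e.g.\ by the standard fact that a finite product, or a finite ``coloring,'' of well-quasi-orders is a well-quasi-order), so one could alternatively bootstrap from \cite{PR-trees} rather than going back to Kruskal directly.
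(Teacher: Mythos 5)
Your main line of attack---translate contractions to order embeddings via Barter's equivalence, then invoke a labeled planar version of Kruskal's tree theorem---is essentially the approach the paper takes. The paper is a bit more precise about exactly which literature to combine: it cites Barter's Lemma 10 for the planar unlabeled case and Draisma's Theorem 1.2 for the labeled non-planar case, and notes that both proofs are instances of Nash-Williams' minimal-bad-sequence argument that can be trivially modified to handle both planarity and labels simultaneously. Your proposal, which asks the reader to accept the existence of a ``planar labeled Kruskal theorem,'' is doing the same thing with less explicit sourcing; the substance is identical.

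However, your closing paragraph proposes a second, ``bootstrap'' route that has a genuine gap. You assert that since the unlabeled planar case is already known and $S$ is finite, one can conclude the $S$-labeled case ``by the standard fact that a finite product, or a finite coloring, of well-quasi-orders is a well-quasi-order.'' This does not work. The finite-product and finite-disjoint-union facts about WQOs concern comparing fixed-length tuples or elements with a single color, respectively; they say nothing about trees carrying unboundedly many labels at their vertices, where the \emph{embedding itself} must be chosen to match the labels. Concretely, a bad sequence of $S$-labeled trees need not yield a bad sequence of unlabeled trees after forgetting labels, because an unlabeled embedding $T_i\into T_j$ may fail to respect the labelings; and because the trees have arbitrarily many labeled vertices, there is no finite tuple of label data one can apply a finite-product WQO argument to. The only way to get the labeled conclusion is to re-run the Nash-Williams minimal-bad-sequence argument carrying the labels through the inductive step (exactly as Draisma does in the non-planar case, and as Barter does for planarity in the unlabeled case). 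So the bootstrap alternative should be dropped, or replaced with the observation that the proofs---not merely the statements---of the existing results extend.
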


\begin{proof}
After using Remark \ref{barter} to translate between order embeddings and contractions, the case where $S$ is a singleton
is proved in \cite[Lemma 10]{Barter}.  On the other hand, the theorem is proved for general $S$, but with rooted trees
instead of planar rooted trees, in \cite[Theorem 1.2]{Draisma}.  Both proofs are essentially the same, and are in fact modeled
on the original proof of Nash-Williams for unlabeled rooted trees \cite{Nash-Williams}.  These arguments can be trivially modified
to cover the result stated above.
\end{proof}

The following corollary is a relative version of Theorem \ref{kruskal}.
The case where $S$ is a singleton is proved in \cite[Theorem 9]{Barter}.  However, it turns out that the proof is greatly simplified
by allowing labels, as we demonstrate below.

\begin{corollary}\label{labeled G2}
Let $S$ be a finite set and let $(T,v,\ell)$ be an $S$-labeled planar rooted tree.
The set $(\cPT_{\!S}^\op)_{(T,v,\ell)}$ admits no bad sequences.
\end{corollary}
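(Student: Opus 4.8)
The plan is to deduce this from the ``absolute'' version, Theorem \ref{kruskal}, by absorbing the data of a contraction into a finite labeling. First I would rephrase everything in terms of order embeddings, using the equivalence of Remark \ref{barter} in its evident $S$-labeled form. An element of $(\cPT_{\!S}^\op)_{(T,v,\ell)}$ is an isomorphism class of $S$-labeled contractions $\varphi\colon (U,u,m)\to (T,v,\ell)$, and such a $\varphi$ corresponds to a pointed, depth-first-order-preserving order embedding $\iota=\varphi^*\colon (T,v)\hookrightarrow (U,u)$ (with $\varphi^*(w)=\max\varphi^{-1}(w)$) that pulls back $m$ to $\ell$, i.e.\ $m\circ\iota=\ell$. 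Unwinding the definition of the quasi-order, one finds that $\iota_1\leq\iota_2$ in $(\cPT_{\!S}^\op)_{(T,v,\ell)}$ if and only if $\iota_2$ factors through $\iota_1$ by a label-preserving order embedding. Thus a bad sequence amounts to a sequence of order embeddings $\iota_k\colon (T,v,\ell)\hookrightarrow (U_k,u_k,m_k)$, $k=0,1,2,\ldots$, such that for no $i<j$ does there exist a label-preserving order embedding $\theta\colon (U_i,u_i,m_i)\hookrightarrow (U_j,u_j,m_j)$ with $\theta\circ\iota_i=\iota_j$; the goal is to show no such sequence exists.

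Given such a sequence, the key step is to record, at each vertex of $U_k$, both its $m_k$-label and its position relative to $\iota_k$. Set $\tilde S:=S\times\big(V(T)\sqcup\{*\}\big)$, a finite set, and equip $U_k$ with the $\tilde S$-labeling
\[
\tilde m_k(w):=\big(m_k(w),\,p_k(w)\big),
\]
where $p_k(w):=\iota_k^{-1}(w)$ if $w$ lies in the image of $\iota_k$ and $p_k(w):=*$ otherwise. Now I would apply Theorem \ref{kruskal} with label set $\tilde S$ to the sequence $(U_k,u_k,\tilde m_k)$: it is not a bad sequence, so there are indices $i<j$ and a label-preserving order embedding $\theta\colon (U_i,u_i,\tilde m_i)\hookrightarrow (U_j,u_j,\tilde m_j)$, that is, $\tilde m_j\circ\theta=\tilde m_i$. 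Reading this equality coordinatewise in $\tilde S$: the first coordinate gives $m_j\circ\theta=m_i$, so $\theta$ is label-preserving for the original labels; the second gives $p_j\circ\theta=p_i$, which forces $\theta(\iota_i(x))=\iota_j(x)$ for every vertex $x$ of $T$, since a vertex of $U_i$ with second coordinate $x\in V(T)$ must be sent to a vertex of $U_j$ with the same second coordinate, and $\iota_j(x)$ is the only such vertex. Hence $\theta\circ\iota_i=\iota_j$, so $\iota_i\leq\iota_j$, contradicting badness.

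I expect the only real difficulty to be bookkeeping: tracking directions through the opposite category and through Barter's equivalence, and noticing that the label set must retain the original labels $m_k$ (not merely the positional data), since the factoring morphism $\theta$ produced by Theorem \ref{kruskal} is required to respect them. There is no genuine combinatorial obstacle; the point --- and the reason this follows quickly rather than requiring the more hands-on argument of \cite[Theorem~9]{Barter} in the unlabeled case --- is precisely that the positional information $\iota_k^{-1}(\cdot)$ can be stored in a finite labeling, after which Theorem \ref{kruskal} does all the work.
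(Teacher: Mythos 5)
Your proposal is correct and is essentially identical to the paper's own proof: both absorb the data of the contraction (equivalently, the order embedding $\iota_k$) into an enlarged finite label set $S\times(\Vert(T)\sqcup\{*\})$ that records whether a vertex is $\varphi$-maximal and, if so, its image in $T$, and then invoke the labeled Kruskal theorem (Theorem \ref{kruskal}). The paper phrases the labeling in the contraction picture and defers the final verification to the order-embedding picture via Remark \ref{barter}, where you carry out the translation explicitly from the start, but the argument is the same.
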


\begin{proof}
An element of $(\cPT_{\!S}^\op)_{(T,v,\ell)}$ is represented by a pair consisting of an $S$-labeled planar rooted tree $(T',v',\ell')$
and a contraction $\varphi':(T',v',\ell')\to (T,v,\ell)$.  Let $U := S \times \big(\Vert(T)\sqcup\{0\}\big)$ and
define a $U$-labeled planar rooted tree
$(T',v',\ell'_U)$ by putting $$\ell'_U(w') := 
\begin{cases}(\ell'(w'), \varphi'(w')) &\text{if $w'$ is $\varphi'$-maximal}\\
(\ell'(w'), 0) &\text{otherwise.}
\end{cases}
$$
Suppose that $\varphi':(T',v',\ell')\to (T,v,\ell)$ and $\varphi'':(T'',v'',\ell'')\to (T,v,\ell)$ represent two elements of $(\cPT_{\!S}^\op)_{(T,v,\ell)}$
and let $(T',v',\ell'_U)$ and $(T'',v'',\ell''_U)$ be the corresponding $U$-labeled planar rooted trees.
We have $\varphi'\leq\varphi''$
with respect to the quasi-order on $(\cPT_{\!S}^\op)_{(T,v,\ell)}$ if and only if there exists an $S$-labeled 
contraction $\psi:(T'',v'',\ell'')\to (T',v',\ell')$ such that $\varphi''=\varphi'\circ\psi$.
On the other hand, we have $(T',v',\ell'_U)\leq(T'',v'',\ell''_U)$
with respect to the quasi-order on isomorphism classes of $U$-labeled planar rooted trees
if and only if there exists a $U$-labeled 
contraction $\psi:(T'',v'',\ell''_U)\to (T',v',\ell'_U)$.

We claim that an $S$-labeled contraction $\psi$ is a $U$-labeled contraction if and only if $\varphi''=\varphi'\circ\psi$.
The easiest way to see this is to use Remark \ref{barter} to translate from contractions to pointed order embeddings,
as the statement becomes tautological in that setting.  This implies that 
any bad sequence in $(\cPT_{\!S}^\op)_{(T,v,\ell)}$ induces a bad sequence of isomorphism classes of $U$-labeled planar rooted trees,
and Theorem \ref{kruskal} tells us that no such sequences exist.
\end{proof}

Let $S = \{0,1\}^{2g}$.  Given a ridigified graph $(G,T,v,\tau)$ of genus $g$, we construct an $S$-labeled planar rooted graph $(T,v,\ell)$ as follows.
Recall that $\tau$ induces an ordering and an orientation on the $g$ extra edges of $G$.
For each $1\leq i\leq g$, let $w_{2i-1}$ be the vertex at which the $\ith$ extra edge originates and let $w_{2i}$ be the vertex at which 
the $\ith$ extra edge terminates.
Then for each vertex $w$ and each $1\leq j\leq 2g$, define the $j^\text{th}$ component of $\ell(w)$ to be 1 if $w\geq w_j$ and 0 otherwise.

\begin{lemma}\label{extra labels}
Let $(G,T,v,\tau)$ and $(G',T',v',\tau')$ be rigidified graphs of genus $g$, and let $(T,v,\ell)$ and $(T',v',\ell')$
be the associated $S$-labeled planar rooted graphs.  Let $\varphi:(T,v)\to (T',v')$ be a contraction of planar rooted graphs.
Then $\varphi$ induces a contraction of rigidified graphs if and only if it is compatible with the $S$-labeling.
\end{lemma}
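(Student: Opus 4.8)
The plan is to unwind both sides of the claimed equivalence into statements about where the extra edges go, and then observe that these two statements coincide because the $S$-labeling was precisely engineered to record the endpoints of the extra edges. First I would recall that a contraction $\varphi:(T,v)\to(T',v')$ of planar rooted spanning trees extends to a contraction of the ambient rigidified graphs $(G,T,v,\tau)\to(G',T',v',\tau')$ exactly when, after contracting the tree edges prescribed by $\varphi$, the $i^{\text{th}}$ extra edge of $G$ lands on (an edge with the same endpoints as) the $i^{\text{th}}$ extra edge of $G'$, with matching orientation. Since the extra edges are not themselves contracted, the endpoint of an extra edge of $G$ at a vertex $u$ is sent to the vertex $\varphi(u)$ of $G'$; so the condition is simply that for each $1\le i\le g$, the originating vertex $w_{2i-1}$ of the $i^{\text{th}}$ extra edge of $G$ maps under $\varphi$ to the originating vertex $w'_{2i-1}$ of the $i^{\text{th}}$ extra edge of $G'$, and likewise $\varphi(w_{2i})=w'_{2i}$ for the terminal vertices.

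Next I would translate the labeling condition. By construction, the $j^{\text{th}}$ component of $\ell(w)$ is $1$ iff $w\ge w_j$ in the rooted tree partial order, i.e. iff $w_j$ lies on the path from $w$ to the root, equivalently $w_j$ is a vertex of the subtree hanging below... — more precisely $w$ lies in the subtree rooted at $w_j$. Thus the function $w\mapsto\big(\ell(w)\big)_j$ is the indicator of the principal down-set $\{w : w\le w_j\}$, and knowing this indicator is equivalent to knowing the vertex $w_j$ (it is the maximum of the set where the indicator is $1$). The hypothesis that $\varphi$ is compatible with the $S$-labeling means, per the convention fixed in Section \ref{sec:trees}, that $\ell'\circ\varphi(w)=\ell(w)$ for every $\varphi$-maximal vertex $w$; equivalently, passing to the pointed order embedding $\varphi^*:(T',v')\to(T,v)$ of Remark \ref{barter}, that $\ell = (\varphi^*)^*\ell'$, i.e. $\ell(\varphi^*(w'))=\ell'(w')$ for all $w'\in T'$. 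I would then argue that this identity of $S$-labelings holds iff $\varphi^*(w'_j) = w_j$ for each $j$ (using that $\varphi^*$ is an order embedding, so it carries the principal down-set of $w'_j$ isomorphically onto an initial segment of the down-set of its image, and the $j^{\text{th}}$ label component detects exactly that down-set), and finally that $\varphi^*(w'_j)=w_j$ is equivalent to $\varphi(w_j)=w'_j$ together with $w_j$ being $\varphi$-maximal — which, using the description $\varphi^*(w')=\max\varphi^{-1}(w')$ from the proof of \cite[Proposition 2.4]{PR-trees}, matches the geometric condition from the previous paragraph. Conversely, if $\varphi(w_j)=w'_j$ holds for all $j$, the extra edge of $G$ attached at $w_j$ has both endpoints with well-defined images and maps to the corresponding extra edge of $G'$, so $\varphi$ extends to a contraction of rigidified graphs.

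The main obstacle I anticipate is bookkeeping the distinction between "$\varphi$ sends $w_j$ to $w'_j$" and "$\varphi^*$ sends $w'_j$ to $w_j$": these are not literally the same statement, and reconciling them requires invoking $\varphi^*(w')=\max\varphi^{-1}(w')$ and checking that the $w_j$ arising from a genuine contraction of rigidified graphs are automatically $\varphi$-maximal — intuitively because an extra edge attached at a non-maximal vertex of a contracted fiber could instead be attached at the fiber's maximum without changing the quotient graph, so the rigidification convention forces the maximal representative. Once that point is pinned down, the rest is the essentially tautological translation through Remark \ref{barter} that the authors have been using throughout this section, so I would keep that part brief and refer to the order-embedding picture where, as in the proof of Corollary \ref{labeled G2}, the statement becomes immediate.
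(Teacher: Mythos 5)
Your argument pivots on the intermediate claim that $\varphi$ is compatible with the $S$-labeling if and only if $\varphi^*(w'_j)=w_j$ for every $j$, together with the assertion that the $w_j$ are automatically $\varphi$-maximal whenever $\varphi$ extends to a contraction of rigidified graphs. Both halves of this are false, and the heuristic that ``the rigidification convention forces the maximal representative'' is a misconception: the $w_j$ are determined by where the extra edges of $G$ actually attach, and nothing in the definition of a rigidified graph or of a contraction of rigidified graphs prevents an extra edge from attaching at a non-maximal vertex of a collapsed fiber. Concretely, take $g=1$, let $G$ be a triangle on vertices $a,b,c$ with spanning tree $T=\{ab,bc\}$ rooted at $a$, and let the extra edge be $ca$ oriented from $c$ to $a$, so $w_1=c$ and $w_2=a$. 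Let $G'$ be a loop at a single vertex $a'$ with $T'=\{a'\}$ and $w'_1=w'_2=a'$. The contraction $\varphi$ collapsing $T$ to $a'$ does extend to a contraction of rigidified graphs, and it is compatible with the $S$-labeling (the unique $\varphi$-maximal vertex is $a$, and $\ell(a)=(1,1)=\ell'(a')$); yet $\varphi^*(w'_1)=\max\{a,b,c\}=a\neq c=w_1$, so $w_1$ is not $\varphi$-maximal and $\varphi^*(w'_1)\neq w_1$. Thus the chain of equivalences you propose breaks at both the first and last links.

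The paper's proof never asserts $\varphi^*(w'_j)=w_j$. It unwinds compatibility as ``for every $\varphi$-maximal $w$ and every $j$, $w\geq w_j\iff\varphi(w)\geq w'_j$'' and shows this is equivalent to $\varphi(w_j)=w'_j$ for all $j$: in one direction by monotonicity plus the observation that the unique $\varphi$-maximal preimage of $w'_j$ lies above $w_j$; in the other by applying the biconditional at a $\varphi$-maximal vertex above $w_j$ to get $\varphi(w_j)\geq w'_j$, and at the $\varphi$-maximal preimage $u_j$ of $w'_j$ to get $u_j\geq w_j$ and hence $w'_j\geq\varphi(w_j)$. Your proposal could be repaired by following that route rather than insisting on an equality of vertices. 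One further slip: the paper sets $\ell(w)_j=1$ precisely when $w\geq w_j$, so each label component is the indicator of the \emph{up}-set $\{w:w\geq w_j\}$, of which $w_j$ is the \emph{minimum}; your proposal reverses this to a down-set with $w_j$ as maximum, which compounds with the main error above.
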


\begin{proof}
On one hand, $\varphi$ induces a contraction of rigidified graphs if and only if $\varphi(w_j) = w'_j$ for all $j$.
On the other hand, $\varphi$ is compatible with the $S$-labeling if and only if, for all $\varphi$-maximal vertices $w$, $w \geq w_j\iff \varphi(w)\geq w'_j$.

Assume first that $\varphi$ induces a contraction of rigidified graphs, and let $w$ be a $\varphi$-maximal vertex.  If $w\geq w_j$,
then $\varphi(w) \geq \varphi(w_j) = w'_j$.  Conversely, if $\varphi(w)\geq w'_j$, then $w$ lies above
that unique $\varphi$-maximal preimage of $w'_j$, which in turn lies above $w_j$.

Assume next that $\varphi$ is compatible with the $S$-labeling.
For any $j$, we want to show that $\varphi(w_j) = w_j'$.  Since $w_j\geq w_j$, we know that $\varphi(w_j)\geq w_j'$.
To prove the opposite inequality, let $u_j$ be the unique $\varphi$-maximal preimage of $w_j'$.  Then
$$\varphi(u_j) = w_j' \Rightarrow u_j \geq w_j\Rightarrow w_j' = \varphi(u_j) \geq \varphi(w_j).$$
This completes the proof.
\end{proof}

\begin{corollary}\label{G2}
For any natural number $g$, the category $\cPGgop$ satisfies property (G2).
\end{corollary}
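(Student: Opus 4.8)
The plan is to reduce property (G2) for $\cPGgop$ to Corollary \ref{labeled G2} via the $S$-labeling construction that immediately precedes this corollary, where $S = \{0,1\}^{2g}$. The key idea is that Lemma \ref{extra labels} tells us precisely that the extra-edge data of a rigidified graph can be encoded as an $S$-labeling of its spanning tree, in a way compatible with morphisms.

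First I would fix a rigidified graph $(G,T,v,\tau)$ of genus $g$ and let $(T,v,\ell)$ be its associated $S$-labeled planar rooted tree. I would define a map $(\cPGgop)_{(G,T,v,\tau)} \to (\cPT_{\!S}^\op)_{(T,v,\ell)}$ sending (the equivalence class of) a contraction $\varphi:(G',T',v',\tau')\to(G,T,v,\tau)$ to (the equivalence class of) its restriction to spanning trees $\varphi|_{T'}:(T',v',\ell')\to(T,v,\ell)$, where $(T',v',\ell')$ is the $S$-labeled tree associated to $(G',T',v',\tau')$. Lemma \ref{extra labels} guarantees that $\varphi|_{T'}$ is a genuine $S$-labeled contraction, so this map is well defined. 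The next step is to check that this map reflects the quasi-order: if $\varphi|_{T'} \leq \psi|_{T''}$ in $(\cPT_{\!S}^\op)_{(T,v,\ell)}$, then $\varphi \leq \psi$ in $(\cPGgop)_{(G,T,v,\tau)}$. This is where the rigidity is used crucially — a contraction of rigidified graphs is determined by its restriction to the spanning tree (this fact is stated in the proof of Corollary \ref{G1}), so the $S$-labeled contraction $\psi|_{T''} \to \varphi|_{T'}$ witnessing the inequality, which is automatically a contraction of planar rooted trees preserving the extra-edge labels, lifts uniquely to a contraction of rigidified graphs by Lemma \ref{extra labels} again (in the reverse direction). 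Hence it realizes $\varphi \leq \psi$.

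Given that the map reflects $\leq$, any bad sequence $f_0, f_1, f_2, \ldots$ in $(\cPGgop)_{(G,T,v,\tau)}$ would push forward to a sequence in $(\cPT_{\!S}^\op)_{(T,v,\ell)}$ with no $i<j$ satisfying $f_i|_{T} \leq f_j|_{T}$ — that is, a bad sequence there — contradicting Corollary \ref{labeled G2}. Since $(G,T,v,\tau)$ was arbitrary, $\cPGgop$ satisfies (G2).

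The main obstacle I anticipate is the careful bookkeeping in the "reflects $\leq$" step: one must verify that an $S$-labeled contraction between the spanning trees that is compatible with the labeling actually comes from a contraction of rigidified graphs, and that the compatibility of extra-edge orderings and orientations (part of the definition of a morphism in $\cPGg$) is fully captured by the $S$-labeling. Lemma \ref{extra labels} addresses exactly the vertex-incidence part of this (matching $w_j$ to $w'_j$), but one should double-check that the chosen encoding — recording for each endpoint $w_j$ of each oriented extra edge the "up-set" indicator — determines the endpoints uniquely from the labeling (it does, since $w_j$ is the minimal vertex $w$ with $j^{\text{th}}$ component $1$), and that orientation is recorded by the ordering of $w_{2i-1}$ before $w_{2i}$ within the label coordinates. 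Once this dictionary is pinned down, the argument is a formal transport of Corollary \ref{labeled G2} along a quasi-order-reflecting map.
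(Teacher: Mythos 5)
Your proof is correct and follows the same strategy as the paper's: restrict a contraction of rigidified graphs to its spanning tree to get an $S$-labeled contraction, use Lemma \ref{extra labels} to see that this restriction map reflects the quasi-order (the key point being that a morphism of rigidified graphs is determined by its restriction to the spanning tree), and then invoke Corollary \ref{labeled G2}. Your write-up is in fact more explicit than the paper's one-line argument, spelling out exactly why a bad sequence in $(\cPGgop)_{(G,T,v,\tau)}$ induces a bad sequence in $(\cPT_{\!S}^\op)_{(T,v,\ell)}$.
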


\begin{proof}
Fix a rigidified graph $(G,T,v,\tau)$ of genus $g$, and let $(T,v,\ell)$ be its associated $S$-labeled planar rooted graph.  
We need to prove that the set $(\cPGgop)_{(G,T,v,\tau)}$ admits no bad sequences.
By Lemma \ref{extra labels}, such a bad sequence induces a bad sequence in $(\cPT_{\!S}^\op)_{(T,v,\ell)}$, and Corollary \ref{labeled G2}
says that no such sequences exist.
\end{proof}

We are now ready to prove the main result of this section.

\begin{theorem}\label{planargraphgrob}
For any $g \geq 0$, the category $\cPGgop$ is Gr\"obner.
\end{theorem}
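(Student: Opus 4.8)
The plan is to unwind the definition of a Gröbner category, which requires three things: property (G1), property (G2), and the absence of non-identity endomorphisms. The first two are already in hand: Corollary \ref{G1} gives (G1) for $\cPGgop$ and Corollary \ref{G2} gives (G2). So the only remaining task is to verify that $\cPGgop$ — equivalently $\cPGg$, since the endomorphism monoid of an object is the same in a category and its opposite — has no endomorphisms other than identities. This is, morally, the reason rigidified graphs were introduced in the first place, so I expect it to be a short bookkeeping argument rather than the crux.

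To carry it out I would take an endomorphism $\varphi$ of a rigidified graph $(G,T,v,\tau)$ in $\cPGg$ and show it is the identity by peeling off the structure in two stages. First, by the definition of morphisms in $\cPGg$, $\varphi$ restricts to a contraction $\varphi|_T\colon (T,v)\to(T,v)$ of planar rooted trees; since source and target have equally many edges, $\varphi|_T$ contracts nothing, hence is a bijection on the vertex set of $T$. A contraction of planar rooted trees respects the depth-first linear order on vertices, and an order-preserving bijection of a finite totally ordered set is the identity; as $T$ is a tree, in particular a simple graph, this forces $\varphi|_T=\id_T$, so $\varphi$ fixes every vertex of $G$ and contracts no edge of $T$. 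Second, because a morphism in $\cPGg$ contracts only edges of the spanning tree, $\varphi$ contracts no edge at all, so it descends to an endomorphism $\bar\varphi$ of $G/T$; the compatibility of $\varphi$ with the rigidification says $\bar\varphi\circ\tau=\tau$, and since $\tau\colon R_g\to G/T$ is an isomorphism this gives $\bar\varphi=\id$. Each extra edge of $G$ maps homeomorphically onto an edge of $G/T$, so $\varphi$ fixes each extra edge, and it preserves its orientation because $\bar\varphi$ is the identity and the endpoints are fixed. Thus $\varphi$ fixes every vertex and every edge with its orientation, i.e. $\varphi=\id_G$. Combining this with Corollaries \ref{G1} and \ref{G2} finishes the proof.

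The genuine difficulty of this theorem lives entirely in the inputs already proved, namely Corollaries \ref{G1} and \ref{G2}, which in turn rest on Barter's refinement of Kruskal's tree theorem (Theorem \ref{kruskal}) and its relative form (Corollary \ref{labeled G2}). The only point in the endomorphism argument that needs any care is the assertion that the depth-first order on the spanning tree together with the ordering and orientation of the $g$ extra edges is rigid enough to kill every non-trivial automorphism; the two-stage argument above is precisely the check that it is — pinning down the tree first, then the extra edges via $\tau$.
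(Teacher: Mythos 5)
Your proof is correct and takes the same approach as the paper: combine Corollaries \ref{G1} and \ref{G2} with the observation that rigidified graphs admit no non-identity endomorphisms. The paper simply asserts the latter fact without proof, whereas you spell out the (correct) two-stage check that the planar rooted structure on $T$ rigidifies the spanning tree and $\tau$ then rigidifies the extra edges.
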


\begin{proof}
This follows from Corollaries \ref{G1} and \ref{G2}, along with the fact that rigidified graphs have no nontrivial automorphisms.
\end{proof}

\subsection{The category of graphs of fixed genus is quasi-Gr\"obner}

\begin{lemma}\label{F}
The forgetful functor $\Phi:\cPGgop\to\cGgop$ is essentially surjective and has property (F).
\end{lemma}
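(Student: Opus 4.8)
The plan is to unwind the definitions of essential surjectivity and property (F) for the forgetful functor $\Phi:\cPGgop\to\cGgop$, translating everything back into statements about the original (non-opposite) categories, since that is where the geometry is easiest to see. Recall that $\Phi$ sends a rigidified graph $(G,T,v,\tau)$ to the underlying graph $G$, and sends a contraction of rigidified graphs to the underlying contraction of graphs. Essential surjectivity is immediate: every graph $G$ of genus $g$ admits a spanning tree $T$ (being connected), every tree admits a root and a planar structure, and once these are chosen the quotient $G/T$ is abstractly isomorphic to the rose $R_g$, so we may pick any such isomorphism $\tau$; then $\Phi(G,T,v,\tau)=G$.

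For property (F), I would fix an object of $\cGgop$ — that is, a graph $G$ of genus $g$ — and ask for a finite list of rigidified graphs $(G_i,T_i,v_i,\tau_i)$ together with morphisms $f_i$ in $\cGgop$ from $G$ to $\Phi(G_i,T_i,v_i,\tau_i)=G_i$ (equivalently, contractions $G_i\to G$ in $\cGg$), such that any contraction $H\to G$ factors as $G_i\to G$ precomposed with the image under $\Phi$ of a rigidified contraction. The key observation is that a contraction $\psi:H\to G$ has contractible fibers, so the preimage in $H$ of any spanning tree, root, and planar/orientation data on $G$ is \emph{determined up to the combinatorics already present in $G$}. Concretely, I would take the finite list to consist of all rigidifications $(G,T,v,\tau)$ of $G$ itself (there are finitely many, since $G$ has finitely many spanning trees, finitely many roots, finitely many planar orderings, and $\le 2^g\cdot g!$ choices of $\tau$), each equipped with $f_i$ the identity on $G$. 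Given any contraction $\psi:H\to G$, pull back a chosen rigidification of $G$ along $\psi$: the spanning tree $T\subset G$ pulls back to a subcomplex $\psi^{-1}(T)\subset H$ which, because the contracted edges of $\psi$ form a forest contained in $\psi^{-1}(T)$ — contractions only contract spanning-tree edges once a spanning tree is fixed, and the fibers are contractible — is again a spanning tree of $H$; the root and planar order pull back because $\psi$ restricts to a contraction of rooted planar trees after these choices; and $\tau$ pulls back via the induced isomorphism $H/\psi^{-1}(T)\xrightarrow{\sim}G/T\xrightarrow{\tau}R_g$. This gives a rigidified graph structure on $H$ for which $\psi$ itself is a morphism of rigidified graphs, i.e.\ $\psi=\Phi(\text{rigidified }\psi)\circ \mathrm{id}_G$, which is exactly the factorization property (F) demands.

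The step I expect to require the most care — and the main obstacle — is verifying that the pulled-back data genuinely defines a morphism in $\cPGg$, i.e.\ that $\psi^{-1}(T)$ is contractible and contains all vertices of $H$, that the contracted edges of $\psi$ lie inside it, and that $\psi$ respects the planar order and the orientation/ordering of the extra edges. The vertex condition is clear; contractibility follows from the fact that $\psi$ has contractible fibers and $T$ is contractible, so $\psi^{-1}(T)$ is built from $T$ by gluing on contractible fibers along points, which preserves contractibility (or one can count: $\psi^{-1}(T)$ is connected and spanning, and an Euler-characteristic computation using that $\psi$ preserves genus forces genus $0$). The compatibility with the planar and depth-first orders is where one must invoke the precise definition of a contraction of planar rooted trees from Section~\ref{sec:trees} together with the characterization $\varphi^*(w')=\max\varphi^{-1}(w')$ from Remark~\ref{barter}; one chooses the planar structure on $\psi^{-1}(T)$ to be the unique one refining that of $T$ along $\psi$, and then the relevant order condition holds essentially by construction. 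Finally, once Lemma~\ref{F} is in hand, Theorem~\ref{noetherian} follows by combining it with Theorem~\ref{planargraphgrob} (the category $\cPGgop$ is Gr\"obner) via the definition of quasi-Gr\"obner and Theorem~\ref{repcatnoeth}.
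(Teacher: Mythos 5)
There is a genuine gap, and it stems from a misreading of the quantifiers in property (F). In property (F) for $\Phi:\cPGgop\to\cGgop$, the object $y=(H,T_H,v_H,\tau_H)$ of $\cPGgop$ and the contraction $\varphi:H\to G$ are both \emph{given}, and you must produce a morphism $g:y_i\to y$ in $\cPGgop$, i.e.\ a contraction $\psi:(H,T_H,v_H,\tau_H)\to(G_i,T_i,v_i,\tau_i)$ of rigidified graphs whose source carries the \emph{prescribed} rigidification on $H$. Your argument instead constructs a fresh rigidification on $H$ by pulling back a chosen rigidification of $G$ along $\varphi$; you are not free to do this, since the rigidification on $H$ is part of the data $y$.

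The substantive obstruction this misreading hides is the following. A morphism in $\cPGg$ may only contract edges of the spanning tree, but an arbitrary contraction $\varphi:H\to G$ in $\cGg$ may contract extra edges of $(H,T_H)$, i.e.\ edges outside $T_H$. Your parenthetical claim that ``contractions only contract spanning-tree edges once a spanning tree is fixed'' is false for ordinary contractions; it is a constraint imposed by definition only on morphisms of rigidified graphs. Whenever $\varphi$ contracts an extra edge of $H$, there is no rigidified contraction from $(H,T_H,v_H,\tau_H)$ to any rigidification of $G$ realizing $\varphi$, so taking all $G_i=G$ cannot work. The paper instead lets $G_i$ be larger: given $\varphi$ with contracted-edge set $E\subset\Edge(H)$, it takes $\psi$ to be the rigidified contraction from $(H,T_H,v_H,\tau_H)$ to $(H/(E\cap T_H),\,T_H/(E\cap T_H),\,v_H,\,\tau_H)$, contracting only the tree edges in $E$, and lets $\varphi_i$ be the remaining contraction down to $G$. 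The count $|E|=|H|-|G|$ and $|T_H|=|H|-g$ gives $|E\cap T_H|\geq|H|-|G|-g$, hence $|H/(E\cap T_H)|\leq|G|+g$, and this bound, uniform in $H$, is exactly what makes the required collection $\{(G_i,T_i,v_i,\tau_i),\varphi_i\}$ finite. Your proposal omits both the widening of the targets and the bound that makes the list finite, so it cannot be repaired merely by polishing the planar-structure bookkeeping you flag as the delicate step.
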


\begin{proof}
Essential surjectivity is clear.
For any genus $g$ graph $G$, we need to choose a finite collection of genus $g$ rigidified graphs $(G_i, T_i, v_i, \tau_i)$
along with contractions $\varphi_i:G_i\to G$ such that, for every genus $g$ rigidified graph $(G',T',v',\tau')$ and every contraction
$\varphi:G'\to G$, there exists an index $i$ and a contraction $\psi:(G',T',v',\tau')\to (G_i, T_i, v_i, \tau_i)$ such that $\varphi = \varphi_i\circ\psi$.

For our rigidified graphs $(G_i, T_i, v_i, \tau_i)$ and our contractions $\varphi_i$, we will choose a representative of 
every possible isomorphism class of such structures whose number of edges is at most $|G|+g$.
Since there is a finite number of rigidified graphs with a fixed number of edges and finitely many contractions between any two graphs,
there are only finitely many such choices.

Let $(G',T',v',\tau')$ and $\varphi$ be given, and let $E\subset\Edge(G')$ be the set of edges that are contracted by $\varphi$.
Let $\psi$ be the canonical contraction from $(G',T',v',\tau')$ to $(G'/(E\cap T'), T'/(E\cap T'), v', \tau')$.
It is clear from the definition that $\varphi$ factors through $\psi$.  It thus remains only to show that the number of edges of $G'/(E\cap T')$
is at most $|G|+g$.  Indeed, 
we have $|E| = |G'| - |G|$ and $|T'| = |G'| - g$, thus $|E\cap T'| \geq |G'| - (|G|+g)$.
From this it follows that $|G'/(E\cap T')| = |G'| - |E\cap T'| \leq |G| + g$.
\end{proof}

\excise{
\begin{lemma}\label{F}
Let $\Phi:\cPGgop\to\cRGgop$ and $\Psi:\cRGgop\to\cGgop$ be the forgetful functors.
Then $\Phi$, $\Psi$, and $\Psi\circ\Phi$ are all essentially surjective and all have property (F).
\end{lemma}

\begin{proof}
Essential surjectivity of all three functors is clear.  If we prove property (F) for $\Phi$ and $\Psi$, this will prove property (F)
for $\Psi\circ\Phi$ \cite[Proposition 3.2.6]{sam}.  We begin with property (F) for $\Phi$, which is the more difficult statement.

For any genus $g$ rooted graph $(G,v)$, we need to choose a finite collection of genus $g$ rigidified graphs $(G_i, T_i, v_i, \tau_i)$
along with contractions $\varphi_i:(G_i,v_i)\to (G,v)$ such that, for every genus $g$ rigidified graph $(G',T',v',\tau')$ and every contraction
$\varphi:(G',v')\to (G,v)$, there exists an index $i$ and a contraction 
$\psi:(G',T',v',\tau')\to (G_i, T_i, v_i, \tau_i)$ such that $\varphi = \varphi_i\circ\psi$.

For our rigidified graphs $(G_i, T_i, v_i, \tau_i)$ and our contractions $\varphi_i$, we will choose a representative of 
every possible isomorphism class of such structures whose number of edges is at most $|G|+g$.
Since there is a finite number of rigidified graphs with a fixed number of edges and finitely many contractions between any two graphs,
there are only finitely many such choices.

Let $(G',T',v',\tau')$ and $\varphi$ be given, and let $E\subset\Edge(G')$ be the set of edges that are contracted by $\varphi$.
Let $\psi$ be the canonical contraction from $(G',T',v',\tau')$ to $(G'/(E\cap T'), T'/(E\cap T'), v', \tau')$.
It is clear from the definition that $\varphi$ factors through $\psi$.  It thus remains only to show that the number of edges of $G'/(E\cap T')$
is at most $|G|+g$.  Indeed, 
we have $|E| = |G'| - |G|$ and $|T'| = |G'| - g$, thus $|E\cap T'| \geq |G'| - (|G|+g)$.
From this it follows that $|G'/(E\cap T')| = |G'| - |E\cap T'| \leq |G| + g$.

Property (F) for $\Psi$ is much easier:  for each genus $g$ graph $G$, we consider the rooted graphs $(G,v)$ for every vertex $v$ of $G$.
It is clear that every contraction from a rooted graph to $G$ is in fact a rooted contraction to some $(G,v)$.
\end{proof}
}

\begin{proof}[Proof of Theorem \ref{noetherian}.]
By Theorem \ref{planargraphgrob} and Lemma \ref{F}, $\cGgop$ is quasi-Gr\"obner.
Theorem \ref{noetherian} then follows from Theorem \ref{repcatnoeth}.
\end{proof}

\section{Smallness and growth}\label{sec:small}
We define what it means for a module over $\cGgop$ to be generated in low degree, and see what this tells us about its dimension growth.

\subsection{Generation degree, smallness, and smallishness}
Fix a Noetherian commutative ring $k$.
For any genus $g$ graph $G$, let $P_G\in \Rep_k(\cGgop)$ be the {\bf principal projective} module that assigns to a genus $g$ graph $G'$
the free $k$-module with basis $\Mor_{\cGg}(G',G)$.
Note that a module $M$ is finitely generated if and only if it is isomorphic to a quotient of a finite sum of principal projectives.
We say that a module $M\in\Rep_k(\cGgop)$ is
{\bf finitely generated in degree \boldmath{$\leq d$}} if we only need to use principal projectives 
corresponding to graphs with $d$ or fewer edges.
The following lemma illustrates this notion in a specific example.

\begin{lemma}\label{E}
Let $E$ be the $\cGopg$-module that takes a graph $G$ to the free $k$-module with basis indexed by edges of $G$, with the obvious maps.
The $\cGopg$-module $E^{\otimes i}$ is generated in degrees $\leq g+i$.
\end{lemma}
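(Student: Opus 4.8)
The plan is to produce the generators explicitly as edge-tuples living inside small graphs, and then to exhibit every edge-tuple of an arbitrary genus $g$ graph as the pullback of one of them along a single contraction. Recall that $E^{\otimes i}(G)$ is the free $k$-module on the set of $i$-tuples $(e_1,\dots,e_i)$ of edges of $G$, and that a contraction $\psi\colon G\to G'$ induces the ``obvious'' map $E^{\otimes i}(\psi)\colon E^{\otimes i}(G')\to E^{\otimes i}(G)$ sending a tuple of edges of $G'$ to the tuple of their unique non-contracted preimages in $G$. For the generating set I would take $x_1,\dots,x_r$ to be representatives of the finitely many isomorphism classes of genus $g$ graphs with at most $g+i$ edges, and let the elements $v_j$ run over all basis elements of all the modules $E^{\otimes i}(x_j)$; this is a finite list, and each $x_j$ has at most $g+i$ edges.

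The heart of the argument is then the following. Given a genus $g$ graph $G$ and a basis element $(e_1,\dots,e_i)$ of $E^{\otimes i}(G)$, set $S=\{e_1,\dots,e_i\}$ and choose $F\subseteq\Edge(G)\smallsetminus S$ to be a maximal acyclic set of edges (a maximal forest avoiding $S$). Contracting $F$ gives a morphism $\psi\colon G\to G/F$ in $\cGopg$: its fibers are subtrees, hence contractible, and it preserves genus, so it is a contraction in the sense of Section \ref{sec:main defs}. Since $\psi$ contracts no edge of $S$, the edges $e_1,\dots,e_i$ have well-defined images $\bar e_1,\dots,\bar e_i$ in $G/F$, and by the description of $E^{\otimes i}(\psi)$ above, this map carries $(\bar e_1,\dots,\bar e_i)$ to $(e_1,\dots,e_i)$. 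It therefore remains only to bound $|G/F|$: by maximality of $F$, every edge in $\Edge(G)\smallsetminus S\smallsetminus F$ closes a cycle together with edges of $F$ and hence becomes a loop in $G/F$, so every non-loop edge of $G/F$ is the image of an edge of $S$, of which there are at most $|S|\leq i$. A spanning tree of $G/F$ uses no loops, hence has at most $|S|\leq i$ edges, hence $G/F$ has at most $i+1$ vertices and so $|G/F| = g + |\Vert(G/F)| - 1 \leq g+i$. Thus $G/F$ is isomorphic to some $x_j$, and $(\bar e_1,\dots,\bar e_i)$ is carried by that isomorphism to one of the $v_j$, completing the proof that $E^{\otimes i}$ is finitely generated in degree $\leq g+i$.

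I do not expect a serious obstacle here. The only real content is the elementary observation in the last step — that after contracting a maximal forest disjoint from a set $S$ of edges, all but at most $|S|$ edges become loops — and the only points requiring care are keeping straight the direction of the induced maps on $E^{\otimes i}$ (it is contravariant in $G$) and the bookkeeping in the definition of ``finitely generated in degree $\leq d$'', in particular that one is free to over-generate by throwing in every edge-tuple of every genus $g$ graph with at most $g+i$ edges.
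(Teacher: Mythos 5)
Your proof is correct and takes essentially the same approach as the paper: contract non-distinguished, non-loop edges until the graph has at most $g+i$ edges. The paper iterates single-edge contractions (using that a genus $g$ graph has at most $g$ loops, so more than $g+i$ edges forces a contractible non-distinguished edge), while you do it in one step by contracting a maximal forest disjoint from the distinguished edges — a cosmetic repackaging of the same idea.
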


\begin{proof}
For any graph $G$ of genus $g$, $E^{\otimes i}(G)$ has a basis given by an ordered $i$-tuple of edges, and any such basis
element is in the image of the map induced by a contraction $\varphi:G\to G'$ if and only if none of the distinguished
edges are contracted by $\varphi$.  If $G$ has more than $g+i$ edges, then it has more than $i$ edges that are not loops,
therefore for any given $i$-tuple, one can find a non-distinguished edge to contract.
\end{proof}

We say that a module $M$ is {\bf \boldmath{$d$}-small} if it is a subquotient of a module that is generated in degrees $\leq d$.
We say that $M$ is {\bf \boldmath{$d$}-smallish} if it admits a filtration whose associated graded is $d$-small.

\begin{proposition}\label{smallish fg}
If $M$ is $d$-smallish for some $d$, then $M$ is finitely generated.
\end{proposition}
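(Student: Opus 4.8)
The plan is to reduce the statement to the case of $d$-small modules, which are finitely generated by Theorem \ref{noetherian}, and then handle the filtration by induction on its length. First I would recall that a $d$-small module is a subquotient of a module generated in degrees $\leq d$; such a module is a quotient of a finite sum of principal projectives $P_{G_1}\oplus\cdots\oplus P_{G_r}$ with $|G_j|\leq d$, hence finitely generated, and by local Noetherianity (Theorem \ref{noetherian}) every submodule of it is finitely generated, so every quotient of every submodule — i.e.\ every subquotient — is finitely generated as well. Thus $d$-small implies finitely generated.

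Next I would turn to the filtration. By definition, if $M$ is $d$-smallish then it admits a filtration $0 = M_0 \subseteq M_1 \subseteq \cdots \subseteq M_m = M$ whose associated graded pieces $M_i/M_{i-1}$ are each $d$-small, hence finitely generated by the previous paragraph. I would argue by induction on $m$ that $M$ is finitely generated. The base case $m=0$ is trivial ($M=0$). For the inductive step, $M_{m-1}$ carries the induced filtration of length $m-1$ with $d$-small quotients, so by induction $M_{m-1}$ is finitely generated; and $M/M_{m-1} = M_m/M_{m-1}$ is $d$-small, hence finitely generated. Now I would invoke the general fact that in an abelian category, an extension of a finitely generated object by a finitely generated object is finitely generated: choosing finitely many generators of $M_{m-1}$, together with lifts to $M$ of finitely many generators of $M/M_{m-1}$, one checks that these elements generate $M$ — given $x \in M(G)$, its image in $(M/M_{m-1})(G)$ is a $k$-combination of images of the lifted generators under transition maps, so subtracting the corresponding combination of the lifts yields an element of $M_{m-1}(G)$, which is then a combination of the generators of $M_{m-1}$.

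I do not anticipate a serious obstacle here; the argument is formal once Theorem \ref{noetherian} is in hand. The only point requiring a little care is the extension claim for finitely generated modules over a category, which is standard but worth stating explicitly: one must verify that ``finitely generated'' in the sense defined in Section \ref{intro-growth} (spanning $M(G)$ by images of finitely many distinguished elements under all morphisms) behaves well under short exact sequences, which follows directly from the description of finite generation in terms of quotients of finite sums of principal projectives $P_G$ given just before the statement.
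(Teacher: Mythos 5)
Your proof is correct and takes essentially the same approach as the paper: reduce to the $d$-small case via Theorem \ref{noetherian}, then pass through the filtration. The only difference is packaging: where the paper lifts generators of $\gr M$ all at once and invokes the informal slogan ``surjectivity is an open condition,'' you unroll this into an explicit induction on the length of the filtration using the standard fact that an extension of finitely generated modules is finitely generated. That is a legitimate (and arguably clearer) way of making the paper's lifting argument precise, not a genuinely different route.
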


\begin{proof}
Choose a filtration of $M$ such that the associated graded $\gr M$ is $d$-small.
Theorem \ref{noetherian} implies that $\gr M$ is finitely generated.  This means that there is a finite collection
$G_1,\ldots,G_r$ of genus $g$ graphs, along with elements $v_i\in \gr M(G_i)$,
such that, for any genus $g$ graph $G$, the natural map
$$\bigoplus_{i=1}^r \bigoplus_{\varphi:G\to G_i} k\cdot e_{i,\varphi} \to \gr M(G)$$
taking $e_{i,\varphi}$ to $\varphi^* v_i$
is surjective.  For each $i$, choose an arbitrary lift $\tilde v_i\in M(G_i)$ of $v_i$.
Since surjectivity is an open condition, the nautral map
$$\bigoplus_{i=1}^r \bigoplus_{\varphi:G\to G_i} k\cdot e_{i,\varphi} \to M(G)$$
taking $e_{i,\varphi}$ to $\varphi^* \tilde v_i$
is also surjective, which means that $M$ is finitely generated.
\end{proof}

\begin{proposition}\label{bounded growth}
Let $k$ be a field, and suppose that $M\in\Rep_k(\cGgop)$ is $d$-smallish.
Then there exists a polynomial $f_M(t)\in\Z[t]$ of degree at most $d$ such that, for all $G$,
$\dim_k M(G) \leq f_M(|G|)$.
\end{proposition}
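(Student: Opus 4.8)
\noindent\emph{Proof sketch (proposed).} The plan is to reduce the statement, via two elementary reductions, to a combinatorial estimate on the number of contractions between two fixed graphs. First, since $\dim_k$ is additive along short exact sequences of $k$-modules, a filtration of $M$ whose associated graded $\gr M$ is $d$-small gives $\dim_k M(G)=\dim_k(\gr M)(G)$ for every $G$; as a finite sum of polynomials of degree $\le d$ is again such a polynomial, it suffices to treat the case in which $M$ is itself $d$-small. Second, if $M$ is $d$-small then it is a subquotient of a module that is finitely generated in degrees $\le d$, hence a subquotient of $P:=\bigoplus_{i=1}^{r}P_{G_i}$ for suitable genus $g$ graphs $G_1,\dots,G_r$ with $|G_i|\le d$; since evaluating at $G$ and passing to a sub-$k$-module or a quotient $k$-module cannot increase dimension, $\dim_k M(G)\le \dim_k P(G)=\sum_{i=1}^{r}|\Mor_{\cGg}(G,G_i)|$. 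Everything thus comes down to bounding, for a fixed genus $g$ graph $H$ with $e:=|H|\le d$, the number $|\Mor_{\cGg}(G,H)|$ by a polynomial in $|G|$ of degree $e$ with non-negative integer coefficients.

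For this I would use that a contraction $\varphi\colon G\to H$ is determined by its set $F_\varphi\subseteq\Edge(G)$ of contracted edges together with the isomorphism of graphs $\bar\varphi$ it induces on the quotient. Concretely: the fibers of $\varphi$ are contractible, hence trees, so $F_\varphi$ is a subforest of $G$ and $G/F_\varphi$ has genus $g$; the contraction $\varphi$ factors through the canonical contraction $G\to G/F_\varphi$ (exactly as in the proof of Lemma \ref{F}), and the induced smooshing $\bar\varphi\colon G/F_\varphi\to H$ has no contracted edges, so it is an isomorphism (a smooshing with no contracted edges is a bijection on edges, and, since its fibers are connected, also on vertices). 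Recalling from the proof of Lemma \ref{F} that $|F_\varphi|=|G|-|H|=|G|-e$, there are at most $\binom{|G|}{|G|-e}=\binom{|G|}{e}$ choices for $F_\varphi$, and for each there are at most $|\Aut(H)|$ isomorphisms $G/F_\varphi\to H$. Hence $|\Mor_{\cGg}(G,H)|\le |\Aut(H)|\binom{|G|}{e}\le |\Aut(H)|\,(|G|+1)^{e}$, using the elementary bound $\binom{n}{e}\le (n+1)^{e}$ for $n\ge 0$.

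Assembling these estimates, the polynomial $f_M(t):=\sum_{i=1}^{r}|\Aut(G_i)|\,(t+1)^{|G_i|}\in\Z[t]$ has degree $\max_i|G_i|\le d$ and satisfies $\dim_k M(G)\le f_M(|G|)$ for all $G$. I do not anticipate a serious obstacle. The one point needing care is the description of a contraction in terms of its contracted-edge set and an isomorphism of the quotient --- in particular the facts that $|F_\varphi|=|G|-|H|$ and that a contraction with no contracted edges is an isomorphism --- but both are immediate from the definitions once one notes that a smooshing induces a bijection between the non-contracted edges of its source and the edges of its target; the remaining ingredients (additivity of dimension along a filtration, monotonicity under subquotients, the binomial inequality) are routine.
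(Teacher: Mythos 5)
Your proof is correct and follows essentially the same route as the paper's: reduce first from $d$-smallish to $d$-small via additivity of dimension over a filtration, then to a finite sum of principal projectives $P_{G_i}$ with $|G_i|\le d$ via subquotient monotonicity, and finally bound $|\Mor_{\cGg}(G,H)|$ by $|\Aut(H)|\binom{|G|}{|H|}$ by observing that a contraction is determined by its set of contracted edges together with an isomorphism of the quotient onto $H$. The paper states all of this very tersely ("we may immediately reduce to the case where $M$ is the principal projective..."), so the only thing you add is the unpacking of those reductions and the clean upgrade $\binom{n}{e}\le (n+1)^e$ to make $f_M$ literally an element of $\Z[t]$, which the paper glosses over.
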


\begin{proof}
We may immediately reduce to the case where $M$ is the principal projective $P_{G'}$ for some genus $g$ graph $G'$ with $d$ edges.
For any $G$, a contraction from $G$ to $G'$ is determined, up to automorphisms of $G'$,
by a choice of $|G|-d$ edges of $G$ to contract.  The number of such choices is $\binom{|G|}{d}$, so
$\dim_k P_{G'}(G) \leq |\operatorname{Aut}(G')|\binom{|G|}{d}$.
\end{proof}

\subsection{Subdivision}\label{sec:subdivision}
Fix a graph $G$ of genus $g$, a natural number $r$, and an ordered $r$-tuple $\ue = (e_1,\ldots,e_r)$ of distinct directed
non-loop edges of $G$.  For any ordered $r$-tuple $\um = (m_1,\ldots,m_r)$ of natural numbers, let $G(\ue,\um)$ be 
the tree obtained from $G$ by subdividing each edge $e_i$ into $m_i$ edges.  
The number $m_i$ is allowed to be zero, and we adopt the convention
that subdividing $e_i$ into 0 edges means contracting $e_i$.  
For each $i$, the graph $G(\ue,\um)$ has a directed path of length $m_i$ where the directed edge $e_i$ used to be, 
and we label the vertices of that path $v_i^0,\ldots,v_i^{m_i}$.

Let $\OI$ be the category whose objects are linearly ordered finite sets and whose morphisms are ordered inclusions.
Every object of $\OI$ is isomorphic via a unique isomorphism to the finite set $[m]$ for some $m\in\N$.
For any $\um\in\N^r$, let $[\um]$ denote the corresponding object of the product category $\OIr$.

Our goal in this section is to define a {\bf subdivision functor} $\Phi_{G,\ue}:\OIr\to\cGgop$ and prove that $\Phi_{G,\ue}$ 
has property (F).  
We define our functor on objects by putting $\Phi_{G,\ue}([\um]) := G(\ue,\um)$.
Let $\uf = (f_1,\ldots,f_r)$ be a morphism in $\OIr$ from $[\um]$ to $[\un]$.  
We define the corresponding contraction $$\Phi_{G,\ue}(\uf):G(\ue,\un)\to G(\ue,\um)$$
by sending $v_i^t$ to $v_i^s$, where $s$ is the maximal element of the set $\{0\}\cup \{j\mid f_i(j)\leq t\} \subset \{0,1,\ldots,m_i\}$.

For any $\un \in\N^r$, let $|\un| := \sum n_i$.
We say that a contraction $\varphi:G(\ue,\un)\to G'$ 
{\bf factors nontrivially} if there exists a non-identity morphism $\uf:[\um]\to[\un]$ in $\OIr$
and a contraction $\psi:G(\ue,\um)\to G'$ such that $\varphi = \psi\circ\Phi_{G,\ue}(\uf)$.

\begin{proposition}\label{OIrF}
The subdivision functor $\Phi_{G,\ue}:\OIr\to\cGgop$ has property (F).
\end{proposition}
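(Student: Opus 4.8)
The goal is to verify property (F) for the subdivision functor $\Phi_{G,\ue}:\OIr\to\cGgop$: given a genus $g$ graph $G'$, I need finitely many objects $[\un_1],\ldots,[\un_s]$ of $\OIr$ and contractions $\varphi_j:G' \to G(\ue,\un_j)$ (morphisms in $\cGgop$, hence contractions $G(\ue,\un_j)\to G'$ in $\cGg$) such that every contraction $\varphi:G(\ue,\un)\to G'$ factors as $\varphi = \varphi_j \circ \Phi_{G,\ue}(\uf)$ for some $j$ and some $\uf:[\un_j]\to[\un]$ in $\OIr$. The strategy is exactly the one used for Lemma \ref{F}: bound the size of a minimal representative and then take all isomorphism classes below that bound. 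Concretely, I would first show that any contraction $\varphi:G(\ue,\un)\to G'$ ``factors nontrivially'' (in the sense defined just before the proposition) whenever $|\un|$ is large enough compared to $|G'|$, so that by iterating we can reduce to the case $|\un| \le |G|+|G'|$ or some similarly explicit bound.

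\textbf{Key steps.} First I would fix $\varphi:G(\ue,\un)\to G'$ and examine, for each $i$, the behavior of $\varphi$ on the subdivided path with vertices $v_i^0,\ldots,v_i^{n_i}$ and its $n_i$ edges. Each such edge is either contracted by $\varphi$ or maps to an edge of $G'$. If two \emph{consecutive} edges of the $i$th path, say the edges from $v_i^{t-1}$ to $v_i^t$ and from $v_i^t$ to $v_i^{t+1}$, are both contracted by $\varphi$, then $\varphi$ factors nontrivially: we may first contract one of them via $\Phi_{G,\ue}(\uf)$ for the obvious non-identity $\uf:[\um]\to[\un]$ with $m_i = n_i-1$ (and $m_{i'}=n_{i'}$ otherwise), and then apply an induced contraction $\psi:G(\ue,\um)\to G'$. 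More generally, the non-contracted edges of the $i$th path, together with the requirement that contracted edges come in ``runs,'' means that the number of edges of the $i$th path that $\varphi$ does \emph{not} contract is at most the number of edges of $G'$ in the image of that path, which is at most $|G'|$; and after we have pushed all the slack out, the total number of ``run boundaries'' is also bounded. So I would argue that if $|\un|$ exceeds $2|G'| + |G|$ (the $|G|$ accounting for edges of $G$ not among the $e_i$, the $2|G'|$ generously bounding image edges and their run structure across all $r$ paths), then some $i$ has two consecutive contracted edges on its path and $\varphi$ factors nontrivially. Iterating, every $\varphi$ factors through some $\varphi_j:G(\ue,\un_j)\to G'$ with $|\un_j| \le 2|G'|+|G|$. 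Finally, since for fixed $\un_j$ there are finitely many contractions $G(\ue,\un_j)\to G'$, and finitely many tuples $\un_j$ with $|\un_j| \le 2|G'|+|G|$, taking all of them as our finite collection yields property (F).

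\textbf{Main obstacle.} The delicate point is the bookkeeping in the factoring-nontrivially step: I need to be careful that the ``first vertex in a preimage'' convention in the definition of $\Phi_{G,\ue}(\uf)$ — namely $v_i^t \mapsto v_i^s$ with $s = \max(\{0\}\cup\{j : f_i(j)\le t\})$ — is compatible with the way a contraction collapses a run of consecutive edges, so that the induced $\psi:G(\ue,\um)\to G'$ really is a well-defined contraction with $\varphi = \psi\circ\Phi_{G,\ue}(\uf)$. In other words, I must check that collapsing one edge of a run on the $G(\ue,\un)$ side corresponds exactly to an $\OIr$-morphism and that the residual map on $G(\ue,\um)$ is still a graph morphism of the right kind (connected contractible fibers, genus preserved — automatic since $G(\ue,\um)$ still has genus $g$). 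This is routine but is where all the definitions from Section \ref{sec:subdivision} must be used correctly; once it is in place, the counting argument and the appeal to finiteness are straightforward and essentially identical in spirit to the proof of Lemma \ref{F}.
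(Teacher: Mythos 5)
Your proof is correct in outline, but it takes a more circuitous route than needed, and the detour stems from a misconception about when a contraction factors nontrivially.

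You insist on finding \emph{two consecutive} contracted edges on some subdivided path before you are willing to factor through $\Phi_{G,\ue}$. This is an unnecessarily strong requirement: a \emph{single} contracted edge on a subdivided path already suffices. Indeed, if $\varphi:G(\ue,\un)\to G'$ contracts the edge between $v_i^{t-1}$ and $v_i^t$, take $\uf$ with $m_i = n_i-1$, $m_{i'}=n_{i'}$ for $i'\ne i$, and $f_i$ the order-embedding $[n_i-1]\hookrightarrow [n_i]$ that skips $t$; then $\Phi_{G,\ue}(\uf)$ contracts exactly that one edge, its (unique nontrivial) fiber is contained in a fiber of $\varphi$, and the induced $\psi$ is automatically a smooshing between two genus-$g$ graphs, hence a contraction. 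The ``main obstacle'' you flag at the end is thus a non-issue: the convention $s=\max(\{0\}\cup\{j:f_i(j)\le t\})$ is designed precisely so that omitting one element from the image of $f_i$ contracts the corresponding single edge of the path. With this stronger observation the counting becomes one line, which is what the paper does: $\varphi$ must contract $|G|+|\um|-r-|G'|$ edges, of which only $|G|-r$ can be non-subdivided, so as soon as $|\um|>|G'|$ some subdivided edge is contracted and $\varphi$ factors nontrivially.

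Your counting also has a small slip. Under your ``no two consecutive contracted edges'' hypothesis, the bound one actually extracts is $|\un|\le 2|G'|+r$, not $2|G'|+|G|$: the $r$ accounts for the at-most-one contracted edge that can trail each path, while the $|G|$ you add (``accounting for edges of $G$ not among the $e_i$'') plays no role, since those edges are not on the subdivided paths and do not constrain $|\un|$. Because $r\le |G|$ this is merely a looser bound, so the proof still closes, but the justification you give for the extra term is not right. In any case, both your bound and the paper's are finite, and the final step — there are finitely many $\um$ below the bound and finitely many morphisms to $G'$ from each $G(\ue,\um)$ — is the same in both arguments.
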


\begin{proof}
Property (F) says exactly that, for any graph $G'$ of genus $g$, the set of contractions 
from some $G(\ue,\um)$ to $G'$ that do not factor nontrivially is finite.
Let $\varphi:G(\ue,\um)\to G'$ be given.
We have $$|G(\ue,\um)| = |G| + |\um| - r,$$ so $\varphi$ must contract $|G| + |\um| - r - |G'|$ edges.
If $|\um|$ is sufficiently large, then at least one of those edges must be one of the subdivided edges.
We may then factor $\varphi$ nontrivially by first contracting that edge.

This tells us that, if we are looking for contractions from some $G(\ue,\um)$ to $G'$ that do not factor nontrivially,
we only need to consider finitely many $r$-tuples $\um$.  The proposition then
follows from the fact that all Hom sets in $\cGgop$ are finite.
\end{proof}

Proposition \ref{bounded growth} implies that the dimension of $M(G(\ue,\um))$ is bounded by a polynomial in $\um$
of degree at most $d$.  The following corollary to Proposition \ref{OIrF} says that the dimension of $M(G(\ue,\um))$ is in fact equal
to a polynomial in $\um$ when each coordinate is sufficiently large.

\begin{corollary}\label{actual polynomial-sub}
Let $k$ be a field, and suppose that $M\in\Rep_k(\cGgop)$ is $d$-smallish.  Then there exists a multivariate polynomial $f_{M,G,\ue}(t_1,\ldots,t_r)$
of total degree at most $d$ such that, if $\um$ is sufficiently large in every coordinate, 
$$\dim_k M(G(\ue,\um)) = f_{M,G,\ue}(m_1,\ldots,m_r).$$
\end{corollary}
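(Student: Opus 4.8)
The plan is to deduce Corollary \ref{actual polynomial-sub} from Proposition \ref{OIrF} by invoking the standard structure theory of finitely generated $\OI$-modules (and more generally $\OIr$-modules), which guarantees that the Hilbert function of such a module agrees with a polynomial once the arguments are large. Concretely, I would proceed as follows.

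First, I would reduce to a pullback statement. The subdivision functor $\Phi_{G,\ue}:\OIr\to\cGgop$ has property (F) by Proposition \ref{OIrF}, so Theorem \ref{fg} tells us that $\Phi_{G,\ue}^*M$ is a finitely generated $\OIr$-module whenever $M$ is a finitely generated $\cGgop$-module. Since $M$ is $d$-smallish, Proposition \ref{smallish fg} gives that $M$ is finitely generated, hence so is $\Phi_{G,\ue}^*M$; by definition $(\Phi_{G,\ue}^*M)([\um]) = M(G(\ue,\um))$, so it suffices to show that the function $\um\mapsto\dim_k N([\um])$ is eventually polynomial of total degree at most $d$ for the $\OIr$-module $N := \Phi_{G,\ue}^*M$, with the degree bound coming from the smallishness of $M$.

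For the polynomiality, I would cite the known Hilbert-series theory for $\OI$-modules (the ordered analogue of the $\FI$-module theory of \cite{CEF}), which says that a finitely generated $\OI$-module over a field has eventually polynomial dimension; for the product category $\OIr$ one gets eventual agreement with a polynomial in $r$ variables of total degree controlled by the generation degrees. To get the sharp bound $d$ rather than merely "some polynomial," I would handle smallness and the filtration separately: if $M$ is $d$-small, it is a subquotient of a module generated in degrees $\leq d$, and pullback is exact, so $N$ is a subquotient of the pullback of a module generated in degrees $\leq d$; tracking how a contraction $G(\ue,\um)\to G'$ with $|G'|\leq d$ behaves under subdivision shows the relevant $\OIr$-module is generated in multidegrees of total size bounded in terms of $d$, giving total degree $\leq d$ for its Hilbert polynomial. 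For $d$-smallish $M$, one runs this over each graded piece of the filtration and sums the resulting polynomials, noting that a finite sum of polynomials of total degree $\leq d$ again has total degree $\leq d$; exactness of pullback ensures the dimensions add.

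The main obstacle I expect is pinning down the precise degree bound: Proposition \ref{bounded growth} already gives an upper bound of the right shape, but upgrading "bounded by a polynomial of degree $d$" to "equal to a polynomial of total degree $\leq d$ for $\um$ large" requires the genuine structure theorem for $\OIr$-modules and a careful bookkeeping argument relating the generation degree of $M$ as a $\cGgop$-module to the generation multidegree of $\Phi_{G,\ue}^*M$ as an $\OIr$-module. In particular one must check that contracting a subdivided edge corresponds exactly to an $\OIr$-morphism (which is how $\Phi_{G,\ue}$ was defined), so that the principal projectives pulled back from degree-$d$ graphs decompose as finite sums of $\OIr$-principal-projectives of the expected multidegrees; this compatibility is essentially built into the definition of the subdivision functor, so the argument should go through, but it is the step that needs genuine care rather than routine verification.
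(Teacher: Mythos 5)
Your first two steps coincide with the paper's: pull back along $\Phi_{G,\ue}$, apply Proposition \ref{smallish fg}, Theorem \ref{fg}, and Proposition \ref{OIrF} to get a finitely generated $\OIr$-module, and invoke the Sam--Snowden structure theory for $\OI$/$\OIr$-modules to get eventual polynomiality. Where you diverge is in extracting the degree bound. The paper simply applies Proposition \ref{bounded growth}: since $\dim_k M(G(\ue,\um))$ is bounded above by a polynomial of degree $d$ in $|G(\ue,\um)| = |G|-r+|\um|$, and the dimension function is eventually a polynomial in $\um$, that polynomial is forced to have total degree at most $d$. This sidesteps all bookkeeping about generation multidegrees in $\OIr$. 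You instead try to propagate the generation-degree bound through the pullback: show $\Phi_{G,\ue}^*P_{G'}$ for $|G'|\leq d$ is generated in multidegrees of total size $\leq d$ (which does hold, because a contraction $G(\ue,\um)\to G'$ contracts $|G|+|\um|-r-|G'|$ edges, and once $|\um|>|G'|$ one of them must lie on a subdivided path), and then pass through subquotients using exactness of pullback and the fact that Hilbert-polynomial degree is monotone under taking subquotients. This is a correct alternate route, and in fact more self-contained in that it does not rely on Proposition \ref{bounded growth}, but it requires carrying out exactly the bookkeeping you flag as the delicate step, whereas the paper's argument avoids it entirely. As written, your proposal sketches rather than proves the multidegree bound; you should either carry out that computation or simply appeal to Proposition \ref{bounded growth} as the paper does.
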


\begin{proof}
Proposition \ref{smallish fg} tells us that $M$ is finitely generated,
though we have no control over the degree of generation.
Theorem \ref{fg} and Proposition \ref{OIrF} combine to tell us that $\Phi^*_{G,\ue}M$ is a finitely generated 
$\OIr$-module.  By \cite[Theorem 6.3.2, Proposition 6.3.3, and Theorem 7.1.2]{sam}, 
this implies that there exists a multivariate polynomial $f_{M,G,\ue}(t_1,\ldots,t_r)$
such that, if $\um\in\N^r$ is sufficiently large in every coordinate, 
$$\dim_k M(G(\ue,\um)) = \dim_k \Phi^*_{G,\ue}M([\um]) = f_{M,G,\ue}(m_1,\ldots,m_r).$$
Proposition \ref{bounded growth} says that $\dim_k M(G(\ue,\um))$ is bounded above by a polynomial of degree $d$ in 
the quantity $|G(\ue,\um)| = |G|-r+|\um|$, thus the total degree of $f_{M,G,\ue}(t_1,\ldots,t_r)$ can be at most $d$.
\end{proof}

\subsection{Sprouting}

Fix a graph $G$ of genus $g$, a natural number $r$, and an ordered $r$-tuple $\uv := (v_1,\ldots,v_r)$ of distinct vertices of $T$.
For any ordered $r$-tuple $\um = (m_1,\ldots,m_r)$ of natural numbers, let $G(\uv,\um)$ be 
the tree obtained from $G$ by attaching $m_i$ new edges to the vertex $v_i$, each of which has a new leaf as its other endpoint.
We will label the new leaves connected to the vertex $v_i$ by the symbols $v_i^1,\ldots,v_i^{m_i}$.

Our goal in this section is to define a {\bf sprouting functor} $\Psi_{G,\uv}:\OIr\to\cGgop$ and prove that $\Psi_{G,\uv}$ 
has property (F).  We define our functor on objects by putting $\Psi_{G,\ue}([\um]) := G(\uv,\um)$.
Let $\uf = (f_1,\ldots,f_r)$ be a morphism in $\OIr$ from $[\um]$ to $[\un]$.  
We define the corresponding contraction $$\Psi_{G,\uv}(\uf):T(\uv,\un)\to T(\uv,\um)$$
by fixing all of the vertices of $T$, sending $v_i^t$ to $v_i^s$ if $f_i(s) = t$, and sending $v_i^t$ to $v_i$ of $t$ is not in the image
of $f_i$.

As in Section \ref{sec:subdivision},
we say that a contraction $\varphi:G(\uv,\un)\to G'$ 
{\bf factors nontrivially} if there exists a non-identity morphism $\uf:[\um]\to[\un]$ in $\OIr$
and a contraction $\psi:G(\uv,\um)\to G'$ such that $\varphi = \psi\circ\Psi_{T,\uv}(\uf)$.

\begin{proposition}\label{OIrF2}
The sprouting functor $\Phi_{G,\uv}:\OIr \rightarrow \cGgop$ has property (F).
\end{proposition}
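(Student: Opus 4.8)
The plan is to follow the proof of Proposition~\ref{OIrF} essentially word for word, replacing the count of subdivided edges by the count of sprouted leaf edges. By definition, property~(F) for $\Psi_{G,\uv}$ is equivalent to the assertion that, for every genus~$g$ graph $G'$, there are only finitely many contractions $\varphi:G(\uv,\un)\to G'$, ranging over all $\un\in\N^r$, that do not factor nontrivially; granting this, one obtains property~(F) by taking as the required finite collection one representative of each such pair $(\un,\varphi)$, exactly as in the subdivision case.

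First I would record the arithmetic. The graph $G(\uv,\un)$ has $|G|+|\un|$ edges: the $|\un|$ sprouted leaf edges joining $v_i$ to $v_i^t$, together with the $|G|$ edges of $G$ itself. Since the non-contracted edges of the source of any smooshing map bijectively onto the edges of the target (each edge of the target has connected, hence one-point, fibers over its interior points), a contraction $\varphi:G(\uv,\un)\to G'$ collapses exactly $|G|+|\un|-|G'|$ edges. Hence, as soon as $|\un|>|G'|$, the number of collapsed edges exceeds $|G|$, so $\varphi$ must collapse at least one sprouted leaf edge, say the one from $v_i$ to $v_i^t$.

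Next I would check that collapsing such an edge is realized by a non-identity morphism of $\OIr$. Let $\um$ be obtained from $\un$ by decreasing the $i$th coordinate by one, and let $\uf:[\um]\to[\un]$ be the identity in every slot except the $i$th, where $f_i$ is the order-embedding whose image omits $t$. Then $\Psi_{G,\uv}(\uf):G(\uv,\un)\to G(\uv,\um)$ collapses precisely the edge from $v_i$ to $v_i^t$ and relabels the remaining leaves, so $\varphi$ is constant on the fibers of $\Psi_{G,\uv}(\uf)$ and therefore factors as $\varphi=\psi\circ\Psi_{G,\uv}(\uf)$; the factor $\psi:G(\uv,\um)\to G'$ is again a contraction by the same reasoning as in the subdivision case, and $\uf$ is not the identity, so $\varphi$ factors nontrivially. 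It follows that every non-factoring contraction to $G'$ has source $G(\uv,\un)$ with $|\un|\le|G'|$; there are finitely many such $\un$, and all $\Hom$-sets in $\cGg$ are finite, so only finitely many such contractions exist.

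I do not anticipate any real obstacle: the argument is a direct transcription of the proof of Proposition~\ref{OIrF}, and the two points that deserve a line of care — that a contraction collapses exactly $|G(\uv,\un)|-|G'|$ edges, and that the factor $\psi$ is itself a contraction — are routine and are already used implicitly there.
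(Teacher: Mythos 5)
Your proof is correct and follows essentially the same route as the paper's: count edges of $G(\uv,\un)$, observe that once $|\un|$ is large enough the contraction must collapse a sprouted edge, and conclude that such a contraction factors nontrivially through an $\OIr$-morphism. The paper is slightly terser (it says "if $|\um|$ is sufficiently large" and does not write out the explicit $\uf$), but the substance matches.
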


\begin{proof}
The philosophy of the proof is nearly identical to that of Proposition \ref{OIrF}.
Property (F) says exactly that, for any graph $G'$ of genus $g$, the set of contractions 
from some $G(\uv,\um)$ to $G'$ that do not factor nontrivially is finite.
Let $\psi:G(\uv,\um)\to G'$ be given.
We have $$|G(\uv,\um)| = |G| + |\um|,$$ so $\psi$ must contract $|G| + |\um| - |G'|$ edges.
If $|\um|$ is sufficiently large, then at least one of those edges must be one of the newly sprouted edges.
We may then factor $\psi$ nontrivially by first contracting that edge.

This tells us that, if we are looking for contractions from some $G(\ue,\um)$ to $G'$ that do not factor nontrivially,
we only need to consider finitely many $r$-tuples $\um$.  The proposition then
follows from the fact that all Hom sets in $\cGgop$ are finite.
\end{proof}

The proof of the following corollary is identical to the proof of Corollary \ref{actual polynomial-sub}, so we omit it.

\begin{corollary}\label{actual polynomial-spr}
Let $k$ be a field, and suppose that $M\in\Rep_k(\cGgop)$ is $d$-smallish.  
Then there exists a multivariate polynomial $f_{M,G,\uv}(t_1,\ldots,t_r)$
of total degree at most $d$ such that, if $\um$ is sufficiently large in every coordinate, 
$$\dim_k M(G(\uv,\um)) = f_{M,G,\uv}(m_1,\ldots,m_r).$$
\end{corollary}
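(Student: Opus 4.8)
The plan is to run the argument from the proof of Corollary \ref{actual polynomial-sub} essentially word for word, with the subdivision functor $\Phi_{G,\ue}$ replaced throughout by the sprouting functor $\Psi_{G,\uv}$. First I would invoke Proposition \ref{smallish fg}, which tells us that a $d$-smallish module $M$ is finitely generated; exactly as in the subdivision case, this step yields no control over the degree of generation of $M$, only the bare fact of finite generation. This is the reason the statement is phrased in terms of a polynomial that is only valid once each coordinate of $\um$ is large, rather than a polynomial bound of degree $d$ valid for all $\um$ (the latter being what Proposition \ref{bounded growth} already supplies).

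Next I would combine Theorem \ref{fg} with Proposition \ref{OIrF2}: since $\Psi_{G,\uv}:\OIr\to\cGgop$ has property (F) and $M$ is finitely generated, the pullback $\Psi^*_{G,\uv}M$ is a finitely generated $\OIr$-module. I then appeal to the structure theory of Sam and Snowden for finitely generated modules over products of the category $\OI$, concretely \cite[Theorem 6.3.2, Proposition 6.3.3, and Theorem 7.1.2]{sam}, to produce a multivariate polynomial $f_{M,G,\uv}(t_1,\ldots,t_r)$ with
\[
\dim_k M(G(\uv,\um)) \;=\; \dim_k \Psi^*_{G,\uv}M([\um]) \;=\; f_{M,G,\uv}(m_1,\ldots,m_r)
\]
whenever $\um\in\N^r$ is sufficiently large in every coordinate. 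Finally, to pin down the total degree I would use Proposition \ref{bounded growth}: since $M$ is $d$-smallish and the sprouted graph satisfies $|G(\uv,\um)| = |G| + |\um|$, the quantity $\dim_k M(G(\uv,\um))$ is bounded above by a polynomial of degree $d$ in $|G|+|\um|$, which forces the total degree of $f_{M,G,\uv}$ to be at most $d$.

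The whole corollary is thus a purely formal consequence of results already established, and the only ingredient carrying genuine content is Proposition \ref{OIrF2}, whose proof (running parallel to Proposition \ref{OIrF}) is already complete; I do not anticipate any real obstacle. The one point deserving a moment's care is checking that the cited Hilbert-series results apply to the product category $\OIr$ rather than to $\OI$ alone, but this is precisely the generality in which \cite{sam} states them, so no additional argument is required. Since this mirrors the proof of Corollary \ref{actual polynomial-sub} so closely, it is reasonable to present it only in outline.
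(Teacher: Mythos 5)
Your proof is correct and matches the paper's exactly: the paper explicitly states that the proof of Corollary \ref{actual polynomial-spr} is identical to that of Corollary \ref{actual polynomial-sub} (with $\Phi_{G,\ue}$ replaced by $\Psi_{G,\uv}$ and Proposition \ref{OIrF} replaced by Proposition \ref{OIrF2}) and omits it. Your write-up, including the observation that $|G(\uv,\um)| = |G|+|\um|$ and the appeal to Propositions \ref{smallish fg}, \ref{OIrF2}, \ref{bounded growth}, and Theorem \ref{fg}, is the intended argument.
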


\subsection{Combining small modules}
This section is devoted to stating and proving a lemma that we will need in Section \ref{kl-smallness}.

Let $H$ be a graph of genus $h$ with no loops.  For each vertex $v\in \Vert(H)$, fix a natural number $g_v$
and a $\cGop_{g_v}$-module $N_v$.  Let $g:= h + \sum_v g_v$.
Consider the $\cGopg$-module $N$ defined by putting
$$N(G) := \bigoplus_{\psi:G\to H} \bigotimes_{v\in \Vert(H)} N_v\big(\psi^{-1}(v)\big),$$
where the sum is over all smooshings $\psi:G\to H$ with the property that $\psi^{-1}(v)$ has genus $g_v$
for all $v$. 
If $\varphi:G\to G'$ is a contraction, the induced map $N(G')\to N(G)$ kills the $\psi$ summand
unless all of the edges contracted by $\varphi$ are also contracted by $\psi$.  If this is the case, then there is an induced
smooshing $\psi':G'\to H$ whose fibers are contractions of the fibers of $\psi$, and these contractions
induce a natural map from the $\psi$ summand
of $N(G)$ to the $\psi'$ summand of $N(G')$.

\begin{lemma}\label{graph together}
In the above situation, suppose that $N_v$ is $d_v$-small for all $v\in \Vert(H)$, and let $d := |H| + \sum_v d_v$.
Then the $\cGopg$-module $N$ is $d$-small.
\end{lemma}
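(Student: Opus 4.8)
The plan is to realize $N$ explicitly as a subquotient of a module that is built from principal projectives of small degree, using the hypothesis that each $N_v$ is $d_v$-small together with the combinatorics of smooshings onto the fixed loopless graph $H$. First I would unwind the definition of $d_v$-smallness: for each $v$ there is a $\cGop_{g_v}$-module $\tilde N_v$, generated in degrees $\leq d_v$, with $N_v$ a subquotient of $\tilde N_v$. Since quotients and submodules of the summand-by-summand construction defining $N$ are again of the same shape, it suffices to treat the case where every $N_v = \tilde N_v$ is actually generated in degrees $\leq d_v$; the general case follows by passing to the same subquotient in each fiber simultaneously (the functoriality in the contraction $\varphi:G\to G'$ respects sub- and quotient-modules in each $\psi$-summand, because the transition maps are built from the $N_v$-transition maps).

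The core step is then to show that when each $N_v$ is generated in degrees $\leq d_v$, the module $N$ is generated in degrees $\leq d = |H| + \sum_v d_v$ — which a fortiori makes it $d$-small. Here I would argue as follows. Fix a graph $G$ of genus $g$ and an element in the $\psi$-summand $\bigotimes_v N_v(\psi^{-1}(v))$ of $N(G)$, for some smooshing $\psi:G\to H$. For each $v$, generation of $N_v$ in degrees $\leq d_v$ lets me write the $v$-component as a sum of images of elements $w_v\in N_v(H_v)$ under contractions $\psi^{-1}(v)\to H_v$, where each $H_v$ has at most $d_v$ edges and genus $g_v$. Performing all these contractions simultaneously on the fibers of $\psi$ produces a single contraction $\varphi: G\to G_0$, where $G_0$ is the graph obtained by gluing the $H_v$ along the edges of $H$ (so $G_0$ has a smooshing $\psi_0:G_0\to H$ with $\psi_0^{-1}(v) = H_v$). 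The given element is then the image under $\varphi^*$ — in the sense of the transition map $N(G_0)\to N(G)$ — of the tensor $\bigotimes_v w_v$ living in the $\psi_0$-summand of $N(G_0)$. Finally I would count edges of $G_0$: it has $|H|$ edges coming from $H$ (these are the edges of $G_0$ that are not contracted by $\psi_0$, and $H$ has no loops so contracting an edge of $G_0$ that maps isomorphically to an edge of $H$ would change genus — hence all of them survive) plus $\sum_v |H_v| \leq \sum_v d_v$ edges inside the fibers, so $|G_0|\leq |H| + \sum_v d_v = d$. Since $G$ and the element were arbitrary, $N$ is generated by the (finitely many, up to isomorphism) modules $P_{G_0}$ with $|G_0|\leq d$, pushed through the elements $\bigotimes_v w_v$; hence $N$ is generated in degrees $\leq d$, and in particular $d$-small.

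I expect the main obstacle to be bookkeeping rather than conceptual: making precise the claim that simultaneously contracting within the fibers of $\psi$ yields a well-defined contraction $\varphi:G\to G_0$ of genus-$g$ graphs with $\psi = \psi_0\circ\varphi$, and that the induced map $N(G_0)\to N(G)$ sends the $\psi_0$-summand to the $\psi$-summand in the way described. This requires checking that the fiber contractions glue compatibly along the (uncontracted) edges of $H$ and that contractibility of the combined fibers is preserved — which is where the loopless hypothesis on $H$ is used, since it guarantees each edge of $H$ has two distinct endpoints and so lifts unambiguously to the fiber-gluing. Once that is in place, the edge count and the reduction to the generated-in-low-degree case are routine, and the statement follows.
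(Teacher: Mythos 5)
Your argument is correct and follows essentially the same route as the paper's: both reduce to the case where each $N_v$ is generated in degrees $\leq d_v$ (the paper goes one step further, replacing $N_v$ by a single principal projective $P_{G_v}$ with $|G_v|=d_v$), and both rest on the count that a spanning class of $N(G)$ is pulled back along a contraction leaving at most $|H|+\sum_v d_v$ edges uncontracted. The only difference is in presentation: you explicitly assemble the small target $G_0$ by gluing the $H_v$ along the edges of $H$, whereas the paper notes that when $|G|>d$ some non-loop edge of $G$ must be contracted by the combined maps so the class is pulled back from $G/e$; the bookkeeping and the role of looplessness of $H$ are the same in both.
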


\begin{proof}
We may immediately reduce to the case where each $N_v$ is a principal projective.
That is, for all $v\in \Vert(H)$, we have 
$N_v = P_{G_v}$ for some fixed graph $G_v$ of genus $g_v$
with $d_v$ edges.  The $k$-module $N(G)$ is spanned by classes indexed by tuples of maps
in which the set of edges that do {\em not} get contracted has cardinality $d$ and includes all of the loops.  
Thus, if $G$ has more than $d$ edges, at least one edge is a non-loop that does not get contracted,
which means that our class may be pulled back from some nontrivial contraction of $G$.
\end{proof}

\section{Homology of configuration spaces}\label{sec:cc}
The purpose of this section is to prove Theorem \ref{treefing}.
Our main technical tool is the reduced \'Swi\k{a}tkowski complex of An, Drummond-Cole, and Knudsen \cite{ADK}.
Sections \ref{sec:sw} and \ref{sec:func} are reproduced
from \cite[Sections 3.1 and 3.2]{PR-trees} for the reader's convenience.

\subsection{The reduced \'Swi\k{a}tkowski complex}\label{sec:sw}
Let $A_G$ be the integral polynomial ring generated by the edges of $G$. 
For any vertex $v$, let $S(v)$ denote the free $A_G$-module generated by the symbol $\emptyset$
along with all half-edges of $G$ with vertex $v$.
We equip $S(v)$ with a bigrading by defining an edge to have degree $(0,1)$, $\emptyset$ to have degree $(0,0)$,
and a half-edge to have degree $(1,1)$.  Let $\tS(v)\subset S(v)$ be the submodule generated by the elements
$\emptyset$ and $h - h'$ for all half-edges $h$ and $h'$.  We equip $\tS(v)$ with an $A_G$-linear differential $\partial_v$
of degree $(-1,0)$ by putting $$\partial (h-h') := \big(e(h)-e(h')\big) \emptyset  \and \partial\emptyset = 0.$$
We then define the
\textbf{reduced \'Swi\k{a}tkowski complex}
\[
\Sw(G) \;\;:=\;\; \bigotimes_{v \in \Vert(G)} \widetilde{S}(v),
\]
where the tensor product is taken over the ring $A_G$; this is a bigraded free $A_G$-module with a differential $\partial$.

For any graph $G$, let $H_\bullet\big(\UConf_\star(G)\big)$ denote the bigraded abelian group
\[
H_\bullet\big(\UConf_\star(G)\big) := \bigoplus_{(i,n)} H_i\big(\UConf_n(G); \Z\big).
\]

\begin{theorem}{\em \cite[Theorem 4.5 and Proposition 4.9]{ADK}}\label{swiacompute}
If $G$ has no isolated vertices, then
there is an isomorphism of bigraded abelian groups
\[
H_\bullet\big(\UConf_\star(G)\big) \cong H_\bullet\big(\Sw(G)\big).
\]
\end{theorem}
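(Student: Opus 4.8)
This theorem is due to An--Drummond-Cole--Knudsen, and since the present paper invokes it rather than reproving it, I will only describe the shape of their argument and flag which step carries the weight. The proof splits into two essentially independent parts. First, the \emph{unreduced} \'Swi\k{a}tkowski complex $\widehat S(G) := \bigotimes_{v\in\Vert(G)}\widehat S(v)$ --- where $\widehat S(v)$ is the free $A_G$-module on $\emptyset$, the half-edges at $v$, and one additional generator $p_v$ of bidegree $(0,1)$ (``a particle parked at $v$''), with differential $\partial h = e(h)\,\emptyset - p_v$ and $\partial\emptyset = \partial p_v = 0$ --- computes $H_\bullet\big(\UConf_\star(G)\big)$. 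Second, the evident inclusion $\Sw(G)\hookrightarrow\widehat S(G)$ is a quasi-isomorphism.

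The second part is purely local and easy. For each vertex $v$, $\tS(v)$ is a subcomplex of $\widehat S(v)$ (its differential is the restriction of $\partial$, since $\partial(h-h') = (e(h)-e(h'))\emptyset$), and, picking any half-edge $h_1$ at $v$, the quotient $\widehat S(v)/\tS(v)$ is free over $A_G$ on the classes of $p_v$ (homological degree $0$) and $h_1$ (homological degree $1$) with $\partial\bar h_1 = -\bar p_v$; hence $\widehat S(v)/\tS(v)\cong[A_G\xrightarrow{\sim}A_G]$ is acyclic. As both $\tS(v)$ and $\widehat S(v)$ are bounded complexes of free $A_G$-modules, the inclusion $\tS(v)\hookrightarrow\widehat S(v)$ is therefore a chain homotopy equivalence. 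Tensoring these equivalences together over $A_G$ one vertex at a time --- using that tensoring a chain homotopy equivalence with a fixed complex yields a chain homotopy equivalence --- shows that $\Sw(G)=\bigotimes_v\tS(v)\hookrightarrow\bigotimes_v\widehat S(v)=\widehat S(G)$ is a chain homotopy equivalence, in particular a quasi-isomorphism. This is \cite[Proposition 4.9]{ADK}.

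The first part is where the topology enters, and it is the main obstacle. The idea is to pass to a finite combinatorial model. One first shows that $H_\bullet\big(\widehat S(G)\big)$ is unchanged under subdivision of $G$ --- there is a natural quasi-isomorphism relating $\widehat S(G)$ to $\widehat S(G')$ whenever $G'$ refines $G$ --- which lets one assume $G$ is ``sufficiently subdivided'' relative to the given particle count $n$. In that range, Abrams' theorem, in the sharpened form due to Prue--Scrimshaw and L\"utgehetmann, identifies $\UConf_n(G)$, up to deformation retraction, with the unordered discretized configuration space: the cube complex whose cells are $n$-tuples of cells of $G$ with pairwise disjoint closures, taken modulo $S_n$. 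One then collapses this cube complex down to the weight-$n$ part of $\widehat S(G)$ by a discrete Morse argument (in the spirit of Farley--Sabalka, but carried further), checking that the surviving critical cells are exactly the \'Swi\k{a}tkowski generators and that the Morse differential matches $\partial$; equivalently, and closer to the route actually taken in \cite{ADK}, one decomposes the configuration-space functor over the poset of subdivisions of $G$ and recovers $\widehat S(G)$ as the resulting local-to-global object. Assembling the two parts gives $H_\bullet\big(\UConf_\star(G)\big)\cong H_\bullet\big(\widehat S(G)\big)\cong H_\bullet\big(\Sw(G)\big)$, which is Theorem \ref{swiacompute}; the genuinely hard inputs --- the subdivision-invariance of $H_\bullet\big(\widehat S(G)\big)$ and the combinatorial collapse onto the \'Swi\k{a}tkowski generators, in particular at essential vertices of valence greater than $3$, which lie outside \'Swi\k{a}tkowski's original treatment --- are carried out in \cite[Theorem 4.5]{ADK}.
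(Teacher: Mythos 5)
You correctly recognize that the paper does not prove this theorem but simply invokes it from \cite{ADK}, and your outline of the ADK argument is accurate: Proposition 4.9 of \cite{ADK} is the local chain homotopy equivalence showing the reduced \'Swi\k{a}tkowski complex $\Sw(G)$ computes the same homology as the unreduced one $\widehat S(G)$ (with the extra bidegree-$(0,1)$ ``parked particle'' generator $p_v$), and Theorem 4.5 of \cite{ADK} is the substantial step identifying $H_\bullet(\widehat S(G))$ with $H_\bullet(\UConf_\star(G))$ via subdivision-invariance and the discretized-configuration-space model, carried out by the functorial/local-to-global decomposition over subdivisions rather than by an explicit Morse collapse. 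Since the paper's ``proof'' is just the citation, there is nothing to compare beyond noting that your summary matches what the cited references actually do, and you have correctly flagged where the genuine difficulty lies.
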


\begin{remark}
If $G$ is connected, then the only way that $G$ can have isolated vertices
is if $G$ is a single point.  In this case, $H_\bullet\big(\Sw(G)\big) = \Sw(G) = \Z$, concentrated in bidegree $(0,0)$,
whereas $H_\bullet\big(\UConf_\star(G)\big) \cong \Z \oplus \Z$, concentrated in bidegrees $(0,0)$ and $(0,1)$.
Thus the reduced \'Swi\k{a}tkowski complex fails only to recognize that the degree zero homology of $\UConf_1(G)$ is nontrivial.
\end{remark}

\subsection{Functoriality}\label{sec:func}
If $\varphi:G \rightarrow G'$ is contraction, then there is a natural map of differential bigraded modules
\[
\widetilde\varphi^\ast:\Sw(G') \rightarrow \Sw(G),
\]
which induces a map $$\varphi^\ast: H_i\big(\UConf_n(G'); \Z\big)\to H_i\big(\UConf_n(G); \Z\big)$$
by passing to homology \cite[Lemma C.7]{ADK}.
To describe $\widetilde\varphi^\ast$, we first consider the case where the number of edges of $G$ is one greater than the number of edges of $G'$;
we call such a contraction $\varphi$ a {\bf simple contraction}.
We identify the unique edge of $G$ that is contracted by $\varphi$ with the interval $[0,1]$.
Let $h_0$ (respectively $h_1$) be the half-edge of $G$ consisting of the vertex $0$ (respectively $1$) and the edge $[0,1]$.
Let $w'\in G'$ be the image of the edge $[0,1]$.
Each edge of $G'$ is mapped to isomorphically by a unique edge of $G$, and similarly for half-edges.  
This gives us a canonical ring homomorphism $A_{G'} \to A_G$ along with an $A_{G'}$-module homomorphism
$$\bigotimes_{v' \in \Vert(G')\smallsetminus\{w'\}} \widetilde{S}(v') \;\;\;\to \bigotimes_{v \in \Vert(G)\smallsetminus\{0,1\}} \widetilde{S}(v).$$
Given a half-edge $h'$ of $G'$ with $v(h') = w'$, let $h$ be the unique half-edge of $G$ mapping to $h'$.
We then define an $A_{G'}$-module homomorphism $$\widetilde{S}(w')\to \widetilde{S}(0)\otimes \widetilde{S}(1)$$
by the formula $$\emptyset\mapsto\emptyset\otimes\emptyset\and h'\mapsto
\begin{cases}
(h-h_0) \otimes \emptyset\;\;\;\text{if $v(h)=0$}\\
\emptyset \otimes (h-h_1)\;\;\;\text{if $v(h)=1$.}
\end{cases}$$
Tensoring these two maps together, we obtain the homomorphism $\widetilde\varphi^\ast:\Sw(G') \rightarrow \Sw(G)$,
and it is straightforward to check that this homomorphism respects the differential.
Arbitrary contractions may be obtained as compositions of simple contractions, and the induced homomorphism is independent of choice of factorization into simple contractions.
To summarize, we have the following result.

\begin{theorem}\label{summary}{\em \cite{ADK}}
There is a bigraded differential $\cGopg$-module that assigns to each graph $G$ the reduced \'Swi\k{a}tkowski complex $\Sw(G)$.
The homology of this bigraded differential $\cGopg$-module is the bigraded $\cGopg$-module that assigns to each graph $G$
the bigraded Abelian group $H_{\bullet}\big(\UConf_{\star}(G)\big)$.
\end{theorem}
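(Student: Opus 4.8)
The plan is to repackage the constructions of the preceding paragraphs --- which, following \cite{ADK}, attach to each simple contraction a differential-respecting map of reduced \'Swi\k{a}tkowski complexes --- into a single functoriality statement, and then feed in the computational input of \cite{ADK}. Since the theorem is attributed to \cite{ADK}, the real content of the proof is the bookkeeping that turns their pointwise statements into a statement about $\cGopg$-modules.

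First I would pin down the assignment $\varphi\mapsto\widetilde\varphi^{*}$ on \emph{all} of $\cGgop$, not merely on simple contractions. Any contraction $\varphi:G\to G'$ factors as a finite composite of simple contractions followed by a graph isomorphism; set $\widetilde\varphi^{*}$ to be the corresponding composite, where a graph isomorphism acts by the tautological relabeling of edges, half-edges, and vertices. The one genuine point to check is that $\widetilde\varphi^{*}$ is independent of the chosen factorization --- this is exactly the assertion recorded just above the theorem statement (and carried out in \cite{ADK}); concretely one reduces to two elementary moves, swapping the order in which two independent edges are contracted and inserting or removing an isomorphism, and verifies that the explicit formula for $\widetilde{S}(w')\to\widetilde{S}(0)\otimes\widetilde{S}(1)$ is compatible with both. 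Granting independence of factorization, functoriality ($\widetilde{(\psi\circ\varphi)}^{*}=\widetilde\varphi^{*}\circ\widetilde\psi^{*}$ and $\widetilde{\id}^{*}=\id$) is immediate, and since each $\widetilde\varphi^{*}$ is a morphism of bigraded abelian groups commuting with the differential (checked above for simple contractions, hence for composites), the assignment $G\mapsto\Sw(G)$ is a functor from $\cGgop$ to bigraded differential abelian groups, i.e. a bigraded differential $\cGopg$-module. Applying homology, which is itself functorial, then produces the bigraded $\cGopg$-module $G\mapsto H_{\bullet}\big(\Sw(G)\big)$.

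Second, I would upgrade Theorem \ref{swiacompute} from a pointwise isomorphism to an isomorphism of $\cGopg$-modules. Every object of $\cGg$ is a connected graph of genus $g$, and the only such graph with an isolated vertex is the single point, which arises only when $g=0$; for all other $G$ the hypothesis of Theorem \ref{swiacompute} (= \cite[Theorem 4.5 and Proposition 4.9]{ADK}) holds, and the naturality of the resulting isomorphism with respect to contractions is precisely \cite[Lemma C.7]{ADK}, which says that $\widetilde\varphi^{*}$ models the map $\varphi^{*}$ on configuration-space homology. The single-point object (present only in genus $0$) is dealt with separately via the correction noted in the Remark following Theorem \ref{swiacompute}: all nontrivial transition maps into and out of that object are forced, so one checks directly that the $\cGopg$-module structures agree there as well. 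Combining the two isomorphisms yields the claimed identification of bigraded $\cGopg$-modules.

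The main obstacle is the independence-of-factorization property for $\widetilde\varphi^{*}$ together with the naturality of the \cite{ADK} comparison map; both are essentially contained in \cite{ADK} (the latter in their Appendix~C), so once the functorial packaging above is spelled out, this theorem is genuinely a summary and its proof reduces to invoking \cite{ADK}.
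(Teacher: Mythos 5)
Your proposal is essentially the paper's own (implicit) argument: the paper has no separate proof of this theorem, it simply records the construction of $\widetilde\varphi^*$ on simple contractions, asserts independence of factorization (citing \cite{ADK}), and invokes Theorem \ref{swiacompute} together with \cite[Lemma C.7]{ADK}. Your packaging of this into an explicit functoriality statement, followed by an isomorphism of $\cGopg$-modules, is the right reading, and your care in including isomorphisms alongside simple contractions in the factorization is a small improvement in precision over the paper's phrasing.

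There is one genuine slip. You claim that at the single-point graph (present only in $\cG_0$), ``one checks directly that the $\cGopg$-module structures agree there as well.'' This contradicts the paper's own Remark after Theorem \ref{swiacompute}: there $H_\bullet\big(\Sw(G)\big)=\Z$ in bidegree $(0,0)$ while $H_\bullet\big(\UConf_\star(G)\big)\cong\Z\oplus\Z$ in bidegrees $(0,0)$ and $(0,1)$, so the two modules genuinely differ at the point. The correct conclusion is that Theorem \ref{summary} as literally stated fails at the single-point object in genus $0$, and must be read either as excluding that object or as holding up to the $\Z$ in bidegree $(0,1)$ that the reduced \'Swi\k{a}tkowski complex fails to see. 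This discrepancy is harmless for the later smallness and finite-generation consequences, since it concerns a single graph, but your assertion of agreement is not correct as written.
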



\subsection{Smallness}
We are now ready to prove Theorem \ref{treefing} and Corollary \ref{torsion-cor}.

\begin{proof}[Proof of Theorem \ref{treefing}.]
Given a graph $G$ and a pair of natural numbers $i$ and $n$, let $\Sw(G)_{i,n}$ be the degree $(i,n)$ summand of the 
reduced \'Swi\k{a}tkowski complex. 
We will show that the $\cGopg$-module taking $G$ to the abelian group $\Sw(G)_{i,n}$
is generated in degrees $\leq g+i+n$.  Smallness will then follow from Theorem \ref{summary}.

The group $\Sw(G)_{i,n}$ is generated by elements of the form 
$$\sigma := e_1\cdots e_{n-i}\;\; \bigotimes_{j=1}^{i}\, (h_{j0} - h_{j1})\;\;\; 
\otimes \bigotimes_{v\notin\{v_1,\ldots,v_i\}} \!\!\!\emptyset,$$
where $e_1,\ldots,e_{n-i}$ are edges (not necessarily distinct), $v_1,\ldots,v_i$ are vertices (distinct), and, for each $j$,
$h_{j0}$ and $h_{j1}$ are half-edges at the vertex $v_j$.
For a particular $\sigma$ of this form, we will call $\{v_1,\ldots,v_i\}$ the set of {\bf distinguished vertices}.
Without loss of generality, we may assume that there is some integer $r$ with $0\leq r\leq i$ such that $v_j$
is adjacent to some distinguished vertex (possibly itself) if and only if $j\leq r$.  We may also assume that, if $j\leq r$,
$e(h_{j1})$ connects $v_j$ to some distinguished vertex (again, possibly $v_j$ itself).  If not, then $\sigma$ may be written as a difference
of classes of this form.

We call an edge $e$ a {\bf distinguished edge} if one of the following five conditions hold:
\begin{itemize}
\item $e$ is a loop
\item $e$ connects two distinguished vertices
\item $e = e_k$ for some $k\leq n-i$
\item $e = e(h_{j0})$ for some $j\leq i$
\item $e = e(h_{j1})$ for some $j\leq i$.
\end{itemize}
We will now argue that there are at most $g+i+n$ distinguished edges.
Let $t$ be the number of loops that are not at distinguished vertices.
Let $H$ be the induced subgraph on $\{v_1,\ldots,v_r\}$, which in particular contains all of the loops that are at distinguished vertices.
Since $H$ is a subgraph of $G$, and is missing $t$ loops, it has genus at most $g-t$, which means that it has at most
$r + g - t $ edges.  (Equality is achieved if and only if $r=0$ and $G$ is obtained by attaching $g$ loops to a tree,
in which case $H$ is empty and $t=g$.)
This means that the total number of distinguished edges is at most
$$t + (r + g - t) + (n-i) + i + (i-r) = g+i+n.$$

Let $G$ be given with $|G| > g+i+n$.  Since there are at most $g+i+n$ distinguished edges, we may choose an edge $e$
which is not distinguished.
Let $G' := G/e$ be the graph obtained from $G$ by contracting $e$, and let $\varphi:G\to G'$ be the canonical simple
contraction.  
Let $e_k'$ be the image of $e_k$ in $G'$, $v_j'$ the image of $v_j$ in $G'$, $h_{j0}'$
the image of $h_{j0}$ in $G'$, and $h_{j1}'$ the image of $h_{j1}$ in $G'$.  Let
$$\sigma' := e'_1\cdots e'_{n-i}\;\; \bigotimes_{j=1}^{i}\, (h_{j0}' - h_{j1}')\;\;\; 
\otimes \bigotimes_{v'\notin\{v'_1,\ldots,v'_i\}} \!\!\!\emptyset\;\;\;\in\;\;\;\Sw(G')_{i,n}.$$
We claim that $\sigma=\widetilde\varphi^*\sigma'$.

If $e$ is not incident to any vertex $v_j$, this is obvious.
The interesting case occurs when $e$ is incident to one of the distinguished
vertices.  Assume without loss of generality that it is incident to $v_1$, and let $w$ be the other end point of $e$.
Let $h$ be the half-edge of $T$ with $e(h) = e$ and $v(h) = v_1$ (this uniquely characterizes $h$ because $e$ is not a loop).
Applying the map $\varphi^*$ replaces each $e_k'$ with $e_k$.  When $j>1$, it replaces $h_{j0}'$ with $h_{j0}$
and $h_{j1}'$ with $h_{j1}$.  It replaces $h_{10}'$ with $h_{10} - h$ and $h_{11}'$ with $h_{11}-h$.
This means that it replaces $h_{j0}' - h_{j1}'$ with $h_{j0} - h_{j1}$, and therefore that 
$\widetilde\varphi^*\sigma' = \sigma$.

We thus conclude that every element of $\Sw(G)_{i,n}$ is a linear combination
of elements in the images of map associated with simple contractions; this completes the proof.
\end{proof}

\begin{proof}[Proof of Corollary \ref{torsion-cor}.]
Let $T_{g,i,n}\in\Rep_{\Z}(\cGgop)$ be the module that assigns to each graph $G$ the torsion subgroup of $H_i\big(\UConf_n(G); \Z\big)$.
By Theorem \ref{treefing}, $T_{g,i,n}$ is a submodule of a finitely generated module, and is therefore itself finitely generated.
We may then take $d_{g,i,n}$ to be the least common multiple of the exponents of the generators.
\end{proof}

\section{Kazhdan--Lusztig coefficients}\label{sec:kl}
For each $G$, let $R_G$ be the $\mathbb{C}$-subalgebra of rational functions in the variables $\{x_v\mid v\in\Vert(G)\}$
generated by the elements $\left\{\frac{1}{x_v-x_w}\Bigmid \text{$v\neq w$ adjacent}\right\}$, and let $X_G := \Spec R_G$.
The ring $R_G$ is called the {\bf Orlik-Terao algebra} of $G$ and the variety $X_G$ is called the {\bf reciprocal plane} of $G$.
We will be interested in the intersection homology group $\IH_{2i}(X_G)$ with coefficients in the complex numbers.

If $\varphi:G\to G'$ is a contraction, we obtain a canonical map from 
$\IH_{2i}(X_{G'})$ to $\IH_{2i}(X_G)$, and these maps compose in the expected way
\cite[Theorem 3.3(1,3)]{fs-braid}.  The purpose of this section is to study the $\cGop$-module
$\IH_{2i}$ that takes $G$ to $\IH_{2i}(X_G)$, and in particular to prove Theorem \ref{kl-main}.  

\subsection{Orlik-Solomon algebras}\label{sec:os}
For each $G$, let $\OS^\bullet(G)$ be the {\bf Orlik-Solomon algebra} \cite{OS80}
of the matroid associated with $G$ with coefficients in the complex numbers.
For any natural number $i$, we will denote the linear dual of $\OS^i(G)$ by $\OS_i(G)$.
For the purposes of this paper, we will need to know four things about the Orlik-Solomon algebra:
\begin{itemize}
\item $\OS^1(G)$ is spanned by classes $\{x_e\mid e\in\Edge(G)\}$, with relations $x_e = x_f$ if $e$ and $f$ are parallel
and $x_e = 0$ if $e$ is a loop.
\item $\OS^\bullet(G)$ is generated as a $\mathbb{C}$-algebra by $\OS^1(G)$.
\item If $G'$ is a contraction of $G$, we obtain a functorial map $\OS^\bullet(G)\to\OS^\bullet(G')$ 
by killing the generators indexed by contracted edges.  This in turn induces a map $\OS_\bullet(G')\to \OS_\bullet(G)$.
\item If $G$ is the disjoint union of $G_1$ and $G_2$, then $\OS^\bullet(G) \cong \OS^\bullet(G_1)\otimes \OS^\bullet(G_2)$.
\end{itemize}
By the third bullet point above, $\OS_i$ is a $\cGgop$-module for any natural number $i$.

\begin{lemma}\label{os-small}
For any natural number $i$, $\OS_i$ is $(g+i)$-small.
\end{lemma}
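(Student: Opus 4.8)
The plan is to reduce to understanding the $\OS^1$ part and then exploit the fact that the Orlik--Solomon algebra is generated in degree one. Concretely, the degree-$i$ piece $\OS^i(G)$ is a quotient of $\Sym^i \OS^1(G)$, hence a quotient of $(\OS^1(G))^{\otimes i}$, so dualizing, $\OS_i(G)$ is a submodule of $\big(\OS_1(G)\big)^{\otimes i}$. Functorially in contractions this says that the $\cGgop$-module $\OS_i$ is a submodule of $(\OS_1)^{\otimes i}$, so it suffices to show that $(\OS_1)^{\otimes i}$ is $(g+i)$-small, and for that it is enough to show $(\OS_1)^{\otimes i}$ is a quotient of a module generated in degrees $\leq g+i$.

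Next I would compare $\OS_1$ with the module $E$ from Lemma~\ref{E}. By the first bullet point of Section~\ref{sec:os}, $\OS^1(G)$ is the quotient of the free module on the edges of $G$ by the relations identifying parallel edges and killing loops; dualizing, $\OS_1(G)$ is the submodule of $\Hom_k(E(G),k)$ (equivalently, after choosing bases, a subspace of $E(G)$) spanned by differences $x_e - x_f$ of classes dual to parallel edges together with classes dual to non-loop edges. In any case $\OS_1$ is a subquotient of $E$, hence $(\OS_1)^{\otimes i}$ is a subquotient of $E^{\otimes i}$. Lemma~\ref{E} says $E^{\otimes i}$ is generated in degrees $\leq g+i$, so $E^{\otimes i}$ is $(g+i)$-small (trivially, being generated in degrees $\leq g+i$), and therefore any subquotient of it, in particular $\OS_i$, is $(g+i)$-small by definition of smallness.

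The one point requiring a little care — and the step I would expect to be the main (albeit minor) obstacle — is checking that the chain of natural transformations really is compatible with the $\cGgop$-action, so that ``subquotient'' is meant in the categorical sense. Specifically: the surjection $\Sym^i\OS^1 \twoheadrightarrow \OS^i$ is natural because $\OS^\bullet$ is generated in degree one and the transition maps are algebra maps (third bullet point of Section~\ref{sec:os}); dualizing reverses arrows and turns this into a natural injection $\OS_i \hookrightarrow (\OS_1)^{\otimes i}$ of $\cGgop$-modules; and the comparison between $\OS_1$ and $E$ is natural because the defining relations of $\OS^1$ (parallel edges, loops) are preserved under the ``kill contracted generators'' maps. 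Once these naturality checks are in place, the result is immediate from Lemma~\ref{E} and the definition of $d$-small.
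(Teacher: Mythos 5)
Your argument is correct and is essentially the paper's own proof, just with the intermediate comparison via $\OS_1$ spelled out: the paper goes directly from ``$\OS^\bullet$ is generated in degree one, and $\OS^1(G)$ is a quotient of $E(G)^*$'' to ``$\OS_i$ is a submodule of $E^{\otimes i}$,'' then invokes Lemma~\ref{E}. Your extra step through $(\OS_1)^{\otimes i}$ and the remark that subquotients of modules generated in degrees $\leq g+i$ are $(g+i)$-small by definition are both fine, as are the naturality checks you flag.
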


\begin{proof}
Recall from Lemma \ref{E} the $\cGgop$-module $E$ that assigns to any graph the $\mathbb{C}$-vector space with basis given by the edges.
By the first two bullet points above, $\OS^i(G)$ is a quotient of the $\ith$ tensor power of $E(G)^*$, therefore $\OS_i$ is a submodule of 
$E^{\otimes i}$.  Lemma \ref{E} says that $E^{\otimes i}$ is generated in degrees $\leq g+i$, therefore $\OS_i$ is $(g+i)$-small.
\end{proof}

\subsection{The spectral sequence}\label{sec:ss}
A subgraph $F\subset G$ with the same
vertex set is called a {\bf flat} of $G$
if its edge set is the set of contracted edges of a smooshing to a graph with no loops, which is denoted $G/F$.
Thus the vertex set of $G/F$ is identified with the set of connected components of $F$,
and the edge set is identified with $\Edge(G)\smallsetminus\Edge(F)$.
The {\bf rank} of $F$ is defined as the number of vertices
minus the number of connected components, and the {\bf corank} of $F$, denoted $\crk F$, is
the number of connected components minus 1.
The following theorem was proved in \cite[Theorems 3.1 and 3.3]{fs-braid}; see also \cite[Theorem 4.2]{PR-trees}.

\begin{theorem}\label{spectral}
For any graph $G$ and natural number $i$, there is a first quadrant homological spectral sequence $E(-,i)$
in the category of $\cGgop$-modules converging to $\IH_{2i}$,
with $$E(G,i)^1_{p,q} = \bigoplus_{\crk F = p}\OS_{2i-p-q}(F) \otimes \IH_{2(i-q)}(X_{G/F}).$$
If $\varphi:G\to G'$ is a contraction, the induced map 
$E(G',i)^1_{p,q}\to E(G,i)^1_{p,q}$ kills the $F$-summand unless $F$ contains all of the edges contracted by $\varphi$.  
In this case, the image of $F$
in $G'$ is a flat $F'$ of $G'$, and $G'/F'$ is canonically isomorphic to $G/F$.  The map takes
the $F$-summand of $E(G,i)^1_{p,q}$ to the $F'$-summand of $E(G',i)^1_{p,q}$ by 
the canonical map $\OS_{2i-p-q}(F)\to\OS_{2i-p-q}(F')$ tensored with the identity map on $\IH_{2(i-q)}(X_{G/F})$.
\end{theorem}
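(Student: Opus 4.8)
The plan is to recall the construction behind \cite[Theorems 3.1 and 3.3]{fs-braid} and then verify that it is natural with respect to contractions, so that the spectral sequence, together with all of its pages, lands in the category of $\cGgop$-modules. The geometric input is the stratification of the reciprocal plane $X_G$ whose strata are indexed by the flats $F$ of $G$; the contribution of the flat $F$ to the $E^1$-page combines the cohomology of the hyperplane-arrangement complement of $F$ --- namely the Orlik--Solomon algebra $\OS^\bullet(F)$ --- with the intersection homology of the reciprocal plane $X_{G/F}$ of the quotient, the latter recording the transverse singularity of $X_G$ along that stratum. Concretely, the spectral sequence is the one obtained by filtering $X_G$ by the closed unions of its strata according to the corank of the corresponding flat and then taking intersection homology; the local structure of the stratification gives exactly the asserted $E^1_{p,q}=\bigoplus_{\crk F = p}\OS_{2i-p-q}(F)\otimes\IH_{2(i-q)}(X_{G/F})$. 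Because $X_G$ is a cone --- it carries a contracting $\mathbb{C}^\times$-action with a unique fixed point --- its intersection homology is pure and concentrated in even degrees, which is what confines the spectral sequence to the first quadrant and makes its abutment $\IH_{2i}(X_G)$ for each fixed $i$. All of this is carried out in \cite{fs-braid}, and for trees in \cite[Theorem 4.2]{PR-trees}, so I would quote it rather than reprove it.

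The part that matters here is functoriality. Given a contraction $\varphi\colon G\to G'$ with contracted edge set $E$, I would first identify $X_{G'}$ with a closed subvariety of $X_G$ in a way compatible with the two stratifications: the flats of $G'$ are precisely the images $F':=F/E$ of the flats $F$ of $G$ whose edge set contains $E$, one has a canonical isomorphism $G/F\cong G'/F'$, and the contraction $F\to F'$ --- which kills the generators indexed by the edges of $E$ --- induces the Orlik--Solomon transition map $\OS_\bullet(F')\to\OS_\bullet(F)$. I would then check that the map on intersection homology attached to this closed embedding is compatible with the two corank filtrations, hence defines a morphism of spectral sequences, and that on $E^1$ this morphism carries the summand indexed by a flat $F'=F/E$ of $G'$ into the summand indexed by $F$ via $\OS_\bullet(F')\to\OS_\bullet(F)$ tensored with the identity of $\IH_{2(i-q)}(X_{G/F})=\IH_{2(i-q)}(X_{G'/F'})$, and has no component into the summands indexed by flats of $G$ that do not contain $E$. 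Compatibility with composition of contractions then reduces to the compatibilities of the Orlik--Solomon functor and of the canonical isomorphisms $G/F\cong G'/F'$.

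The step I expect to be the main obstacle is that last compatibility: verifying that the closed embedding $X_{G'}\hookrightarrow X_G$ is normally nonsingular along each stratum --- so that the intersection complex of $X_G$ restricts to that of $X_{G'}$ in the expected way --- and that it interacts with the corank filtration finely enough to produce an honest morphism of the cited spectral sequences with the stated effect on the $E^1$-page. Once this stratified compatibility is established, naturality of the differentials and of all higher pages is automatic, and the theorem follows; since this is essentially the content of \cite[Theorem 3.3]{fs-braid}, I would cite it rather than reprove it.
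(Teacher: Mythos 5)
Your proposal matches the paper's treatment: the paper does not reprove this result, but simply cites \cite[Theorems 3.1 and 3.3]{fs-braid} (and \cite[Theorem 4.2]{PR-trees}), exactly as you propose to do. Your accompanying conceptual sketch of what lies behind the citation --- the stratification of $X_G$ by flats, with $\OS^\bullet(F)$ accounting for the stratum direction and $\IH_\bullet(X_{G/F})$ for the transverse singularity, and functoriality via the closed embedding $X_{G'}\hookrightarrow X_G$ obtained by setting the coordinates of the contracted edges to zero --- is accurate and is consistent with the content of the cited theorems, so the two approaches are essentially the same.
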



\subsection{Smallness}\label{kl-smallness}

\begin{proof}[Proof of Theorem \ref{kl-main}.]
By Theorem \ref{spectral}, $\IH_{2i}$ admits a filtration whose associated graded is isomorphic to the infinity page of $E(-,i)$,
therefore it is sufficient to show that, for all $p$ and $q$, $E(-,i)^1_{p,q}$ is
$(2i-1+g)$-small.

The set of flats of $G$ is in bijection with equivalence classes of smooshings with source $G$
for which the target has no loops, where two such smooshings
are equivalent if they differ by an automorphism of the target.  We therefore have
$$E(G,i)^1_{p,q} \;\cong \bigoplus_{|\Vert(H)| = p+1}\left( \bigoplus_{\substack{\psi:G\to H\\ \text{smooshing}}} 
\left(\bigotimes_{v\in\Vert(H)} \OS_*\big(\psi^{-1}(v)\big)\right)_{2i-p-q}
 \bigotimes \;\;\IH_{2(i-q)}(X_{H})\right)^{\Aut(H)}.$$
If we fix $H$ and require that the graph $\psi^{-1}(v)$ has genus $g_v$,
Lemmas \ref{graph together} and \ref{os-small} together imply that $E(-,i)^1_{p,q}$ is $d$-small, where $d = |H| + 2i-p-q + \sum_v g_v$.
If $h$ is the genus of $H$, then $|H| = p + h$ and $\sum_v g_v = g-h$, so $d = 2i + g - q$.  Since this is independent of the choice
of $H$ or of the numbers $g_v$, we can conclude that $E(-,i)^1_{p,q}$ is $(2i+g-q)$-small.

Finally, we note that $\IH_{2(i-q)}(X_{H}) = 0$ unless $2(i-q) < p$ or $q=i$ and $p=0$ \cite[Proposition 3.4]{EPW}, while $\OS_{2i-p-q}(F) = 0$
unless $p+q \leq 2i$.
In particular $E(-,i)^1_{p,0} = 0$ for all $p$, which implies that each $E(-,i)^1_{p,q}$ is
$(2i-1+g)$-small.
\end{proof}

\begin{remark}
The module $\IH_0 = E(-,0)^1_{0,0}$ is the constant module taking every graph to $\mathbb{C}$ and every morphism to the identity.
This module is $g$-small rather than $(g-1)$-small, which is why we required that $i$ be positive in the statement of Theorem \ref{kl-main}.
Indeed, one can see that the last sentence of the proof fails when $p=q=i=0$.
\end{remark}

\begin{example}\label{cycle}
When $g=1$, Theorem \ref{kl-main} and Corollary \ref{actual polynomial-sub} combine to say that the 
$\ith$ Kazhdan--Lusztig coefficient of the $n$-cycle should eventually agree with a polynomial in $n$
of degree at most $2i$.  In fact, it is equal to \cite[Theorem 1.2(1)]{PWY}
$$\frac{1}{i+1}\binom{n-i-2}{i}\binom{n}{i},$$ so our result is sharp.
\end{example}

\begin{example}
Let $G_g(a_1,\ldots,a_{g+1})$ be the genus $g>0$ graph obtained by taking the graph with two vertices and $g+1$ edges between
them and subdividing the $i^\text{th}$ edge into $a_i$ pieces.  Theorem \ref{kl-main} and Corollary \ref{actual polynomial-sub} say that the first Kazhdan--Lusztig
coefficient of $G_g(a_1,\ldots,a_{g+1})$ should eventually agree with a multivariate polynomial of total degree at most $g+1$ in $a_1,\ldots,a_{g+1}$.

The first Kazhdan--Lusztig
coefficient is equal to the number of corank 1 flats minus the number of rank 1 flats \cite[Proposition 2.12]{EPW}.
If $a_i > 1$ for all $i$, this is equal to
$$\prod_{i=1}^{g+1} a_i + \sum_{i=1}^{g+1} \binom{a_i}{2} - \sum_{i=1}^{g+1} a_i.$$
Thus our result is again sharp.
\end{example}

\section{Outer Category}\label{sec:oc}
The purpose of this section is to describe how one may use the category $\cGgred$ to compute cohomology groups
of $\Out(F_g)$ with arbitrary coefficients.

\subsection{Nerves of categories}
We begin by briefly reviewing some facts about small categories and their nerves. 
Let $\cC$ be a small category. Then we define the {\bf nerve} $|\cC|$ of $\cC$ to be the geometric realization
of the simplicial set defined as follows.  The $0$-simplicies are in bijection with the objects of $\cC$, while the $i$-simplicies 
for $i>0$ are in bijection with $i$-tuples of morphisms
\[
(f_1,\ldots,f_i)
\]
such that, for each $0 \leq j \leq i$, the codomain of $f_{j+1}$ agrees with the domain of $f_j$. For each $i > 0$ and $1 \leq j \leq i+1$ the face map $\partial_j$ is defined by
\[
\partial_j(f_1,\ldots,f_i) = \begin{cases} (f_2,\ldots,f_i) &\text{ if $j = 0$}\\ (f_1,\ldots,f_{i-1}) & \text{ if $j = i+1$}\\ (f_1,\ldots,f_{j-2},f_{j-1} \circ f_{j}, f_{j+1},\ldots ,f_i) \text{ otherwise.}\end{cases}
\]
The degeneracy map $\sigma_j$ is defined by
\[
\sigma_j(f_0,\ldots,f_i) = (f_0,\ldots, f_{j-1},\id,f_j,\ldots,f_i),
\]
where $\id$ is the identity map on the domain of $f_{j-1}$ (or the codomain of $f_j$). 

\begin{remark}\label{oppNerve}
We immediately see that there is a canonical homeomorphism $|\cC| \cong |\cC^\op|$.
A functor between two categories induces a map between their nerves, and an equivalence of categories
induces a homotopy equivalence between the nerves.
\end{remark}

Let $k$ be a commutative ring, and let 
$\uk\in\Rep_k(\C)$ be the module that takes every object to the 1-dimensional vector space $k$
and every morphism to the identity map.  The following standard result can be found, for example, in \cite[Theorem 5.3]{Webb}.

\begin{theorem}\label{nerve-ext}
There is a canonical graded $k$-algebra homomorphism $\Ext_{\Rep_k(\cC)}^*(\uk,\uk)\cong H^*(|\cC|; k)$.
\end{theorem}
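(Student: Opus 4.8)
The plan is to build a concrete projective resolution of $\uk$ in $\Rep_k(\cC)$ whose underlying chain complex computes the simplicial (co)homology of the nerve $|\cC|$, and then to identify the resulting $\Ext$-algebra structure with the cup product on $H^*(|\cC|;k)$. Recall from Section \ref{sec:small} that for each object $x$ of $\cC$ we have the principal projective $P_x\in\Rep_k(\cC)$ with $P_x(y)=k[\Mor_\cC(y,x)]$ (here we work with $\cC$-modules, i.e.\ functors out of $\cC^\op$, so $P_x$ is corepresented by $x$ in the appropriate sense; one can equally phrase everything for $\cC^\op$ using Remark \ref{oppNerve}). The key point is that $P_x$ is projective and that $\Hom_{\Rep_k(\cC)}(P_x,M)\cong M(x)$ by Yoneda.

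First I would write down the \emph{bar resolution} of $\uk$: set
\[
C_n \;=\; \bigoplus_{x_0\to x_1\to\cdots\to x_n} P_{x_0},
\]
where the sum is over composable $n$-tuples of morphisms (equivalently, $n$-simplices of $|\cC|$ together with the corepresenting object at the initial vertex), with differential given by the usual alternating sum of face maps: omitting $x_0$ uses the transition map $P_{x_1}\to P_{x_0}$ induced by $x_0\to x_1$, omitting an interior $x_j$ composes two arrows, and omitting $x_n$ just drops a simplex. Standard simplicial-homotopy arguments (the ``extra degeneracy'' contracting homotopy, exactly as in the classical bar construction for a group or monoid) show that $C_\bullet\to\uk$ is a resolution, and each $C_n$ is a direct sum of principal projectives, hence projective. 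Applying $\Hom_{\Rep_k(\cC)}(-,\uk)$ and using $\Hom(P_{x_0},\uk)=\uk(x_0)=k$ collapses $C_n$ to $\bigoplus_{x_0\to\cdots\to x_n}k$, which is precisely the group of $n$-cochains on the simplicial set defining $|\cC|$, with the simplicial coboundary. Therefore $\Ext^*_{\Rep_k(\cC)}(\uk,\uk)\cong H^*(|\cC|;k)$ as graded $k$-modules.

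To upgrade this to a $k$-algebra isomorphism, I would exhibit a chain-level multiplication on $C_\bullet$ realizing the Yoneda/composition product on $\Ext$ and matching the Alexander--Whitney diagonal on cochains. Concretely, the bar resolution carries a coproduct-like comparison map $C_\bullet\to C_\bullet\otimes_k C_\bullet$ (splitting a composable $(p+q)$-tuple into its first $p$ and last $q$ arrows — the Alexander--Whitney map), and one checks this is a chain map lifting the identity; dualizing gives the cup product on cochains, while on the $\Ext$ side the same map computes Yoneda composition because the bar resolution is functorial and the comparison map is essentially unique up to homotopy. The main obstacle is precisely this last identification: one must verify that the two a priori different products — Yoneda composition of extension classes versus the cup product induced by Alexander--Whitney — agree, including associativity and unitality up to the requisite homotopies. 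This is a standard but somewhat delicate acyclic-models / comparison-theorem argument, and it is the only place where real work beyond bookkeeping is required; everything else (exactness of the bar resolution, projectivity of its terms, the collapse under $\Hom(-,\uk)$) is routine. Since the statement is classical, I would simply cite \cite[Theorem 5.3]{Webb} for the final identification rather than reproduce the comparison argument in full.
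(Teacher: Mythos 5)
The paper does not actually prove this theorem; it simply states that it is standard and cites \cite[Theorem 5.3]{Webb}. Your proposal fills in the standard proof, and your approach --- the bar resolution by principal projectives, collapse under $\Hom(-,\uk)$ to the simplicial cochain complex of the nerve, and Alexander--Whitney for the product structure --- is exactly the classical argument that a reference like Webb's would give. So you are not taking a different route; you are supplying the argument the paper delegates to a citation.

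A few small points worth tightening, none of which threaten the conclusion. First, $\Hom$ out of a direct sum is a product, so $\Hom(C_n,\uk)$ is $\prod_{x_0\to\cdots\to x_n}k$ rather than a direct sum; since $\uk$ is constant this still is identified with the normalized or unnormalized simplicial $n$-cochains of the nerve, as you want. Second, your description of the face maps has the variance backwards for the convention you set up: with $P_x(y)=k[\Mor_\cC(y,x)]$, a morphism $x_0\to x_1$ induces $P_{x_0}\to P_{x_1}$, so if you index $C_n$ by $P_{x_0}$ then the $0$th face (dropping $x_0$) is the one that uses a transition map $P_{x_0}\to P_{x_1}$, and the $n$th face simply forgets $x_n$; alternatively, in the covariant convention one indexes by $P_{x_n}$ and the roles of the two extreme faces swap. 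Either convention works and gives the same cochain complex after applying $\Hom(-,\uk)$ because every transition map on $\uk$ is the identity, so this is only a bookkeeping slip. Finally, your assessment that the only genuinely nontrivial step is matching Yoneda composition with the Alexander--Whitney cup product (and that this is best cited) is accurate and is precisely why the paper, too, outsources the whole statement to \cite[Theorem 5.3]{Webb}.
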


\subsection{Outer category and the cohomology of \boldmath{$\Out(F_g)$}}
We begin with the following result, which relies heavily on Culler and Vogtman's work on outer space \cite{CV}.

\begin{theorem}\label{contract}
The nerves $|\cOgsm|$ and $|\cOgti|$ are contractible. 
\end{theorem}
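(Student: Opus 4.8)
The plan is to exhibit a zig-zag of homotopy equivalences connecting $|\cOgsm|$ (and $|\cOgti|$) to the spine of Culler–Vogtmann outer space $K_g$, which is known to be contractible by \cite{CV}. Recall that $K_g$ can be realized as the geometric realization of the poset of marked reduced graphs of genus $g$, where $(G,f)\leq (G',f')$ precisely when there is a forest collapse $G\to G'$ compatible with the markings; this is, up to the usual identification of a poset with its nerve, exactly the category $\cOgti$ as described in Proposition \ref{poset}. So the first and main step is to set up a careful dictionary: I would check that the objects of $\cOgti$ (chosen representatives of $\Aut(G_i)$-orbits of markings) are in natural bijection with the vertices of the spine, and that the partial order on $\cOgti$-objects from Proposition \ref{poset} matches the forest-collapse order defining the spine. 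Given that, $|\cOgti| \cong |K_g|$, which is contractible.

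Next I would handle $|\cOgsm|$. Since $\cOgti \subset \cOgsm$ is an equivalence of categories (as noted in Section \ref{sec:marked}), Remark \ref{oppNerve} gives that the inclusion induces a homotopy equivalence $|\cOgti| \simeq |\cOgsm|$; combined with the previous paragraph this shows $|\cOgsm|$ is contractible as well. In fact both claims follow at once from the single observation that $\cOgti$, $\cOgsm$, and the poset underlying the spine of outer space are all equivalent as categories, hence have homotopy equivalent nerves by Remark \ref{oppNerve}. For small $g$ one can also check directly: when $g=0$ every category in sight has a terminal object (the rose $R_0$) so the nerve is contractible; when $g=1$, as in Example \ref{outer-1-cat}, $|\cOgti|$ is a point and $|\cOgsm|$ is the nerve of a category with two objects and a single non-identity isomorphism between them, which is again contractible.

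The step I expect to be the main obstacle is the precise identification of $\cOgti$ with the poset defining the spine of outer space, in particular matching conventions: outer space is usually described with \emph{metric} marked graphs and a simplicial structure coming from edge lengths, and the spine is the geometric realization of the poset of \emph{simplices} (equivalently, of marked graphs with the forest-collapse order). One must verify that collapsing a subforest of a reduced graph of genus $g\geq 2$ again yields a reduced graph (true: collapsing a forest creates no bridges and merges vertices, and one checks valence-$2$ vertices do not arise from a genuine collapse since the graphs here have no separating edges), so that the poset really is the object set of $\cGgred$ with markings, i.e. $\cOgti$. After that, contractibility of the spine is quoted directly from \cite{CV}, and the rest is formal via Remark \ref{oppNerve}.
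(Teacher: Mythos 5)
Your proposal is correct and follows essentially the same route as the paper: use Proposition \ref{poset} to identify $\cOgti$ with the poset whose order complex is the spine of outer space, quote Culler--Vogtmann for contractibility of the spine, and then transfer to $\cOgsm$ via the categorical equivalence and Remark \ref{oppNerve}. The extra details you flag (the dictionary with the spine, the small-$g$ checks, the reducedness of forest collapses, which the paper handles in a remark in Section \ref{sec:reduced}) are all consistent with, and subsumed by, the paper's argument.
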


\begin{proof}
The categories $\cOgsm$ and $\cOgti$ are equivalent, therefore Remark \ref{oppNerve} tells us that it is sufficient to prove
that $|\cOgti|$ is contractible.  By Proposition \ref{poset}, $\cOgti$ is a poset category, which implies that $|\cOgti|$ is homeomorphic
to the order complex of the poset structure on the set of objects.  This order complex is called the {\bf spine of outer space},
and it is known to be contractible \cite[Corollary 6.1.2 ]{CV}.
\end{proof}

Recall that we have an action of the group $\Out(F_g)$ on the category $\cOgsm$, which induces an action on the nerve.
We also have a functor $\Phi:\cOgsm\to\cGgred^{\operatorname{small}}$ given by forgetting the marking, 
and this functor induces a map
$\Phi_*:|\cOgsm|\to |\cGgred^{\operatorname{small}}|$ of nerves.

\begin{proposition}\label{you've got a lot of nerve}
The action of $\Out(F_g)$ on $|\cOgsm|$ is free and proper, and $\Phi_*:|\cOgsm|\to |\cGgred^{\operatorname{small}}|$
is the quotient map.
\end{proposition}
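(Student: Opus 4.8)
The plan is to reduce everything to a statement about the simplicial set underlying the nerve. Write $N_\bullet\cC$ for the simplicial set whose geometric realization is $|\cC|$, so that $N_n\cC$ is the set of composable chains of $n$ morphisms in $\cC$ and $N_0\cC$ is the set of objects, and let $\Phi_\bullet\colon N_\bullet\cOgsm\to N_\bullet\cGgred^{\operatorname{small}}$ be the map of simplicial sets induced by the forgetful functor $\Phi$, whose geometric realization is $\Phi_\ast$. I would establish two things: (i) $\Out(F_g)$ acts freely on $N_n\cOgsm$ for every $n$; and (ii) $\Phi_\bullet$ descends to an isomorphism of simplicial sets $N_\bullet\cOgsm/\Out(F_g)\xrightarrow{\ \sim\ }N_\bullet\cGgred^{\operatorname{small}}$. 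Given these, the standard correspondence between free simplicial group actions and regular coverings shows that $|\cOgsm|\to|\cOgsm|/\Out(F_g)$ is a regular covering map, so $\Out(F_g)$ acts freely and properly discontinuously on $|\cOgsm|$; and since geometric realization commutes with colimits, hence with quotients by group actions, statement (ii) identifies $|\cGgred^{\operatorname{small}}|$ with $|\cOgsm|/\Out(F_g)$ in such a way that the quotient map becomes $\Phi_\ast$. That is exactly the proposition.

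For (i): an $n$-simplex of $N_\bullet\cOgsm$ is a chain of contractions of marked graphs, and in particular it records at least one object $(G_i,f)$. The action of $\alpha\in\Out(F_g)$ alters only the markings, leaving every underlying graph and every underlying contraction untouched, so if $\alpha$ fixes the simplex it fixes the marking $f$ of $G_i$. Since the set of markings of $G_i$ is a torsor for $\Out(F_g)$, this forces $\alpha=\id$; hence the action on $N_n\cOgsm$ is free for all $n$, the induced action on the cells of $|\cOgsm|$ is free, and the covering-space statement above applies.

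For (ii): it is enough to show that for each $n$ the map $\Phi_\bullet\colon N_n\cOgsm\to N_n\cGgred^{\operatorname{small}}$ is surjective with fibers equal to the $\Out(F_g)$-orbits. Fix a chain of contractions $G^{(n)}\xrightarrow{\varphi_n}G^{(n-1)}\to\cdots\to G^{(0)}$ of representative reduced graphs. A lift to $N_n\cOgsm$ is the same as a choice of markings $f^{(j)}$ of $G^{(j)}$ satisfying the compatibility condition $f^{(j-1)}=\varphi_j\circ f^{(j)}$ for every $j$. Thus $f^{(n-1)},\dots,f^{(0)}$ are determined by $f^{(n)}$, and conversely any marking $f^{(n)}$ of $G^{(n)}$ (these exist, since $G^{(n)}$ is connected of genus $g$) produces a lift; so the set of lifts of the given chain is non-canonically a torsor for $\Out(F_g)$, with $\Out(F_g)$ acting simply transitively by modifying $f^{(n)}$. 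In particular $\Phi_\bullet$ is surjective on $n$-simplices and each of its fibers is a single free $\Out(F_g)$-orbit, as required.

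The main obstacle is step (ii), and within it the claim that the marked lifts of a chain of contractions of the underlying graphs form a torsor for $\Out(F_g)$. Making this precise requires using that a marking is a homotopy class of homotopy equivalences $R_g\to G$ rather than a graph morphism, that every contraction is a homotopy equivalence, and that the compatibility condition in the definition of a morphism of $\cOg$ propagates a marking in exactly one direction along the chain, so that there is neither an obstruction to lifting nor any ambiguity beyond the single torsor parameter. The remaining ingredients — freeness on all simplices, and the passage from the simplicial quotient to the topological quotient via geometric realization — are routine.
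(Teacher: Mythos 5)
Your proof is correct and takes essentially the same route as the paper's: freeness on objects (0-simplices) together with the simplicial nature of the action gives freeness and properness, and transitivity of $\Out(F_g)$ on markings shows $\Phi_*$ is the quotient. The paper states these steps very tersely; you have filled in the details — in particular, you verify freeness on all $n$-simplices explicitly, you make precise the torsor structure on lifts of a chain of contractions via the observation that $f^{(n)}$ determines the whole lift, and you pass from the level-wise simplicial isomorphism to the topological statement by invoking that geometric realization preserves colimits. These elaborations are all sound and supply exactly the reasoning the paper leaves implicit.
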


\begin{proof}
The fact that the action is free and proper follows from the fact that it is free on the set of objects (which correspond to 0-simplices)
and each group element acts by a simplicial map.  To see that $\Phi_*$ is the quotient map, we need to show that
it is surjective and its fibers coincide with the orbits of $\Out(F_g)$.  This follows from the fact that $\Out(F_g)$
acts transitively on the set of markings of a reduced graph of genus $g$.
\end{proof}

\begin{corollary}\label{Kp1}
The nerve $|\cGgred^{\operatorname{small}}|$ is a classifying space for the group $\Out(F_g)$.
\end{corollary}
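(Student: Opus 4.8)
The plan is to deduce this directly from Theorem \ref{contract} and Proposition \ref{you've got a lot of nerve}, using the standard fact that a space with a free and proper action of a group $\Gamma$, whose total space is contractible, is a model for $B\Gamma$ (equivalently, $E\Gamma \to B\Gamma$). First I would recall that by Theorem \ref{contract}, the nerve $|\cOgsm|$ is contractible. By Proposition \ref{you've got a lot of nerve}, the group $\Out(F_g)$ acts freely and properly on $|\cOgsm|$, and the map $\Phi_*:|\cOgsm|\to|\cGgred^{\operatorname{small}}|$ induced by forgetting the marking is precisely the quotient map for this action.

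The next step is to package these two facts into the conclusion. Since $|\cOgsm|$ is a CW complex (being the geometric realization of a simplicial set) on which $\Out(F_g)$ acts freely, properly, and cellularly, the quotient map $|\cOgsm|\to |\cOgsm|/\Out(F_g)$ is a covering map. Combined with the contractibility of the total space, this exhibits $|\cOgsm|/\Out(F_g)$ as an aspherical space with fundamental group $\Out(F_g)$; that is, a $K(\Out(F_g),1)$, which is exactly what it means to be a classifying space. Identifying $|\cOgsm|/\Out(F_g)$ with $|\cGgred^{\operatorname{small}}|$ via Proposition \ref{you've got a lot of nerve} completes the argument.

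I expect the main (minor) obstacle to be purely a matter of hygiene rather than mathematics: one must make sure the action is genuinely by cellular homeomorphisms of the CW structure — which is where the phrase "each group element acts by a simplicial map" in the proof of Proposition \ref{you've got a lot of nerve} does its job — so that the quotient by a free proper action is a covering space in the honest topological sense. Granting that, the deduction is immediate from the lifting-criterion characterization of $E\Gamma$: a contractible space with a free, proper, cellular $\Gamma$-action has contractible-hence-aspherical quotient with $\pi_1 = \Gamma$. No further calculation is needed.
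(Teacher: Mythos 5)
Your argument is exactly the intended deduction: the paper states Corollary~\ref{Kp1} without a separate proof block because it follows immediately from Theorem~\ref{contract} and Proposition~\ref{you've got a lot of nerve} by the standard fact that the quotient of a contractible CW complex by a free, proper, cellular action of $\Gamma$ is a $K(\Gamma,1)$. Your write-up fills in that standard reasoning correctly, including the point about cellularity of the action.
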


\begin{example}\label{Sinfty}
Let us consider the very simple case where $g=1$, which we began discussing in Example \ref{outer-1-cat}.  
The category $\cO_1^{\operatorname{tiny}}$ has only one object (an oriented loop)
and no nontrivial morphisms, so its nerve is a point.  The category $\cO_1^{\operatorname{small}}$ has two objects, namely a loop
with two different orientations, and these two objects are uniquely isomorphic.  The nerve of $\cO_1^{\operatorname{small}}$ is an 
infinite-dimensional sphere $S^\infty$, and the group $\Out(F_1)\cong S_2$ acts via the antipodal map with quotient $\R P^\infty$.
The category $\cG_{1,\operatorname{red}}^{\operatorname{small}}$ has a single object with automorphism group $S_2$,
so its nerve is homeomorphic to $\R P^\infty$, which is a classifying space for $S_2$.
\end{example}

\begin{corollary}\label{small calc}
For any commutative ring $k$, we have $\Ext_{\Rep_k(\cGgred^\op)}^*(\uk,\uk)\cong H^*(\Out(F_g); k)$.
\end{corollary}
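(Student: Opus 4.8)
The plan is to combine the two main ingredients already assembled in this section: the identification of the nerve $|\cGgred^{\operatorname{small}}|$ as a classifying space for $\Out(F_g)$ (Corollary \ref{Kp1}), and the general comparison between Ext groups in a functor category and the cohomology of the nerve (Theorem \ref{nerve-ext}). First I would apply Theorem \ref{nerve-ext} to the small category $\C = \cGgred^{\operatorname{small}}$, which yields a canonical graded $k$-algebra isomorphism
\[
\Ext_{\Rep_k(\cGgred^\op)}^*(\uk,\uk) \;\cong\; H^*\big(|\cGgred^{\operatorname{small}}|; k\big).
\]
Here I must be slightly careful about which category the Ext is taken over: Theorem \ref{nerve-ext} is phrased for $\Rep_k(\C)$ and relates it to $H^*(|\C|;k)$, and since $|\C|\cong|\C^\op|$ by Remark \ref{oppNerve}, it is harmless to pass between $\cGgred^{\operatorname{small}}$ and its opposite on the topological side; on the algebraic side the statement is about $\Rep_k(\cGgred^\op)$, matching the notation in the corollary. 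Since $\cGgred^{\operatorname{small}}$ is equivalent to $\cGgred$, the functor categories are equivalent and the Ext groups agree, so the choice of the small model is immaterial.

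Next I would invoke Corollary \ref{Kp1}, which says $|\cGgred^{\operatorname{small}}|$ is a classifying space for $\Out(F_g)$, i.e.\ a $K(\Out(F_g),1)$. By the defining property of classifying spaces, $H^*\big(|\cGgred^{\operatorname{small}}|; k\big)\cong H^*(\Out(F_g); k)$ as graded $k$-algebras. Composing the two isomorphisms gives the desired
\[
\Ext_{\Rep_k(\cGgred^\op)}^*(\uk,\uk)\;\cong\; H^*(\Out(F_g); k).
\]
Both isomorphisms are canonical (the first from Theorem \ref{nerve-ext}, the second from the universal property once a specific homotopy equivalence to $B\Out(F_g)$ is fixed, which is exactly what Corollary \ref{Kp1} provides via Proposition \ref{you've got a lot of nerve}), so the composite is a canonical graded $k$-algebra isomorphism.

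There is essentially no obstacle here, since all the substantive work — the contractibility of the spine of outer space (Theorem \ref{contract}), the freeness and properness of the $\Out(F_g)$-action and the identification of $\Phi_*$ as the quotient map (Proposition \ref{you've got a lot of nerve}), and hence Corollary \ref{Kp1} — has already been carried out, and Theorem \ref{nerve-ext} is quoted from the literature. The only point requiring a line of care is bookkeeping about $\op$ versus non-$\op$ categories and the equivalence $\cGgred^{\operatorname{small}}\simeq\cGgred$, which I would dispatch using Remark \ref{oppNerve} and the invariance of functor categories under equivalence of the source. So the corollary follows immediately by concatenating Theorem \ref{nerve-ext} with Corollary \ref{Kp1}.
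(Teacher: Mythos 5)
Your proposal is correct and follows essentially the same route as the paper: replace $\cGgred$ with the equivalent small category, then combine Remark \ref{oppNerve}, Theorem \ref{nerve-ext}, and Corollary \ref{Kp1}. The paper's proof is just a more terse version of exactly this argument.
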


\begin{proof}
To compute $\Ext_{\Rep_k(\cGgred^\op)}^*(\uk,\uk)$, we may replace $\cGgred$ with the equivalent category
$\cGgred^{\operatorname{small}}$.  The result then follows from Remark \ref{oppNerve}, 
Theorem \ref{nerve-ext}, and Corollary \ref{Kp1}.
\end{proof}

\begin{proof}[Proof of Theorem \ref{ext}]
Given a pair of modules $M\in\Rep_k(\cGgop)$ and $N\in\Rep_k(\cGgred^\op)$, we will write $\overline{M}$
to denote the restriction of $M$ to $\Rep_k(\cGgred^\op)$ and $N^!$ to denote the extension of $N$ by zero to $\Rep_k(\cGgop)$.
The functors $M\mapsto\overline{M}$ and $N\mapsto N^!$ are exact and the former is left adjoint to the latter,
therefore $\Ext^*_{\Rep_k(\cGgred^\op)}(\overline{M}, N)\cong \Ext^*_{\Rep_k(\cGgop)}(M,N^!)$.
If we apply this fact with $M = \uk^!$ and $N = \uk$,
we see that Theorem \ref{ext} is equivalent to Corollary \ref{small calc}.
\end{proof}

\subsection{A sample calculation}\label{sec:sample}
We now use Corollary \ref{small calc} to compute the first cohomology of $\Out(F_2)\cong \operatorname{GL}(2; \Z)$
with coefficients in an arbitrary field $k$.  
In particular, we illustrate the extent to which
the representation theory of finite groups (namely automorphism groups of graphs) can be used to aid our calculations.

As in Section \ref{sec:reduced}, there are exactly two reduced graphs of genus 2 up to isomorphism, namely
the rose $\infty$ and the melon $\banana$.  The automorphism group of the rose is $D_4$, while the automorphism
group of the melon is $S_3\times S_2$.
Let $\varphi_1$, $\varphi_2$, and $\varphi_3$ be the three contractions from the melon to the rose obtained by cyclically
permuting the edges and then contracting the middle one.
Up to post-composition by an automorphism of the rose, every contraction is of this form.

Let $P_{\infty} \in \Rep_k((\cG_{2,\operatorname{red}}^{\operatorname{small}})^\op)$ be the principal projective module corresponding to the rose, 
and consider the surjection $P_{\infty}\to\uk$ that sends every basis element to 1.
Let $K$ be the kernel of this homomorphism.  Applying the functor $\Hom(-,\uk)$ gives us the long exact sequence
$$0\to \Hom(\uk,\uk)\to \Hom(P_{\infty},\uk)\to \Hom(K,\uk)\to \Ext^1(\uk,\uk)\to \Ext^1(P_\infty,\uk).$$
An element of $\Hom(P_{\infty},\uk)$ is determined by its value on the identity morphism of $\infty$, which implies that
the first map $ \Hom(\uk,\uk)\to \Hom(P_{\infty},\uk)$ is an isomorphism.  The fact that $P_\infty$ is projective
implies that $\Ext^1(P_\infty,\uk) = 0$, thus $\Hom(K,\uk)\to \Ext^1(\uk,\uk)$ must also be an isomorphism.
We therefore want to compute $\Hom(K,\uk)$.

An element of $\Hom(K,\uk)$ is a pair\footnote{Here
we are using the symbol $k$ to denote the 1-dimensional trivial representations of both $D_4$ and $S_3\times S_2$.}
$$(f,g)\in \Hom_{D_4}(K(\infty),k)\times \Hom_{S_3\times S_2}(K(\banana),k)$$
satisfying the condition that, if we pre-compose $g$ with any of the three
inclusions $K(\infty)\to K(\banana)$ induced by $\varphi_1$, $\varphi_2$, and $\varphi_3$, we obtain $f$.

Let's start by computing $\Hom_{S_3\times S_2}(K(\banana),k)$ and $\Hom_{D_4}(K(\infty),k)$.  The group $S_3\times S_2$ acts freely on the set of contractions
from the melon to the rose with two orbits, which we will call the {\bf untwisted contractions} and the {\bf twisted contractions}.
The untwisted contractions consist of the orbit that includes the three maps $\varphi_i$, and the twisted contractions consist of untwisted
contractions followed by an automorphism of the rose that fixes one of the two loops and reverses the orientation of the other loop.
We therefore have $P_\infty(\banana)\cong k[S_3\times S_2] \oplus k[S_3\times S_2]$ as representations of $S_3\times S_2$.
The space of homomorphisms from $P_\infty(\banana)$ to $k$ is 2-dimensional, with a basis given by the homomorphisms
that take the sum of the coefficients of the twisted or untwisted maps.
Applying $\Hom_{S_3\times S_2}(-,k)$ to 
the short exact sequence $0\to K(\banana)\to P_\infty(\banana)\to k\to 0$ and noting that $P_\infty(\banana)$ is a projective representation
of $S_3\times S_2$, we obtain the long exact sequence
$$0\to \Hom_{S_3\times S_2}(k,k)\to \Hom_{S_3\times S_2}(P_{\infty}(\banana),k)\to \Hom_{S_3\times S_2}(K(\banana),k)\to \Ext^1_{S_3\times S_2}(k,k)\to 0.$$
Since the abelianization of $S_3\times S_2$ is $S_2\times S_2$, we have $\dim \Ext^1_{S_3\times S_2}(k,k) = 2$ if $k$ has characteristic 2 and 0 otherwise.
Hence $\dim \Hom_{S_3\times S_2}(K(\banana),k) = 3$ if $k$ has characteristic 2 and 1 otherwise.  A similar argument for the rose tells us
that $\dim \Hom_{D_4}(K(\infty),k) = 2$ if $k$ has characteristic 2 and 0 otherwise.

Let's find explicit bases for our Hom spaces.  Let $h_1:K(\banana)\to k$ be the homomorphism that
adds the coefficients of the untwisted maps in $K(\banana)\subset P_\infty(\banana)$.  This homomorphism is well defined and nonzero for any field $k$.
Let $h_2:K(\banana)\to k$ be the homomorphism that adds the coefficients of $C_3\times S_2\subset S_3\times S_2$ for both the twisted
and untwisted maps and let 
$h_3:K(\banana)\to k$ be the homomorphism that adds the coefficients of $S_3\times \{\id\}\subset S_3\times S_2$ for both the twisted
and untwisted maps.  Each of these homomorphisms is well defined if and only if the characteristic of $k$ is 2, in which case it is straightforward to check
that $\{h_1,h_2,h_3\}$ is a basis for $\Hom_{S_3\times S_2}(K(\banana),k)$.
Let $f_1:K(\infty)\to k$ add the coefficients of the untwisted automorphisms of the rose (those generated by horizontal and vertical reflections),
and let $f_2:K(\infty)\to k$ add the coefficients of the automorphisms that keep the left loop on the left and the right loop on the right.
Each of these homomorphisms is well defined if and only if the characteristic of $k$ is 2, in which case it is straightforward to check
that $\{f_1,f_2\}$ is a basis for $\Hom_{S_3\times S_2}(K(\banana),k)$.

Finally, we observe that $h_1$ restricts to $f_1$ and $h_2$ restricts to $f_2$ under all three inclusions of $K(\infty)$ into $K(\banana)$.
On the other hand, the restriction of $h_3$ to $K(\infty)$ fails to be $D_4$-equivariant {\em and} depends on the choice of inclusion of $K(\infty)$ into $K(\banana)$.
We therefore conclude that
$$\dim H^1(\Out(F_2); k) = \begin{cases} 2 & \text{if $\operatorname{char}(k) =2$}\\ 0 & \text{otherwise}.\end{cases}$$

\begin{remark}\label{direct}
This result can also be obtained by working directly with a presentation for $\Out(F_2)$, such as the one in \cite[Section 2.1]{Vogt}.
This presentation can be used to compute the abelianization, and $H^1(\Out(F_2); k)$ is isomorphic to the vector space of group homomorphisms
from the abelianization to $k$.
\end{remark}

\bibliography{./symplectic}
\bibliographystyle{amsalpha}

\end{document}